\newcommand{\EE}{\mathbb{E}}
\newcommand{\HH}{\mathbb{H}}
\newcommand{\NN}{\mathbb{N}}
\newcommand{\RR}{\mathbb{R}}
\newcommand{\CC}{\mathbb{C}}
\newcommand{\PP}{\mathbb{P}}
\newcommand{\TT}{\mathbb{T}}
\newcommand{\ZZ}{\mathbb{Z}}
\newcommand{\norm}[1]{\lVert#1\rVert}
\newcommand{\abs}[1]{\lvert#1\rvert}
\newtheorem{theorem}{Theorem}[section]
\newtheorem{corollary}[theorem]{Corollary}
\newtheorem{lemma}[theorem]{Lemma}
\newtheorem{proposition}[theorem]{Proposition}
\newcommand{\comment}[1]{}
\theoremstyle{definition}
\newtheorem{remark}[theorem]{Remark}
\numberwithin{equation}{section}
\numberwithin{equation}{section}
      \def\@setcopyright{}
      \def\serieslogo@{}
\def\I{{I}}
\def\c{\mathcal{C}}
\def\R{\mathbb R}
\def\Q{\mathbb Q}
\def\C{\mathbb C}
\def\Z{\mathbb Z}
\def\N{\mathbb N}
\def\T{\mathbb T}
\def\dist{\text{dist}}
\def\Id{\text{Id}}
\def\e{\epsilon}
\def\bv{\mathbf v}
\def\a{\alpha}
\def\b{\beta}
\def\hk{K}
\def\p{\phi}
\def\P{\Phi}
\def\q{\eta}
\def\A{\mathcal{A}}
\def\B{\mathcal{B}}
\def\F{\mathcal{F}}
\def\H{\mathcal{H}}
\def\HH{\EuScript{H}}
\def\E{{E}}
\def\hE{{\hat E}}
\def\calU{\mathcal{U}}
\def\EE{\mathcal{E}}
\def\I{{I}}
\def\M{\mathcal{M}}
\def\P{{\Phi}}
\def\PP{\mathcal{P}}
\def\U{{\tilde V}}
\def\UU{{\tilde{\mathcal{V}}}}
\def\V{{V}}
\def\VV{\mathcal{V}}
\def\vv{{\mathcal{V}}}
\def\L{\mathcal{L}}
\def\loc{\text{loc}}
\renewcommand*\env@matrix[1][*\c@MaxMatrixCols c]{%
  \hskip -\arraycolsep
  \let\@ifnextchar\new@ifnextchar
  \array{#1}}
\begin{document}
\title[Local rigidity]
{Smooth local rigidity for hyperbolic toral automorphisms}

\author[Boris Kalinin$^1$ \and Victoria Sadovskaya$^2$ \and Zhenqi Jenny Wang$^3$ ]{Boris Kalinin$^1$ \and Victoria Sadovskaya$^2$ \and Zhenqi Jenny Wang$^3$ }


\address{Department of Mathematics, The Pennsylvania State University, University Park, PA 16802, USA.}
\email{kalinin@psu.edu, sadovskaya@psu.edu}

\address{Department of Mathematics\\
        Michigan State University\\
        East Lansing, MI 48824,USA}
\email{wangzq@math.msu.edu}

\thanks{{\em Key words:} Hyperbolic toral automorphism, conjugacy, local rigidity, linear cocycle, iterative method.}

\thanks{$^1$  Supported in part by Simons Foundation grants 426243 and 855238}
\thanks{$^2$ Supported in part by NSF grant DMS-1764216}
\thanks{$^3$ Supported in part by NSF grant DMS-1845416}

\begin{abstract}
We study the regularity of a conjugacy $H$ between a hyperbolic toral automorphism $A$ and its smooth perturbation $f$. We show that if $H$ is weakly differentiable then it is $C^{1+\text{H\"older}}$ and, if $A$ is also weakly irreducible, then $H$ is $C^\infty$. As a part of the proof, we establish results of independent interest on H\"older continuity of a measurable conjugacy between linear cocycles over a hyperbolic system. As a corollary, we improve regularity of the conjugacy to $C^\infty$ in prior local rigidity results.

\end{abstract}

\maketitle

\section{Introduction and local rigidity results}

Hyperbolic automorphisms of tori are the prime examples of hyperbolic dynamical systems.
The action of a matrix $A \in SL(N,\Z)$ on $\R^N$ induces an automorphism of the torus
$\T^N=\R^N/\Z^N$, which we denote by the same letter. An automorphism $A$ is called {\em hyperbolic},
or {\em Anosov}, if the matrix has no eigenvalues on the unit circle. One of the key properties of hyperbolic systems is {\it structural stability}: any diffeomorphism $f$ of $\T^N$ sufficiently $C^1$ close to such an
$A$ is also hyperbolic and is topologically conjugate to $A$ \cite{A}, that is, there exists a ho\-meo\-morphism $H$ of $\T^N$, called a {\em conjugacy}, such that
\begin{equation} \label{IntrConj}
A\circ H= H \circ f.
\end{equation}
Any two conjugacies differ by an affine automorphisms of $\T^N$ commuting with $A$ \cite{W},
and hence have the same regularity. Although $H$ is always bi-H\"older continuous, it is usually not even $C^1$, as there are various obstructions to smoothness.
This is in sharp contrast with rigidity for actions of larger groups, where often any perturbation, or even
any smooth action, is $C^\infty$ conjugate to an algebraic model.


In the classical case of a single system, the problem of establishing smoothness of the conjugacy from some weaker assumptions
has been extensively studied. It is often described as rigidity, in the sense that
weak equivalence implies strong equivalence.

In dimension two, definitive results were obtained in \cite{L0,LM, L1}.
For hyperbolic automorphisms of $\T^2$, and more generally for Anosov diffeomorphisms  of $\T^2$, $C^\infty$ smoothness of the conjugacy was obtained from absolute continuity of $H$ and from equality of Lyapunov exponents of $A$ and $f$ at the periodic points.

The case of higher dimensional systems is much more complicated. In particular, the problem of the exact level of regularity of $H$ is subtle: for any $k \in \N$ and any $N\ge 4$ there exists a reducible hyperbolic automorphism $A$ of $\T^N$ and its analytic perturbation $f$ such that the conjugacy  is  $C^k$ but is not $C^{k+1}$ \cite{L1}. We recall that $A$ is {\em reducible}\, if it has a nontrivial rational invariant subspace or, equivalently, if its characteristic polynomial is reducible over $\Q$.

The two-dimensional results were extended in two directions. First, $C^\infty$ conjugacy
was obtained for systems that are conformal on full stable and unstable subspaces under various periodic data assumptions which ensured that the perturbed system is also conformal \cite{L2,KS03,L3,KS09}.
Second, for some classes of irreducible $A$, equality of Lyapunov exponents or similarity of the periodic data were shown to imply $C^{1+\text{H\"older}}$ smoothness of $H$ \cite{GG,G,GKS11,SY,GKS20,dW}.
Irreducibility of $A$ is necessary for these results \cite{L1,L2,G}.
Low smoothness of $H$ is due to the method of the proof, which establishes regularity
of $H$ along natural one or two-dimensional $f$-invariant foliations of $\T^N$, whose leaves are typically only $C^{1+\text{H\"older}}$ smooth.  Nevertheless, Gogolev conjectured in \cite{G} that the regularity of $H$
should be close to that of $f$, and in particular if  $f$ is $C^\infty$ then so is $H$.
Until now, the only progress on higher regularity of $H$, outside of the conformal setting, was obtained for automorphisms of $\T^3$  with real spectrum in \cite{G17}. We refer to \cite{KSW22} for a more detailed account of questions and developments in the area of local rigidity.
\vskip.1cm

In this paper we establish general results on bootstrap of regularity of the conjugacy.
We show that for {\it any}\, hyperbolic automorphism $A$, if $H$ is weakly differentiable in a certain sense
then it is $C^{1+\text{H\"older}}$ and, if in addition $A$ is {\it weakly irreducible}, then $H$ is $C^\infty$.
We introduce and discuss the weak irreducibility property, which is  weaker than irreducibility and holds for some $A$ with Jordan blocks.
Our methods are different from those in the previous local rigidity results. In particular, we prove smoothness of $H$ without establishing it first along invariant foliations. For the $C^\infty$ smoothness of the conjugacy, we use a KAM type iterative scheme. This approach is novel  in the setting of hyperbolic systems, and it is substantially different from previous applications of KAM, as we discuss below. As a corollary, we improve the regularity of $H$ from $C^{1+\text{H\"older}}$ to $C^\infty$ in the previous local rigidity results for the irreducible case.
\vskip.1cm

Now we formulate our main results. We denote by $W^{1,q}(\T^N)$ the Sobolev space of $L^q$
functions with $L^q$ weak partial derivatives of first order.  We note that Lipschitz functions are in
$W^{1,\infty}(\T^N)$.

The first result holds for an
arbitrary hyperbolic automorphism without any irreducibility assumption.
We recall that while $H$ satisfying \eqref{IntrConj} is not unique,  there is a unique {\em conjugacy $C^0$ close to the identity}. This is $H$ in the homotopy class
of the identity with $H(p)=0$, where $p$ is the fixed point of $f$ closest to $0$.

\begin{theorem} \label{HolderConjugacy}
Let $A$ be a hyperbolic automorphism of $\T^N$ and let $f$ be a $C^{1+\text{H\"older}}$ diffeomorphism of $\T^N$ which is $C^1$ close to $A$. Suppose that for some conjugacy $H$
between $f$ and $A$, either $H$ or $H^{-1}$ is in  $W^{1,q}(\T^N)$ with $q>N$.
Then $H$ is a $C^{1+\text{H\"older}}$ diffeomorphism.
\vskip.1cm

More precisely, there is a constant $\beta_0=\beta_0(A)$, $0<\beta_0\le1$, so that for any
 $0<\beta' <\beta_0$ there exist constants $\delta>0$ and $K>0$ such that for any $0<\beta \le \beta'$
the following holds.

For any ${C^{1+\beta}}$ diffeomorphism $f$ with $\|{f-A}\|_{C^{1}}<\delta$, if some conjugacy
between $A$ and $f$, or its inverse, is in $W^{1,q}(\T^N)$, $q>N$, then any conjugacy  is  a
$C^{1+\beta}$ diffeomorphism. Moreover, for the conjugacy $H$ that is $C^0$ close to the identity,
\begin{equation} \label{C1H est}
\|{H-I}\|_{C^{1+\beta}}\leq K\|{f-A}\|_{C^{1+\beta}}.
\end{equation}
\end{theorem}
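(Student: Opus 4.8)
The plan is to interpret the weak derivative of a conjugacy as a \emph{measurable conjugacy} between two linear cocycles over the Anosov diffeomorphism $f$ --- the derivative cocycle $x\mapsto Df(x)$ and the constant cocycle $A$ --- and then to invoke the Hölder continuity of measurable cocycle conjugacies established earlier in the paper. Assume first that $H\in W^{1,q}(\T^N)$ with $q>N$; it suffices to treat the conjugacy $H$ that is $C^0$ close to the identity, so $H$ is a homeomorphism homotopic to the identity. By Morrey's embedding $H\in C^{1-N/q}$ and $H$ satisfies Luzin's condition (N); its weak derivative $DH\in L^q$ agrees a.e. with the classical derivative; and, by the area formula together with the injectivity of $H$, the $f$-invariant measure $\nu:=(H^{-1})_*\mathrm{Leb}$ satisfies $d\nu=|\det DH|\,d\mathrm{Leb}$, so that $\nu\ll\mathrm{Leb}$ and $\det DH\neq 0$ for $\nu$-a.e. point. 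Finally, since $H$ is a topological conjugacy and $\mathrm{Leb}$ is the unique measure of maximal entropy of the automorphism $A$, the measure $\nu$ is the unique measure of maximal entropy of $f$, hence $f$-ergodic with full support.

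Next I would differentiate $A\circ H=H\circ f$, using that $f$ is a bi-Lipschitz $C^{1+\text{H\"older}}$ diffeomorphism so that the chain rule for Sobolev functions applies, to obtain
\[
A\cdot DH(x)=DH(f(x))\cdot Df(x)\qquad\text{for a.e. }x ,
\]
i.e. $DH$ is a measurable conjugacy from the Hölder continuous $\GL(N,\R)$-cocycle $Df$ to the constant cocycle $A$ over $(\T^N,f)$, and it is a genuine $\GL(N,\R)$-valued conjugacy on a set of full $\nu$-measure. Because $Df$ is $C^0$ close to $A$, these cocycles fall within the scope of the cocycle rigidity theorem once the Hölder exponent lies below the threshold $\beta_0(A)$ determined by the spectrum of $A$; that theorem then produces a Hölder continuous $g\colon\T^N\to\GL(N,\R)$ with $A\,g(x)=g(f(x))\,Df(x)$ for every $x$ and $g=DH$ for $\nu$-a.e. $x$, together with a bound on $\|g\|_{C^\beta}$ in terms of $\|Df-A\|$, hence of $\|f-A\|_{C^{1+\beta}}$.

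The crux, which I expect to be the main obstacle, is to pass from $g$ back to $H$: the rigidity theorem identifies $DH$ with the continuous function $g$ only on a set of full $\nu$-measure, and $\nu$ need not be equivalent to Lebesgue, so a priori $DH$ could degenerate on an $f$-invariant, Lebesgue-positive, $\nu$-null set. To exclude this I would use that $g$ is continuous and coincides with $DH$ on a dense (in fact $\nu$-full) set of differentiability points of $H$, that $g$ is realized through the stable and unstable holonomies of the cocycles over the compact base and is therefore uniformly bounded together with $g^{-1}$, that $H$ is absolutely continuous on almost every line, and the dynamical relation $H\circ f^{\,n}=A^{\,n}\circ H$ together with the fine dominated splitting of $Df$ inherited from $A$; these combine to force $H$ to be differentiable everywhere with $DH=g$. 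Consequently $H\in C^1$ with Hölder continuous derivative $g$ valued in $\GL(N,\R)$, so $H$ is a $C^{1+\beta}$ diffeomorphism. If instead $H^{-1}\in W^{1,q}$, the same scheme applies to the cocycle $Df\circ H^{-1}$ over the algebraic system $(\T^N,A)$, whose invariant measure $\mathrm{Leb}$ is smooth, yielding $H^{-1}\in C^{1+\beta}$ and then $H\in C^{1+\beta}$.

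For the estimate \eqref{C1H est}, once $H=I+h$ is known to be $C^{1+\beta}$ the conjugacy equation reads $A h-h\circ f=f-A$; replacing $h\circ f$ by $h\circ A$ produces an error of higher order in $\|f-A\|_{C^{1+\beta}}$ and $\|h\|_{C^{1+\beta}}$, and the linearized twisted cohomology equation $A h-h\circ A=\psi$ is solvable with $\|h\|_{C^{1+\beta}}\lesssim\|\psi\|_{C^{1+\beta}}$ on the appropriate homotopy class, this being the classical cohomology estimate for hyperbolic toral automorphisms which is valid precisely for $\beta<\beta_0(A)$ (the hyperbolic rates of $A$ then dominate the loss coming from the derivative). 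A contraction argument in $C^{1+\beta}$ then gives $\|h\|_{C^{1+\beta}}\le K\,\|f-A\|_{C^{1+\beta}}$.
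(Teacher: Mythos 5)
Your first half follows the paper's own route: differentiate $H\circ f=A\circ H$, read $DH$ as a measurable conjugacy between the cocycle $Df$ and the constant cocycle $A$, and apply the cocycle theorem (Theorem \ref{constant cocycle}) with the measure of maximal entropy $\nu=(H^{-1})_*\mathrm{Leb}$; your use of the area formula in place of the Morse--Sard property to see that $DH$ is invertible $\nu$-a.e.\ is a legitimate variant. But the step you yourself call the crux is exactly where the argument stops being a proof. Knowing $DH=g$ only $\nu$-a.e.\ --- which, by your own identity $d\nu=|\det DH|\,d\mathrm{Leb}$, means only Lebesgue-a.e.\ on the set $\{\det DH\neq0\}$ --- does not yield $H\in C^1$: the ingredients you list (density of the coincidence set, boundedness of $g$ and $g^{-1}$, absolute continuity on lines, the dominated splitting) do not exclude an $f$-invariant set of positive Lebesgue measure on which $DH$ is singular or differs from $g$. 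The paper closes precisely this gap by proving $\nu\sim\mathrm{Leb}$: Lemma \ref{equal exp} gives that the Lyapunov exponents of $Df$ with respect to $\nu$ equal those of $A$, so the entropy of $\nu$ equals the sum of its positive exponents; equality in the Pesin--Ruelle inequality gives absolutely continuous conditionals on unstable leaves (Ledrappier), the same for $f^{-1}$ on stable leaves, hence $\nu\ll\mathrm{Leb}$, and then the density, as a measurable solution of the coboundary equation $\sigma(fx)\sigma(x)^{-1}=\det Df(x)$, coincides with a positive continuous function, so $\nu$ is equivalent to Lebesgue. Only then does ``$DH=g$ $\nu$-a.e.'' become a Lebesgue-a.e.\ statement, and $H\in W^{1,q}$ with a continuous version of its weak derivative gives $H\in C^{1+\beta}$. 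Your area formula gives the easy inclusion $\nu\ll\mathrm{Leb}$; the real content is the positivity of $|\det DH|$ Lebesgue-a.e.\ (equivalently $\mathrm{Leb}\ll\nu$), and some form of the exponents/Liv\v{s}ic-density argument is still required.

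The route you propose for the estimate \eqref{C1H est} also does not work. The solution operator of the twisted equation $Ah-h\circ A=\psi$ is not bounded on $C^{1+\beta}$, nor even on $C^{1}$, for a general hyperbolic $A$: if the unstable spectrum contains two moduli $\rho_i<\rho_j$, differentiating the series $h_u=\sum_{m\ge0}A_u^{-(m+1)}(\psi_u\circ A^m)$ in the strongly expanded direction produces terms of size $(\rho_j/\rho_i)^m$, and the solution is typically only H\"older of exponent $\log\rho_i/\log\rho_j$ even for analytic $\psi$ --- this is exactly why reducible examples with non-$C^1$ conjugacies exist and why Section \ref{linearized} must handle obstructions to smooth solvability; taking $\beta$ small does not repair a divergence that already occurs at the level of first derivatives. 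The paper never solves the linearized equation at this stage: it combines the bound $\hk_\beta(DH)\le k\,\|DH\|_{C^0}\,\|f-A\|_{C^{1+\beta}}$ supplied by Theorem \ref{constant cocycle}, the elementary bound $\|H-I\|_{C^0}\le k\,\|f-A\|_{C^0}$ obtained from a contraction argument in $C^0$ only (where hyperbolicity of the twist causes no loss, see \eqref{H_*}--\eqref{h norm}), and the interpolation Lemma \ref{Int}, to bootstrap $\|H-I\|_{C^{1+\beta}}\le K\|f-A\|_{C^{1+\beta}}$. You would need to replace your contraction scheme in $C^{1+\beta}$ by an argument of this kind.
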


\begin{remark}
The assumption of being in $W^{1,q}$ with $q>N$ in this and in the next theorem can be replaced
with a slightly weaker one that we actually need for the proof:  either $H^{-1}$
is in $W^{1,1}$ and its Jacoby matrix of partial derivatives is invertible and gives the differential
of $H^{-1}$ for Lebesgue almost every point of $\T^N$, or the same holds for $H$ and $f$ preserves
an absolutely continuous probability measure.
\end{remark}


In the next theorem we obtain $C^\infty$ smoothness of the conjugacy assuming that $A$ is weakly irreducible, which we define as follows. Let $\R^N=\oplus_{\rho_i} E^i$ be the splitting where $E^i$ is the sum of generalized eigenspaces of $A$ corresponding to the eigenvalues of modulus $\rho_i$, and let $\hat E^i=\oplus_{\rho_j\neq \rho_i} E^j.$
 We say that $A$ is \emph{weakly irreducible} if each $\hat E^i$ contains no nonzero elements of $\Z^N$.
This condition is  weaker than irreducibility, see Section \ref{Weak irred} for details.

\begin{theorem}\label{th:4} Let $A $ be a weakly irreducible hyperbolic automorphism of $\T^N$.
Then there is a constant $\ell=\ell(A)\in\NN$ so that for any $C^\infty$ diffeomorphism $f$   which is $C^{\ell}$ close to $A$ the following holds. If for some conjugacy $H$ between $f$ and $A$ either $H$ or $H^{-1}$ is in the Sobolev space $W^{1,q}(\T^N)$ with $q>N$, then any conjugacy between $f$ and $A$ is a $C^\infty$ diffeomorphism. \end{theorem}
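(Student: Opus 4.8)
The plan is to deduce Theorem~\ref{th:4} from Theorem~\ref{HolderConjugacy} by a bootstrap argument. With $\ell=\ell(A)$ chosen large enough that $f$ is $C^{1+\beta}$-close to $A$, Theorem~\ref{HolderConjugacy} applies and shows that the conjugacy $H$ which is $C^0$ close to the identity is a $C^{1+\beta}$ diffeomorphism for some $\beta=\beta(A)>0$, with $\|H-I\|_{C^{1+\beta}}\le K\|f-A\|_{C^{1+\beta}}$; in particular $h:=H-I$ is small in $C^{1+\beta}$. Since any two conjugacies differ by an affine automorphism commuting with $A$, we may assume $H$ is this canonical conjugacy (it again has $H$ or $H^{-1}$ in $W^{1,q}$, that hypothesis being used in full already through Theorem~\ref{HolderConjugacy}), and it suffices to upgrade it from $C^{1+\beta}$ to $C^\infty$. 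For that I would run a KAM / Nash--Moser iteration whose limit is a $C^\infty$ conjugacy $C^0$ close to the identity, which by uniqueness must then coincide with $H$.

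Write $f=A+\varphi$ with $\varphi\in C^\infty$ periodic and small. The scheme produces smooth data $(f_n,H_n)$ with $A\circ H_n=H_n\circ f_n$ exactly, starting from $H_0=I+S_{t_0}h$ (a $C^\infty$ diffeomorphism close to $H$ in $C^{1+\beta}$) and $f_0=H_0^{-1}\circ A\circ H_0$ (close to $f$ in a fixed finite norm). At step $n$ one writes $f=f_n\circ(I+\delta_n)$ with $\delta_n$ small, sets $H_{n+1}=(I+w_n)\circ H_n$, and chooses $w_n$ so as to kill, to first order, the residual, which leads to a twisted cohomological equation
\[ A\,w-w\circ f_n=g_n, \]
equivalently, after conjugating by $H_n$ to pass to the linear model, $w\circ A-A\,w=\tilde g_n$, where $g_n$ and $\tilde g_n$ are built from $\delta_n$, $DH_n$ and its inverse. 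Interposing smoothing operators $S_{t_n}$ at a super-exponentially growing sequence of scales, the low-norm size of $f-f_{n+1}$ becomes quadratically small while high norms stay controlled; the a priori $C^{1+\beta}$ bound on $H$ is what keeps $H_n$, $DH_n$ and $(DH_n)^{-1}$, and hence all coefficients and right-hand sides in the scheme, tame at every step.

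The technical core, and the step I expect to be the main obstacle, is the tame solvability of $w\circ A-A\,w=\tilde g$. Because $A$ is hyperbolic a solution always exists: on the stable and unstable subspaces of $A$ it is given by the usual geometrically convergent series, or, in Fourier, along each orbit of frequencies under $A^T$ the solution is determined by a linear recursion that can be summed in whichever time direction the frequency escapes to infinity. The delicate point is the \emph{loss of derivatives}: when $A$ has several distinct eigenvalue moduli, the components of $w$ off the diagonal of the splitting $\R^N=\oplus_i E^i$ are governed by ratios $\rho_j/\rho_i$ of moduli, and, worse, an integer frequency $\xi$ may stay close to a proper $A^T$-invariant subspace $V$ for a long time before its orbit escapes, inflating the solution by a power of $\|\xi\|$. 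This is precisely where weak irreducibility enters: via the duality $E^i_{A^T}=(\hat E^i)^{\perp}$ it forces the $A^T$-invariant subspaces $V$ relevant to the estimate to contain no nonzero rational subspace, and, these subspaces being algebraic, a standard Diophantine estimate then gives $\dist(\xi,V)\gtrsim\|\xi\|^{-\sigma}$ for integer $\xi$; this caps the escape time and yields a tame bound $\|w\|_{C^r}\le C_r\|\tilde g\|_{C^{r+\tau}}$ with a fixed finite loss $\tau=\tau(A)$. For reducible $A$ this breaks down --- integer frequencies can sit on a bad subspace --- in agreement with the known examples of non-smooth conjugacies; it is also what makes the scheme hyperbolic rather than small-divisor in nature, the main novelty compared with previous applications of KAM.

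Granting the tame estimate, the usual Nash--Moser bookkeeping --- quadratic contraction of the low-norm residual against the fixed loss $\tau$, balanced by the smoothing scales, using that $f$ is $C^\infty$ --- gives $f_n\to f$ and $H_n\to H_\infty$ in every $C^k$, with $H_\infty$ a $C^\infty$ diffeomorphism and $A\circ H_\infty=H_\infty\circ f$. As $H_\infty$ is a conjugacy $C^0$ close to the identity, it equals the canonical such conjugacy, namely $H$; hence $H$, and therefore every conjugacy between $f$ and $A$, is $C^\infty$.
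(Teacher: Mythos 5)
Your overall architecture (first apply Theorem \ref{HolderConjugacy} to get a $C^{1+\beta}$ conjugacy, then run a KAM/Nash--Moser iteration built on the linearized equation) is the same as the paper's, but the technical core of your argument has a genuine gap: the claim that hyperbolicity of $A$, together with weak irreducibility and a Diophantine lower bound $\dist(\xi,V)\gtrsim\|\xi\|^{-\sigma}$ for integer frequencies, yields a \emph{tame} estimate $\|w\|_{C^r}\le C_r\|\tilde g\|_{C^{r+\tau}}$ for an exact solution of $Aw-w\circ A=\tilde g$. This is false. A solution always exists in $C^0$ (sum the geometric series separately on the stable and unstable parts of the target), but it is in general only H\"older of a fixed exponent governed by $\log\rho_i/\log\rho$, no matter how smooth $\tilde g$ is: in Fourier terms the recursion $\lambda\widehat w_n-\widehat w_{\tilde A n}=\widehat{\tilde g}_n$ can be summed forwards or backwards along each $\tilde A$-orbit, the two one-sided sums agree exactly when the orbit sums $D_{\tilde g}(n)=\sum_{i}\lambda^{-(i+1)}\widehat{\tilde g}_{\tilde A^i n}$ vanish, and by Lemma \ref{le:3}(iii) these are genuine obstructions to the existence of \emph{any} solution of regularity $\sigma(\lambda)$. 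They form an infinite family and do not vanish for a general smooth right-hand side, whether or not $A$ is irreducible; this is a cohomological obstruction phenomenon, not a small-divisor/escape-time issue, so no Diophantine bound on $\dist(\xi,V)$ can restore tame solvability, and the ``granting the tame estimate'' step has nothing to grant.

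What the paper does instead is not solve the linearized equation exactly: Theorem \ref{th:3} splits the (smoothed) right-hand side as $\mathcal R=(A\omega-\omega\circ A)+\Phi$, with $\omega$ smooth and tamely bounded, and shows the error $\Phi$ is small \emph{because} an approximate $C^{1+a}$ solution $\mathfrak h=H-I$ of the linearized equation already exists --- supplied, at every step of the iteration, by re-applying Theorem \ref{HolderConjugacy} to $f_n$, not just once at the start. Since $C^{1+a}$ regularity gives only slow Fourier decay, the obstructions cannot be estimated from $\mathfrak h$ directly; the paper differentiates the $E^i$-component of the equation along directions in $E^i$ to ``balance'' the twist (Lemma \ref{cor:2}) and then recovers Fourier coefficients of the error from those of its directional derivatives via Katznelson's lemma --- this is the only place weak irreducibility enters (Lemma \ref{le:4}), and its role is quite different from the escape-time estimate you propose. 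Finally, the resulting bound on $\Phi$ is \emph{not} tame (it involves both $\Psi$ and $\mathcal R$), so even the convergence stage cannot be the ``usual bookkeeping'': it needs the parameter choices of Proposition \ref{po:1} and \eqref{for:78}. Your proposal would need to be reworked along these lines; as written, the iteration cannot begin.
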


The constant $\ell=\ell(A)$ is chosen sufficiently large to satisfy the inequalities \eqref{for:78}.

\vskip.2cm

Applying Theorem \ref{th:4} we improve the regularity of the conjugacy from $C^{1+\text{H\"older}}$ to $C^\infty$ in the strongest local rigidity results for irreducible toral setting \cite{GKS11,GKS20}:

\begin{corollary} \label{local PD}
Let $A:\T^N\to\T^N$ be an irreducible Anosov automorphism
such that no three of its eigenvalues have the same modulus.
Let  $f$ be a $C^\infty$ diffeomorphism which is $C^{\ell}$-close to $A$ such that the
derivative $D_pf^n$ is conjugate to $A^n$ whenever $p=f^n(p)$.
Then $f$ is $C^{\infty}$ conjugate to $A$.
\end{corollary}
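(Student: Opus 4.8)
The plan is to deduce the corollary from Theorem \ref{th:4} by showing that, under the stated periodic data hypothesis, some conjugacy between $f$ and $A$ is automatically in $W^{1,q}(\T^N)$ for every $q$ — indeed, that it is already $C^{1+\text{H\"older}}$ — and that $A$ is weakly irreducible. The weak irreducibility is immediate: an irreducible $A$ has characteristic polynomial irreducible over $\Q$, so no proper rational $A$-invariant subspace exists at all; in particular each $\hat E^i$, being a proper $A$-invariant subspace of $\R^N$, meets $\Z^N$ only in $0$. Thus the only thing to supply is the regularity input needed to invoke Theorem \ref{th:4}.

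For that input I would cite the main local rigidity theorems of \cite{GKS11,GKS20}: under the hypothesis that $A$ is irreducible, no three eigenvalues share a modulus, and $D_pf^n$ is conjugate to $A^n$ at every periodic point $p=f^n(p)$, the conjugacy $H$ that is $C^0$-close to the identity is a $C^{1+\text{H\"older}}$ diffeomorphism. A $C^{1+\text{H\"older}}$ diffeomorphism of $\T^N$ is Lipschitz, hence lies in $W^{1,\infty}(\T^N)\subset W^{1,q}(\T^N)$ for all $q$, so in particular for some $q>N$; the same is true of $H^{-1}$, which is also $C^{1+\text{H\"older}}$. This verifies the Sobolev hypothesis of Theorem \ref{th:4}.

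It then remains to check the quantitative closeness requirement: Theorem \ref{th:4} applies provided $f$ is $C^{\ell}$-close to $A$ for the constant $\ell=\ell(A)$ determined by \eqref{for:78}. We simply take the $\ell$ in the statement of the corollary to be (the maximum of) this constant and whatever $C^1$-closeness \cite{GKS11,GKS20} demand for structural stability and their $C^{1+\text{H\"older}}$ conclusion; since $C^{\ell}$-closeness for large $\ell$ implies $C^1$-closeness, both hypotheses are met simultaneously. Theorem \ref{th:4} then yields that \emph{any} conjugacy between $f$ and $A$ is a $C^\infty$ diffeomorphism, and in particular $f$ is $C^\infty$ conjugate to $A$, which is the assertion of the corollary.

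The only genuine content beyond bookkeeping is the appeal to \cite{GKS11,GKS20} for the $C^{1+\text{H\"older}}$ regularity of $H$; everything else — the passage from $C^{1+\text{H\"older}}$ to a Sobolev class, the weak irreducibility of an irreducible automorphism, and the matching of the closeness constants — is routine. Consequently I do not anticipate any real obstacle; the corollary is essentially the observation that the output of the earlier periodic-data rigidity results is exactly strong enough to serve as the input of Theorem \ref{th:4}.
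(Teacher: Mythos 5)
Your proposal is correct and is exactly the argument the paper intends: the $C^{1+\text{H\"older}}$ conclusion of \cite{GKS11} supplies the $W^{1,q}$ (indeed Lipschitz) hypothesis of Theorem \ref{th:4}, irreducibility implies weak irreducibility (as in Section \ref{Weak irred}, via the rational span of the orbit of any integer point in $\hat E^i$), and $\ell$ is taken large enough for both results. No discrepancy with the paper's approach.
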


\begin{corollary} \label{local LS}
Let $A:\T^N\to\T^N$ be an irreducible Anosov automorphism such that no three of
its eigenvalues have the same modulus and there are no pairs of eigenvalues
of the form $\lambda, -\lambda$ or $i\lambda, -i\lambda$, where $\lambda\in \R$.
Let  $f$ be a volume-preserving $C^\infty$ diffeomorphism of $\T^N$ sufficiently
$C^{\ell}$-close to $A$. If the Lyapunov exponents of $f$ with respect to the volume
are the same as the Lyapunov exponents of $A$, then $f$ is $C^{\infty}$ conjugate to $A$.
\end{corollary}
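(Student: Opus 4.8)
The plan is to deduce this corollary from Theorem \ref{th:4} together with the $C^{1+\text{H\"older}}$ local rigidity result that is already available in this setting. Under the spectral hypotheses imposed on $A$ — irreducibility, no three eigenvalues of the same modulus, and no pair of eigenvalues of the form $\lambda,-\lambda$ or $i\lambda,-i\lambda$ with $\lambda\in\R$ — the main theorem of \cite{GKS20} applies to any volume-preserving $C^\infty$ diffeomorphism $f$ that is $C^1$ close to $A$ and whose Lyapunov exponents with respect to the volume coincide with those of $A$, and it asserts that the conjugacy $H$ is a $C^{1+\text{H\"older}}$ diffeomorphism. Since $\T^N$ is compact, such an $H$ has bounded derivative, hence is Lipschitz, and therefore lies in $W^{1,\infty}(\T^N)\subseteq W^{1,q}(\T^N)$ for every $q>N$; thus the Sobolev hypothesis of Theorem \ref{th:4} is automatically satisfied.

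Next I would verify the remaining hypothesis of Theorem \ref{th:4}, that $A$ is weakly irreducible. This follows from irreducibility: if some $\hat E^i$ contained a nonzero integer vector $v$, then the $\Q$-span of $\{A^k v : k\in\Z\}$ would be a nonzero $A$-invariant rational subspace contained in $\hat E^i$, hence proper (as $E^i\neq 0$), contradicting irreducibility of the characteristic polynomial of $A$; see Section \ref{Weak irred} for the details. With both hypotheses of Theorem \ref{th:4} in hand, that theorem yields that any conjugacy between $f$ and $A$ is a $C^\infty$ diffeomorphism, provided $f$ is $C^\ell$ close to $A$ with $\ell$ at least the value $\ell(A)$ appearing in Theorem \ref{th:4}; one simply takes the constant $\ell$ in the statement of the corollary large enough to simultaneously cover the $C^1$-closeness required by \cite{GKS20}. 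Finally, by \cite{W} any two conjugacies differ by an affine automorphism commuting with $A$, so they all have the same regularity and it suffices to argue for one of them.

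There is essentially no serious obstacle in this argument: all of the analytic content lies in Theorem \ref{th:4} and in \cite{GKS20}. The only points requiring care are that the list of spectral restrictions on $A$ is exactly the one under which \cite{GKS20} produces a $C^{1+\text{H\"older}}$ conjugacy, and the routine bookkeeping of the finite smoothness constant $\ell$. The companion Corollary \ref{local PD} is proved in exactly the same way, invoking the periodic-data local rigidity theorem \cite{GKS11} in place of \cite{GKS20} to obtain the initial $C^{1+\text{H\"older}}$ regularity of $H$.
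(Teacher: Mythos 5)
Your proposal is correct and matches the paper's intended argument: the paper derives this corollary by combining the $C^{1+\text{H\"older}}$ conjugacy from \cite{GKS20} (which is Lipschitz, hence in $W^{1,q}$ for all $q>N$) with the observation that irreducibility implies weak irreducibility, and then applying Theorem \ref{th:4}. Nothing further is needed.
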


Now we briefly discuss our approaches.
To prove Theorem \ref{HolderConjugacy}, we first establish results of independent interest
on H\"older continuity of a measurable conjugacy between linear cocycles over a hyperbolic system.
These results are formulated and discussed in Section \ref{cocycles statements}.
In the proof of the theorem we apply them to the conjugacy $DH$ between the derivative cocycles
$Df$ and $A$.  The methods used yield only H\"older continuity of the conjugacy between the derivative cocycles and hence only $C^{1+\beta}$ regularity of $H$. 

We note, however, that existence of {\it some} H\"older conjugacy between
the derivative cocycles $Df$ and $A$ does not imply in general that $H$ is $C^1$.
Indeed, if all eigenvalues of $A$ are simple with distinct moduli, then conjugacy of $D_pf^n$ and $A^n$, whenever $p=f^n(p)$, always gives H\"older conjugacy of the cocycles, but $H$ may not be $C^1$ if $A$ is reducible. 
\vskip.1cm

 Our approach to proving Theorem \ref{th:4} is different from  prior work on
this problem. We abandon the geometric arguments which use invariant foliations.
Instead, we introduce a new method which combines exponential mixing of the
unperturbed system with a KAM type iterative scheme. KAM methods have been extensively 
used to study local rigidity, primarily for elliptic systems, such as Diophantine translations of a torus. 
These systems are very different from the hyperbolic ones that we are considering.
Closest to our setting, KAM techniques were used in \cite{Damjanovic4} to prove $C^\infty$ local rigidity 
for some $\ZZ^2$ actions by partially hyperbolic toral automorphisms. However,
our approach is markedly different from the existing work in both main ingredients
of the KAM method: a detailed study of the linearized conjugacy equation, and setup and convergence of the  iterative process.  In particular, the linearized conjugacy equation in our case is a twisted 
cohomological equation with the twist given by a hyperbolic matrix.
 In contrast to the elliptic case, this creates obstructions to solving the equation by sufficiently smooth functions. In \cite{Damjanovic4} the structure of $\ZZ^2$ action was used in an essential way to show vanishing of the obstructions. In our setting, we instead use the existence of a $C^{1+\b}$ conjugacy $H$ given by  Theorem \ref{HolderConjugacy}. However, two difficulties arise. First, higher regularity is 
needed for analyzing the obstructions (see  Lemma \ref{le:3}(iii)). 
Second, $C^{1+\b}$
functions have slow decay of Fourier coefficients. In contrast, super-polynomial decay for $C^\infty$
functions, which yields super-exponential mixing,
was crucial in obtaining suitable estimates for convergence in the KAM iteration in \cite{Damjanovic4}. One of the key innovations in our approach is using directional derivatives to ``balance'' the twist.
By analyzing differentiated equations in H\"older category we are able
to construct a $C^\infty$ approximate solution of the twisted cohomological equation
and obtain suitable estimates.
This is done in Section \ref{linearized}, see remarks after Theorem \ref{th:3} for details.
Relating Fourier coefficients of a function and its directional derivatives is the only place where
we use weak irreducibility of $A$. The last issue is that the estimate we obtain for the approximate
solution is not tame, in contrast to traditional KAM estimates \cite{Fayad}. This creates problems in establishing convergence of the iterative
procedure, which we overcome  in Section \ref{proof th:4}.

\vskip.1cm

The paper is structured as follows. In Section \ref{cocycles statements} we formulate our results
on continuity of a measurable conjugacy between linear cocycles over a hyperbolic system,
Theorems \ref{measurable conjugacy} and \ref{constant cocycle}. These theorems are proved in
Sections \ref {cocycle proofs} and \ref{proof of constant},  respectively.
In  Section \ref{section:notation}  we summarize basic notations and facts used throughout the paper.
In  Section \ref{proof HC} we prove Theorem \ref{HolderConjugacy}.
In  Section \ref{linearized} we obtain a result on solving a twisted cohomological equation over $A$,
and in Section \ref{proof th:4} we complete the proof of Theorem \ref{th:4}.

\section{Results on continuity of conjugacy between linear cocycles} \label{cocycles statements}

In this section we consider linear cocycles over a transitive Anosov diffeomorphism $f$
of a compact connected manifold $\M$.
We recall that  $f$ is {\it Anosov}\, if there exist a splitting
of the tangent bundle $T\M$ into a direct sum of two $Df$-invariant
continuous subbundles $\tilde E^s$ and $\tilde E^u$,  a Riemannian
metric on $\M$, and  continuous functions $\nu$ and $\hat\nu$  such that
\begin{equation}\label{Anosov def}
\|Df_x(\bv^s)\| < \nu(x) < 1 < \hat\nu(x) <\|Df_x(\bv^u)\|
\end{equation}
for any $x \in \M$ and any unit vectors $\,\bv^s\in \tilde E^s(x)$ and $\,\bv^u\in \tilde E^u(x)$.
The diffeomorphism is {\em transitive} if there is a point $x\in \M$ with dense orbit.
All known examples satisfy this property.

\vskip.1cm
Let $A$ be a map from $\M$ to  $GL(N,\R)$.
The $GL(N,\R)${\em -valued  cocycle over $f$ generated by }$A$
is the map $\A:\,X \times \Z \,\to GL(N,\R)$ defined  by $\,\A(x,0)=\Id\,$ and for $n\in \N$,
$$
 \A(x,n)=\A_x^n = A(f^{n-1} x)\circ \cdots \circ A(x) \;\text{ and }\;
\A(x,-n)=\A_x^{-n}= (\A_{f^{-n} x}^n)^{-1}.
$$

We say that a $GL(d,\R)$-valued cocycle $\A$ is $\beta$-H\"older continuous
if there exists a constant $c$ such that 
$$
\,d(\A_x, \A_y) \le c\, \dist(x,y)^\beta  \quad \text{for all }x,y\in \M,
$$
where the metric $d$ on $GL(N,\R)$ is given by
$$
d (A, B) = \| A  - B \|  + \| A^{-1}  - B^{-1} \|, \quad\text{where $\|\,.\,\|$ is the operator norm.}
$$

More generally, we consider linear cocycles defined as follows.  Let  $P : \E \to \M $ be a finite
dimensional  $\beta$-H\"older vector bundle over $\M$.
A continuous {\em linear cocycle} over $f$ is a homeomorphism
$\A:\E\to\E$ such that
$$P \circ \A = f \circ P \quad\text{and\, $\A_x : \E_x \to \E_{fx}$\,
is a linear isomorphism for each $x\in \M$.}
$$
The linear cocycle $\A$ is called $\beta$-H\"older
 if $\A_x$ depends $\beta$-H\"older on $x$, with proper identification of fibers at nearby points. A detailed description of this setting is given in Section 2.2 of \cite{KS13}.
\vskip.2cm

The differential of $f$ and its restrictions to invariant sub-bundles of $T\M$, such as
 $\tilde E^s$ and $\tilde E^u$, are prime examples of linear cocycles.
\vskip.2cm

We say that a $\beta$-H\"older cocycle $\A$
over an Anosov diffeomorphism $f$ is\,
 {\em fiber bunched} if
there exist numbers $\theta<1$ and $c$  such that for all $x\in\M$ and $n\in \N$,
\begin{equation}\label{fiber bunched}
\| \A_x^n\|\cdot \|(\A_x^n)^{-1}\| \cdot  (\nu^n_x)^\beta < c\, \theta^n \;\text{ and}\quad
\| \A_x^{-n}\|\cdot \|(\A_x^{-n})^{-1}\| \cdot  (\hat \nu^{-n}_x)^\beta < c\, \theta^n,
\end{equation}
where
$ \;\nu^n_x=\nu(f^{n-1}x)\cdots\nu(x) \,\text{ and }\;
   \hat\nu^{-n}_x=(\hat \nu(f^{-n}x))^{-1}\cdots  (\hat\nu(f^{-1}x))^{-1}.$
\vskip.2cm

Let $\mu$ be an ergodic $f$-invariant measure on $\M$. We denote by  $\lambda_+(\A,\mu)$
and $\lambda_-(\A,\mu)$ the largest and smallest Lyapunov
exponents of $\A$ with respect to $\mu$ given by the Oseledets Multiplicative Ergodic Theorem.
For  $\mu$ almost all  $x\in \M$, they equal the limits
\begin{equation} \label{exponents}
\lambda_+(\A,\mu)= \lim_{n \to \infty} n^{-1} \ln \| \A_x ^n \|
\quad \text{and}\quad
\lambda_-(\A,\mu)=  \lim_{n \to \infty} n^{-1}  \ln \| (\A_x ^n)^{-1} \|^{-1}.
\end{equation}

 We say that a cocycle $\A$ {\em has one exponent}\, if
 for every $f$-periodic point $p$ the invariant measure
$\mu_p$ on its orbit satisfies $\lambda_+(\A,\mu_p)=\lambda_-(\A,\mu_p)$.
By Theorem 1.4 in \cite{K11}, this condition is equivalent to
$$
\lambda_+(\A,\mu)=\lambda_-(\A,\mu)\quad\text{for every ergodic $f$-invariant measure.}
$$
We note that if $\A$ has one exponent, then it is fiber bunched \cite[Corollary 4.2]{S15}.
\vskip.1cm

For $GL(N,\R)$ cocycles $\A$ and $\B$ over $f$, a (measurable or continuous) function $\c:\M\to GL(N,\R)$ such that
$$
  \A_x=\c(fx)\,\B_x\,\c(x)^{-1} \quad\text{for all }x\in \M
$$
is called a (measurable or continuous) {\em conjugacy} or {\em transfer map} between $\A$ and $\B$.
For linear cocycles $\A,\B : E\to E$ a conjugacy is defined similarly with $\c(x) \in GL(E_x)$.

The question whether a measurable conjugacy between two cocycles is continuous has been
studied in \cite{PaP97,Pa99,Sch99, S13,S15}. An example in \cite{PW01} shows that a measurable
conjugacy between two fiber bunched $GL(2,\R)$-valued cocycles is not necessarily continuous,
moreover, the generators of the cocycles in this example can be chosen arbitrarily close to the identity.
Continuity of a measurable conjugacy was proven for cocycles with values in a compact group
 \cite{PaP97,Pa99} and, somewhat more generally for cocycles with bounded distortion \cite{Sch99},
 for $GL(2, \R)$-valued cocycles with one exponent \cite{S13}, and for $GL(N, \R)$-valued cocycles
 such that one is fiber bunched and the other one is  uniformly quasiconformal \cite{S15}.
 The result in \cite{S13}  relied on two-dimensionality, and the uniform quasiconformality
 assumption in \cite{S15} is much stronger than having one exponent.
The next theorem establishes continuity of a measurable conjugacy between a fiber bunched cocycle and a cocycle with one exponent.

\begin{theorem} \label{measurable conjugacy}
Let $f$ be a  transitive $C^{1+\text{H\"older}}$   Anosov diffeomorphism of a compact manifold $\M$,
and let $\A$ and $\B$ be $\beta$-H\"older linear cocycles over $f$.
Suppose that $\A$ has one exponent  and $\B$ is fiber bunched.

Let  $\mu$ be an ergodic $f$-invariant  measure on $\M$ with full support and local product structure.
Then any $\mu$-measurable conjugacy between $\A$ and $\B$ is $\beta$-H\"older continuous, i.e.,
coincides with a $\beta$-H\"older continuous conjugacy on a set of full measure.

\end{theorem}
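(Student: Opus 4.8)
The plan is to recast the conjugacy $\c$ as a $\mu$-measurable invariant section of an auxiliary linear cocycle, to show that this section is invariant under the stable and unstable holonomies of that cocycle, and then to use the local product structure and full support of $\mu$ to build a genuine $\beta$-H\"older conjugacy agreeing with $\c$ almost everywhere. For the setup: since $\A$ has one exponent it is fiber bunched by \cite[Corollary 4.2]{S15}, and $\B$ is fiber bunched by hypothesis, so both cocycles carry $\beta$-H\"older stable and unstable holonomies $H^{s,\A},H^{u,\A},H^{s,\B},H^{u,\B}$, with $H^{s,\A}_{x,y}=\lim_n(\A^n_x)^{-1}\A^n_y$ for $y\in W^s_{\loc}(x)$ and geometric rate uniform in $x,y$ (see \cite{KS13}). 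Consider the linear cocycle $\mathcal D$ over $f$ on $\M\times\mathrm{Mat}(N,\R)$ given by $\mathcal D_x(M)=\A_x\,M\,\B_x^{-1}$; it carries the $\beta$-H\"older holonomies $H^{s,\mathcal D}_{x,y}(M)=H^{s,\A}_{x,y}\,M\,(H^{s,\B}_{x,y})^{-1}$ and the analogous $H^{u,\mathcal D}$, and the relation $\A_x=\c(fx)\B_x\c(x)^{-1}$ says precisely that $\c$ is a $\mu$-measurable $\mathcal D$-invariant section, $\c(fx)=\mathcal D_x(\c(x))$.

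The next step is to compute the Lyapunov exponents of $\mathcal D$ with respect to $\mu$. Since $\c$ is a $\mu$-measurable conjugacy it is finite $\mu$-a.e., so by Lusin's theorem and the Poincar\'e recurrence theorem the functions $\log\|\c(f^nx)^{\pm1}\|$ are tempered for $\mu$-a.e. $x$; feeding this into $\A^n_x=\c(f^nx)\B^n_x\c(x)^{-1}$ gives $\lambda_\pm(\B,\mu)=\lambda_\pm(\A,\mu)$. As $\A$ has one exponent, $\lambda_+(\A,\mu)=\lambda_-(\A,\mu)$, hence also $\lambda_+(\B,\mu)=\lambda_-(\B,\mu)$, and from $\|\mathcal D^n_x\|\le\|\A^n_x\|\,\|(\B^n_x)^{-1}\|$ together with $\|(\mathcal D^n_x)^{-1}\|\le\|(\A^n_x)^{-1}\|\,\|\B^n_x\|$ we obtain $\lambda_+(\mathcal D,\mu)=\lambda_-(\mathcal D,\mu)=0$. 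In particular every Lyapunov exponent of the projectivized cocycle $\mathbb{P}\mathcal D$ vanishes along $\mu$, and the scalar cocycle $x\mapsto\det\A_x/\det\B_x$ also has zero exponent along $\mu$.

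Now the image measure $\hat\mu=(\mathrm{id},[\c])_*\mu$ is a $\mathbb{P}\mathcal D$-invariant probability measure on the projective bundle that projects to $\mu$ and has vanishing fiber exponents, so by the invariance principle for linear cocycles with H\"older holonomies over a hyperbolic base (in the spirit of Ledrappier and Avila--Viana; cf.\ \cite{KS13}) its disintegration $x\mapsto\hat\mu_x=\delta_{[\c(x)]}$ is invariant under $H^{s,\mathcal D}$ and $H^{u,\mathcal D}$ on a set of full $\mu$-measure: for $\mu$-a.e.\ $x$ and conditional-a.e.\ $y\in W^s_{\loc}(x)$, the matrices $\c(y)$ and $H^{s,\A}_{x,y}\c(x)(H^{s,\B}_{x,y})^{-1}$ are proportional, and likewise along unstable leaves. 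Applying the (elementary) scalar version of this reasoning to the zero-exponent cocycle $\det\A_x/\det\B_x$ acting on the invariant section $x\mapsto\det\c(x)$, and using connectedness of the leaves to remove the remaining sign ambiguity, we conclude that $\c$ itself is $H^{s,\mathcal D}$- and $H^{u,\mathcal D}$-invariant $\mu$-a.e. Finally, fix a Lusin set $Z$ on which $\c$ is uniformly continuous and a density point $x_0\in Z$ at which full holonomy invariance holds; on a small product box every point $z$ can be written as $z\in W^s_{\loc}(w)$ with $w\in W^u_{\loc}(x_0)$, and $\bar\c(z):=H^{s,\mathcal D}_{w,z}\bigl(H^{u,\mathcal D}_{x_0,w}(\c(x_0))\bigr)$ defines a $\beta$-H\"older map that agrees with $\c$ on a full-measure subset of the box; full support of $\mu$ makes $\bar\c$ independent of the auxiliary choices, hence a globally defined $\beta$-H\"older function equal to $\c$ $\mu$-a.e., and the conjugacy identity, which holds on a dense set, persists by continuity.

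The delicate point is the invariance-principle step. The Hom-cocycle $\mathcal D$ is in general \emph{not} fiber bunched, since fiber bunching is not preserved under tensor or Hom constructions, so one cannot invoke the principle as a black box. The one-exponent hypothesis on $\A$ is exactly what compensates: it forces $\mathcal D$ to have vanishing Lyapunov exponents with respect to $\mu$, while its H\"older holonomies are obtained for free from those of $\A$ and $\B$; the substantive work is to re-run the invariance-principle argument in this weakened form --- controlling the conditional measures of $\hat\mu$ along the stable and unstable laminations purely from the existence of $\beta$-H\"older holonomies and the vanishing of the fiber exponents --- and then to upgrade holonomy invariance to genuine $\beta$-H\"older continuity of $\c$. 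A more hands-on route would try to prove the holonomy-intertwining relation pointwise directly from $\A^n_x=\c(f^nx)\B^n_x\c(x)^{-1}$, but the error term $(\A^n_x)^{-1}[\c(f^nx)-\c(f^ny)]\B^n_x$ cannot be driven to zero without already knowing regularity of $\c$, which is precisely why the measure-theoretic argument above seems necessary.
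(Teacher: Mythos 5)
Your strategy is genuinely different from the paper's: the paper never projectivizes, but instead invokes the structure theory of one-exponent cocycles from \cite{KS13} (a H\"older $\A$-invariant flag whose normalized quotients are isometric), transports that structure to $\B$ through the measurable conjugacy using continuity of measurable invariant sub-bundles and conformal structures, and then proves H\"older regularity of $\mathcal C$ block by block in the resulting upper-triangular decomposition, the off-diagonal blocks being handled by a measurable-rigidity statement for a twisted coboundary equation with uniformly bounded twist (Proposition \ref{twist-meas}). Within your route, the Hom-cocycle $\mathcal D_x(M)=\A_x M\B_x^{-1}$, the computation that all its exponents over $\mu$ vanish (via temperedness of $\|\mathcal C^{\pm1}\|$ along orbits), and the formula $\H^{s,\mathcal D}_{x,y}(M)=\H^{s,\A}_{x,y}M(\H^{s,\B}_{x,y})^{-1}$ are all correct; moreover the obstacle you single out is not the real one: the invariance principle in the form of \cite{ASV} is stated for cocycles \emph{admitting} stable and unstable holonomies, not for fiber bunched ones, so it applies to $\mathcal D$ as is and does not need to be re-run. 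As submitted, however, you have only gestured at this central mechanism rather than either citing it precisely or proving it.

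The genuine gap is the passage from the projective statement back to the linear one. The invariance principle controls only $[\mathcal C(x)]$, i.e. it yields $\mathcal C(y)=t(x,y)\,\H^{s,\A}_{x,y}\,\mathcal C(x)\,(\H^{s,\B}_{x,y})^{-1}$ with an unknown nonzero scalar $t(x,y)$, whereas your final construction of $\bar{\mathcal C}$ applies the matrix-level holonomies to $\mathcal C(x_0)$ and therefore needs $t\equiv 1$ almost everywhere. Your determinant argument for this requires, first, a measurable Liv\v{s}ic theorem for the nonvanishing scalar H\"older cocycle $\det\A_x/\det\B_x$ with respect to $\mu$ (a measure with local product structure), which you neither prove nor cite -- it is a genuine ingredient, essentially the twist-free scalar case of Proposition \ref{twist-meas} together with a $\pm1$-extension argument for the sign; second, even granting it, determinants only give $t^N=1$, and for even $N$ the residual sign cannot be ``removed by connectedness of the leaves'': $t$ is defined only on a full-measure set of pairs, so no continuity or connectedness argument applies to it as stated. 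A workable repair is to run the invariance principle on the unit-sphere bundle of $\mathrm{Mat}(N,\R)$ instead of the projective bundle (which kills the sign at the source) and then recover the remaining positive factor $\|\mathcal C(x)\|$ by the scalar measurable Liv\v{s}ic theorem; one must also verify at the end that the continuous representative is invertible everywhere, e.g. because $|\det\bar{\mathcal C}|$ coincides everywhere, by continuity, with the nonvanishing H\"older solution of the determinant coboundary -- a point your sketch also omits, although the theorem asserts that the a.e.\ version is a H\"older \emph{conjugacy}. Until these steps are supplied, the proof is incomplete.
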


As we mentioned above, continuity of a measurable conjugacy does not hold in general if $\A$ has more than one exponent, however,  we prove it in a special case of a constant $\A$.
Moreover, we obtain an estimate of the $\beta$-H\"older constant  $\hk_\beta(\c)$ of the conjugacy $\c$ in
terms of the $\beta$-H\"older constant of $\B$.

\begin{theorem} \label{constant cocycle}
Let $f$ and $\mu$ be as in Theorem \ref{measurable conjugacy}, and
let  $\A$ is be a constant $GL(N,\R)$-valued cocycle over $f$. Then
for any H\"older continuous $GL(N,\R)$-valued cocycle $\B$ sufficiently $C^0$ close to $\A$, any $\mu$-measurable conjugacy between $\A$ and $\B$ is  H\"older continuous.
\vskip.05cm

   More specifically, there exists a constant $\beta_0(A,f)$ so that the following holds. For any $0<\beta'<\beta_0 (A,f)$   there is $\delta >0$ and $k>0$ such that for any
$0<\beta \le \beta'$ and any $\beta$-H\"older $GL(N,\R)$-valued cocycle $\B$ over $f$ with
$ \|\B_x-A\|_{C^{0}}<\delta$, any $\mu$-measurable conjugacy $\c$ between $\A$ and $\B$ is
$\beta$-H\"older and its  $\beta$-H\"older constant satisfies
\begin{equation} \label {hk C}
 \hk_\beta(\c) \le k \,  \| \c\|_{C^0} \, \hk_\beta (\B)
 \qquad \text {and} \qquad  \hk_\beta(\c^{-1}) \le k \,  \| \c^{-1}\|_{C^0} \, \hk_\beta (\B).
 \end{equation}
\end{theorem}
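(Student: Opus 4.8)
The plan is to reduce the statement, via the decomposition of $\R^N$ by the moduli of the eigenvalues of $A$, to a family of problems each covered by Theorem \ref{measurable conjugacy}, and then to read off the bound \eqref{hk C} by tracking constants in the holonomy construction underlying that theorem. First I would write $\R^N=V_1\oplus\cdots\oplus V_k$, where $V_i$ is the sum of the generalized eigenspaces of $A$ for the eigenvalues of modulus $\rho_i$ and $\rho_1>\cdots>\rho_k>0$. Each $V_i$ is $A$-invariant, $A|_{V_i}$ has a single Lyapunov exponent $\log\rho_i$ over every $f$-invariant measure, and $\|(A|_{V_i})^n\|\cdot\|((A|_{V_i})^n)^{-1}\|$ grows at most polynomially in $n$. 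The multiplicative gaps $\rho_i/\rho_{i+1}>1$ make $\{V_i\}$ a dominated splitting for the constant cocycle over $f$; since domination is a $C^0$-open condition on the generator, for $\delta$ small $\B$ admits a dominated splitting $\R^N=F_1\oplus\cdots\oplus F_k$ into $\B$-invariant $\beta$-H\"older subbundles with $\dim F_i=\dim V_i$, with $F_i$ uniformly $C^0$-close to $V_i$, and with $\hk_\beta(F_i)\le C\,\hk_\beta(\B)$.

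Next I would show that $\c$ respects these splittings and that the resulting block cocycles are fiber bunched. Since $A\,V_i=V_i$, the measurable subbundles $E_i(x):=\c(x)^{-1}V_i$ are $\B$-invariant. The key preliminary point is that $\c$ and $\c^{-1}$ are essentially bounded; I would establish this by a Lusin-and-recurrence argument that exploits the full support and local product structure of $\mu$, in the spirit of the boundedness results for measurable transfer maps in \cite{S13,S15}. Given boundedness, $\c$ is tempered, so $\B$ has exactly $k$ distinct Lyapunov exponents with respect to $\mu$, namely $\log\rho_1>\cdots>\log\rho_k$, and $E_i$ is the Oseledets subspace of $\B$ for $\log\rho_i$. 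On the other hand $\{F_i\}$ is a dominated splitting with $\dim F_i=\dim V_i$; since its blocks are necessarily ordered by Lyapunov exponent, it must consist of exactly one Oseledets block of $\B$ per exponent, whence $F_i=E_i=\c^{-1}V_i$ for $\mu$-a.e.\ $x$. In particular $\c(x)$ carries $F_i(x)$ onto $V_i$, and from the a.e.\ identity $\B_x^n|_{F_i}=\c(f^nx)^{-1}A^n\c(x)$ the boundedness of $\c$ and the polynomial distortion of $A|_{V_i}$ force $\B|_{F_i}$ to have at most polynomial distortion, uniformly in $x$; hence $\B|_{F_i}$ has one exponent and so is fiber bunched by \cite[Corollary 4.2]{S15}. (The constants $\beta_0(A,f)$ and $\delta$ are dictated by the domination estimates and by the closeness needed to continue the splitting $\{V_i\}$.) Theorem \ref{measurable conjugacy}, applied to each pair $(A|_{V_i},\B|_{F_i})$ --- its first member has one exponent and its second is fiber bunched --- shows that the restriction of $\c$ to $F_i$ agrees $\mu$-a.e.\ with a $\beta$-H\"older conjugacy; reassembling $\c$ from these restrictions and the $\beta$-H\"older projections $\pi_i$ of $\R^N$ onto the $F_i$ then shows that $\c$ is $\beta$-H\"older.

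For the estimate \eqref{hk C} I would revisit the proof of Theorem \ref{measurable conjugacy} for the pairs $(A|_{V_i},\B|_{F_i})$, keeping track of constants. Because $A|_{V_i}$ is constant, its stable and unstable holonomies are trivial, so along each local stable (respectively unstable) leaf the restriction of $\c$ to $F_i$ is determined by its value at a single point together with the stable (respectively unstable) holonomy of $\B|_{F_i}$; composing the two identities over a local product neighbourhood reconstructs this restriction near an arbitrary point. The H\"older constants of these holonomies are bounded by a uniform multiple of $\hk_\beta(\B|_{F_i})\le C\,\hk_\beta(\B)$, so the reconstruction yields $\hk_\beta(\c|_{F_i})\le k'\,\|\c\|_{C^0}\,\hk_\beta(\B)$; feeding this and $\hk_\beta(\pi_i)\le C\,\hk_\beta(\B)$ into the reassembly gives the first inequality of \eqref{hk C}, and repeating the argument with $\c^{-1}$ in place of $\c$ --- a measurable conjugacy between $\B$, which has one exponent on each block, and the constant cocycle $A$ --- gives the second. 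The main obstacle I anticipate is exactly this quantitative bookkeeping --- arranging that the holonomy series, the local product reconstruction, and the H\"older regularity of the dominated bundles all produce bounds linear in $\hk_\beta(\B)$ and uniform over $0<\beta\le\beta'$ --- together with the preliminary essential boundedness of $\c$, which is what makes each block cocycle $\B|_{F_i}$ genuinely fiber bunched (indeed one-exponent) and hence allows $\delta$ to depend on $\beta'$ alone.
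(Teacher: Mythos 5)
Your plan is, in outline, the paper's own proof: split $\R^N=\oplus_i\E^i$ by the moduli of the eigenvalues of $A$; continue this to a $\beta$-H\"older $\B$-invariant splitting $\oplus_i\EE^i_x$ (your $F_i$) with $\hk_\beta(\EE^i)\le C\,\hk_\beta(\B)$, which is exactly Lemma \ref{Ei est}, proved by a quantitative graph-transform argument and responsible for the value \eqref{beta 0} of $\beta_0(A,f)$; show $\c$ carries $\E^i$ to $\EE^i$ by a Lyapunov exponent comparison; apply Theorem \ref{measurable conjugacy} to each pair $(\A_i,\B_i)$; and finally obtain \eqref{hk C} from the triviality of the holonomies of the constant cocycle, which gives the exact identity $\c_i(x)=\c_i(y)\circ \H^{\B_i,s}_{x,y}$ along stable leaves (via holonomy intertwining, \cite[Proposition 4.5]{S15}), combined with the bound $\|\H^{\B_i,s}_{x,y}-\Id\|\le k_3\,\hk_\beta(\B)\,d(x,y)^\beta$ of Lemma \ref{Hi est} and the local product structure. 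So the architecture and the source of the linear-in-$\hk_\beta(\B)$ estimate are the same as in Section 5.

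The one step that would fail as written is your ``key preliminary point'' that $\c$ and $\c^{-1}$ are \emph{essentially bounded}, to be established ``by a Lusin-and-recurrence argument.'' Essential boundedness of a measurable conjugacy is not accessible at that stage: from the conjugacy relation one only gets $\c(x)=(\B^n_x)^{-1}\,\c(f^nx)\,A^n$, and transporting a bound from a return time to the Lusin set back to $x$ costs a factor $\|(\B^n_x)^{-1}\|\cdot\|A^n\|$, which grows exponentially for hyperbolic $A$; the boundedness statements in \cite{S13,S15} are consequences of (not inputs to) continuity theorems, and in general a measurable conjugacy between fiber bunched cocycles need not be continuous \cite{PW01}. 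The paper avoids this entirely: Lemma \ref{equal exp} compares the forward and backward exponent of each individual vector using only a positive-measure set on which $\c,\c^{-1}$ are bounded together with recurrence to it, and that alone forces $\c_x(\E^i)=\EE^i_x$ a.e. Your second use of boundedness --- deducing uniform polynomial distortion of $\B|_{F_i}$, hence one exponent, hence fiber bunching --- is also unnecessary and should be replaced: fiber bunching of each block follows directly from $C^0$-closeness of $\B$ to $A$, since the continued splitting comes with the rates \eqref{rateB}, so $\|\B_i^n\|\cdot\|(\B_i^n)^{-1}\|\le\sigma^n$ with $\sigma$ arbitrarily close to $1$, while the constant block $\A_i$ is the member of the pair that has one exponent in the application of Theorem \ref{measurable conjugacy}. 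With these two repairs (exponents vector-by-vector instead of essential boundedness, and fiber bunching from closeness instead of from the conjugacy), your argument coincides with the paper's, including the quantitative bookkeeping in Lemmas \ref{Ei est}--\ref{Hi est} that you correctly identify as the main remaining work.
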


The constant $\beta_0(A,f)$ is explicitly  given by  \eqref{beta 0} in Section 5.


\section{Basic notations and facts} \label{section:notation}

\subsection{Norms and H\"older constants.}

For $r\in\NN\cup\{0\}$ we use $\norm{\cdot}_{C^r}$ for the $C^r$ norm of functions with continuous derivatives up to order $r$ on $\TT^N$.

For a $\beta$-H\"{o}lder function $g$, $0<\beta\le1$, we denote its
$\beta$-H\"{o}lder constant, or H\"older seminorm, by
\begin{align*}
K_\beta(g)=\norm{g}_{C^{0,\beta}}\,\overset{\text{def}}{=}\,\sup \,\{\,|g(x)-g(y)| \, d(x,y)^{-\beta} : \; x\neq y \in \T^N\,\}\,<\,\infty.
\end{align*}
We denote by $C^{1,\beta}$ or $C^{1+\beta}$ the space of functions with $\beta$-H\"{o}lder first derivative
with norm
\begin{align*}
       \norm{f}_{C^{1+\beta}} \,\overset{\text{def}}{=} \,\norm{f}_{C^1}+K_\beta (D f).
\end{align*}

\subsection{Invariant subspaces}

For $A\in GL(N,\R)$ let  $\rho_1 < \dots < \rho_L$ be the distinct moduli of its eigenvalues and let
\begin{equation} \label{splitL}
\R^N = \E^1 \oplus  \dots \oplus \E^L
    \end{equation}
be the corresponding $A$-invariant splitting, where $\E^i$ is the direct sum of generalized eigenspaces
corresponding to the eigenvalues with modulus $\rho_i$. We also denote
\begin{equation} \label{A_i}
{\hE}^{i}\overset{\text{def}}{=}\oplus_{\rho_j \neq \rho_i} E^{j}, \qquad \ A_i=A|_{ \E^i}:E^i\to E^i, \qquad \text{and} \qquad N_i=\dim E^i.
    \end{equation}
For the Euclidean norm on $\R^N$ there is a constant $K_A$ such that for each $i$ we have
    \begin{equation}\label{jordan}
     \norm{A_i^m}\leq K_A \,\rho_i^m \,(\abs{m}+1)^N  \quad \text{for all }  m\in\ZZ.
    \end{equation}
Also, for any $\e >0$ there is an ``adapted" inner product on $\R^N$ such that the direct sum
$\oplus E^i$ is orthogonal and  for each $1\le i\le L$,
\begin{equation} \label{rateA2}
(\rho_i-\e)^m \le \| A_i^m u \| \le  (\rho_i+\e)^m
\; \text{ for any unit vector }u\in \E^i  \text{ and any } m\in \Z.
\end{equation}
 If $A$ is hyperbolic then  $\rho_{i_0}<1 <\rho_{i_0+1} $ for some $1\le i_0<L$, and we define
 the stable and unstable subspaces of $A$ as
\begin{gather*}
  E^{s}\overset{\text{def}}{=}\oplus_{\rho_i<1} E^{i}\qquad \text{and} \qquad  E^{u}\overset{\text{def}}{=}\oplus_{\rho_i>1} E^{i}.
\end{gather*}
\vskip.3cm

\subsection{Weak irreducibility.} \label{Weak irred}
Recall that  $GL(N,\Z)$ denotes the integer matrices with determinant $\pm1$.
We say that $A\in GL(N,\Z)$ is \emph{weakly irreducible} if each $\hat E^i$ contains no nonzero
elements of $\Z^N$.
 Irreducibility over $\Q$ implies weak irreducibility. Indeed, if there is a nonzero integer point $n \in \hat E^i$ then $span \{ A^m n : m\in \Z \} \subset \hat E^i$ is a nontrivial rational invariant subspace.
In fact, weak irreducibility is determined by the characteristic polynomial of $A$ as follows.

 \begin{lemma}  \label{weak irred}
A matrix $A\in GL(N,\Z)$ is weakly irreducible if and only if there is a set $\Delta \subset \R$ so that for each
irreducible over $\Q$ factor of the characteristic polynomial of $A$ the set of moduli of its roots
equals $\Delta$.
\end{lemma}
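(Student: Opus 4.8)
The plan is to analyze the $A$-invariant rational subspaces of $\R^N$ and relate them to the factorization of the characteristic polynomial $\chi_A$ over $\Q$. Write $\chi_A = \prod_{k} g_k$ as a product of distinct irreducible factors over $\Q$ (ignoring multiplicities, since the set of roots of a power is the set of roots). For each $k$, let $V_k \subset \R^N$ be the sum of the generalized eigenspaces of $A$ whose eigenvalues are roots of $g_k$; this is a rational $A$-invariant subspace, it is defined over $\Q$ (it is $\ker g_k(A)$ on the appropriate factor, or more cleanly the kernel of $g_k(A)$ intersected with a complement), and $\R^N = \oplus_k V_k^{\Q}\otimes\R$ with the $V_k$ spanning a lattice decomposition up to finite index. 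Let $\Delta_k \subset \R$ denote the set of moduli of roots of $g_k$. With this notation the claim becomes: $A$ is weakly irreducible iff all the $\Delta_k$ coincide.

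For the ``if'' direction, suppose all $\Delta_k = \Delta$. Fix $i$ and suppose $n \in \hat E^i \cap \Z^N$, $n\neq 0$. The rational subspace $W = \overline{\mathrm{span}}_{\Q}\{A^m n : m\in\Z\}$ is a nonzero $A$-invariant rational subspace contained in $\hat E^i$. Its complexification is a sum of generalized eigenspaces; since $W$ is rational, it is a sum of the $V_k$'s that it meets — more precisely $W \supseteq V_k^\Q$ for at least one $k$, because the minimal polynomial of $A|_W$ is a product of some of the $g_k$'s. Pick such a $k$: then $V_k \subseteq W \subseteq \hat E^i$, so no root of $g_k$ has modulus $\rho_i$, i.e. $\rho_i \notin \Delta_k = \Delta$. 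But $\rho_i$ is the modulus of some eigenvalue of $A$, hence a root of some $g_{k'}$, so $\rho_i \in \Delta_{k'} = \Delta$ — contradiction. Hence $\hat E^i \cap \Z^N = \{0\}$ for every $i$.

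For the ``only if'' direction, I argue contrapositively: suppose the $\Delta_k$ are not all equal. Then there is a modulus value $\rho$ and an index set partition — concretely, there exist $k_0$ and $i$ with $\rho_i \notin \Delta_{k_0}$, so that every root of $g_{k_0}$ has modulus different from $\rho_i$, which means $V_{k_0} \subseteq \hat E^i$. Since $V_{k_0}$ is a nonzero rational $A$-invariant subspace, $V_{k_0}^\Q = V_{k_0}\cap\Q^N$ is a nonzero rational subspace, and clearing denominators it contains a nonzero integer vector $n \in \Z^N \cap V_{k_0} \subseteq \hat E^i$. Thus $A$ is not weakly irreducible. Combining the two directions gives the equivalence, with $\Delta$ being the common value of the $\Delta_k$'s.

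The main technical point to handle carefully — and the step I expect to require the most care — is the claim that a nonzero rational $A$-invariant subspace $W$ necessarily contains $V_k^\Q$ for some $k$ (equivalently, that the minimal polynomial of $A|_W$ is a product of some of the irreducible factors $g_k$, with each $V_k$ either contained in $W$ or meeting it trivially on the level of generalized eigenspaces). This follows from the structure theory of $\Q[x]$-modules: $\Q^N$ decomposes as a $\Q[A]$-module into primary components, one for each $g_k$, and any $A$-invariant rational subspace is a direct sum of $\Q[A]$-submodules of these primary components; since $g_k$ is irreducible, each primary component $V_k^\Q$ is itself semisimple-isotypic, so any submodule of it has minimal polynomial a power of $g_k$ and, crucially, if $W$ has a nonzero projection to the $g_k$-primary component then that projection — which equals $W\cap V_k^\Q$ when restricted appropriately — is already all of $V_k^\Q$ provided $g_k$ appears with multiplicity one in $\chi_A$; in general one works with the radical to reduce to multiplicity one, which is harmless since only the root sets matter. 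I would state this as a short lemma on rational invariant subspaces, prove it by the primary decomposition, and then the two implications above are immediate. One should also record at the outset that the $V_k$ are defined over $\Q$ so that ``$V_k \subseteq \hat E^i$'' over $\R$ transfers to ``$V_k$ contains integer points'' — this is just that a nonzero $\Q$-subspace of $\Q^N$ contains a nonzero point of $\Z^N$ after scaling.
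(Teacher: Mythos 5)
Your strategy is the same as the paper's (primary decomposition of $\Q^N$ over the irreducible factors, rational invariant subspaces contain integer points, orbit span of an integer vector in $\hat E^i$), and your ``only if'' direction is correct and essentially identical to the paper's. The gap is in the key lemma you isolate for the other direction: it is \emph{false} that a nonzero rational $A$-invariant subspace $W$ must contain some primary component $V_k^\Q$. This fails precisely when some irreducible factor $g_k$ occurs in $\chi_A$ with multiplicity $d_k\ge 2$: for $A=\operatorname{diag}(C,C)$ with $C$ the companion matrix of an irreducible $g$, the subspace $\Q^{\deg g}\times 0$ is rational and invariant but is a proper subspace of the unique primary component $V_1^\Q=\Q^N$. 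Your suggested repair, ``work with the radical to reduce to multiplicity one,'' is not a proof: $A$ is fixed, and the lattice of invariant subspaces of a primary component with $d_k\ge 2$ is genuinely larger than in the multiplicity-one case, so there is no such reduction. Moreover the repeated-factor case is not peripheral here --- weak irreducibility is introduced exactly to cover matrices such as $\left(\begin{smallmatrix}A&0\\0&A\end{smallmatrix}\right)$ and $\left(\begin{smallmatrix}A&I\\0&A\end{smallmatrix}\right)$, whose characteristic polynomials have square factors, so an argument valid only for squarefree $\chi_A$ misses the intended scope.

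The flaw is in the strength of the intermediate claim, not in the approach, and the fix stays inside your framework (and is what the paper does). Since the projections onto the primary components are polynomials in $A$, one has $W=\oplus_k\bigl(W\cap V_k^\Q\bigr)$, so $W\cap V_k^\Q\neq 0$ for some $k$; the minimal polynomial of $A$ on this intersection divides $g_k^{d_k}$ and hence is a positive power of $g_k$, so its characteristic polynomial is too, and therefore \emph{every} root of $g_k$ is an eigenvalue of $A$ on $\bigl(W\cap V_k^\Q\bigr)\otimes\C$. If all the moduli sets equal $\Delta\ni\rho_i$, choose a root $\lambda$ of $g_k$ with $|\lambda|=\rho_i$; the corresponding eigenvector lies in $W_\C\cap E^i_\C=(W\cap E^i)_\C$, so $W\cap E^i\neq 0$, contradicting $W\subseteq\hat E^i$. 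With this weaker statement substituted for your lemma, your proof is complete and coincides with the paper's.
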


 \begin{proof}
 Let $A\in GL(N,\Z)$, let  $p_A$ be its characteristic polynomial, and let $p_A=\prod_{k=1}^K p_k^{d_k}$ be its prime decomposition over $\Q$. Then we have the corresponding splitting $\R^N = \oplus V_k$
 into rational  $A$-invariant subspaces $V_k =\ker p_k^{d_k}(A)$.  We also have the (non-rational)
 $A$-invariant splitting \eqref{splitL}, and we set $\Delta=\{\rho_1, \dots, \rho_L\}$.
We will show that $A$ is weakly irreducible if and only if $\Delta$ is the set of moduli of the roots
for each $p_k$.

If for some $\rho_i\in \Delta$ and $k\in \{1,\dots , K\}$ no root of the irreducible polynomial $p_k$ has modulus $\rho_i$, then $V_k \subset \hE^i$. Hence $A$
 is not weakly irreducible as $V_k$ is a rational subspace and hence it contains nonzero points of $\Z^N$.

 Conversely, suppose each $p_k$ has $\Delta$ as the set of moduli of its roots.
 Suppose that for some $i$ there is $0\ne n \in (\Z^N \cap \hE^i)$. Then for some $k$ its projection
 $n_k$ to $V_k$ is a nonzero rational vector. We note that $n_k \in \hE^i$ as $\hE^i= \oplus_k (\hE^i \cap V_k)$. Then
 $$W=span \{ A^m n_k : m\in \Z \}
 $$
 is a rational $A$-invariant subspace
 contained in $\hE^i \cap V_k$. Then the characteristic polynomial of $A|_W$ is a power of $p_k$
 and hence $W$ contains an eigenvector with eigenvalue of modulus $\rho _i \in \Delta$. Thus
 $W \cap E^i \ne 0$,  contradicting $W \subset \hE^i$. Thus $A$ is weakly irreducible.
 \end{proof}

It follows from the lemma that if $A$ is irreducible or weakly irreducible then the following
matrices are weakly irreducible
$$
\left(\begin{array}{cc}A & 0 \\ 0 & A \end{array}\right) \quad \text{and}\quad
\left(\begin{array}{cc}A & \text{I} \\ 0 & A \end{array}\right).
$$
These matrices are not irreducible and the latter is not diagonalizable.




\section{Proof of Theorem \ref{measurable conjugacy}}
\label{cocycle proofs}

Let $f$ be a  transitive $C^{1+\text{H\"older}}$ Anosov diffeomorphism of a compact manifold $\M$,
let $\E$ be a $\beta$-H\"older vector bundle over $\M$,
and let $\F :\E \to \E$ be a $\beta$-H\"older linear cocycle over $f$.

In Section \ref{holonomy} we recall the definition and properties of holonomies for linear cocycles,
in Section \ref{twisted} we prove a preliminary results on twisted cocycles, and in Section \ref{proof of thm measurable} we give a proof of Theorem \ref{measurable conjugacy}.


\subsection{Holonomies of fiber bunched linear cocycles} \label{holonomy}

The notion of {\it holonomies} for linear cocycle was introduced in \cite{BV,V}
Existence of holonomies was proved in \cite{V,ASV} under a stronger ``one-step" fiber
bunching condition and then extended to bundle setting and weaker fiber bunching \eqref{fiber bunched} in  \cite{KS13,S15}.

\begin{proposition} \label{existence of holonomies}
Let $\F$ be a $\beta$-H\"older fiber bunched linear cocycle over $(\M,f)$.
Then for every $x\in \M$ and $y\in W^s(x)$ the limit
\begin{equation}\label{hol def}
 \H^{s}_{x,y} =  \H^{\F,s}_{x,y} =\underset{n\to\infty}{\lim} \,(\F^n_y)^{-1} \circ \F^n_x,
\end{equation}
called the {\em stable holonomy,} exists and satisfies
\begin{itemize}
\item[($\H$1)] $\H^{s}_{x,y}$ is an invertible  linear map from $\E_x$ to $\E_y$;
\vskip.1cm
\item[($\H$2)]  $\H^{s}_{x,x}=\Id\,$ and $\,\H^{s}_{y,\,z} \circ \H^{s}_{x,\,y}=\H^{s}_{x,z}$,\,\,
and hence  $(\H^{s}_{x,y})^{-1}=\H^{s}_{y,x};$
\vskip.1cm
\item[($\H$3)]  $\H^{s}_{x,y}= (\F^n_y)^{-1}\circ \H^{s}_{f^nx ,\,f^ny} \circ \F^n_x\;$
for all $n\in \N$;
\vskip.1cm
\item[($\H$4)] $\| \H^{s}_{\,x,y} - \Id \,\| \leq c\cdot d (x,y)^{\beta},$
 where $c$ is independent of $x$   and $y\in W^s_{\text{loc}}(x).$\\

\end{itemize}
\end{proposition}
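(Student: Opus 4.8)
The plan is to first prove convergence of the limit \eqref{hol def} for $y$ in the local stable manifold $W^s_{\text{loc}}(x)$, read off property ($\H$4) from that proof, and then extend everything to all of $W^s(x)$ via the cocycle relation. So fix $x\in\M$ and $y\in W^s_{\text{loc}}(x)$ and put $\Delta_n=(\F^n_y)^{-1}\circ\F^n_x:\E_x\to\E_y$, so $\Delta_0=\Id$. Peeling off the last factor in $\F^{n+1}_x=\F_{f^nx}\circ\F^n_x$ and likewise for $y$, and inserting $\Id=(\F_{f^ny})^{-1}\circ\F_{f^ny}$, gives the telescoping identity
\[
\Delta_{n+1}-\Delta_n=(\F^n_y)^{-1}\circ(\F_{f^ny})^{-1}\circ\big(\F_{f^nx}-\F_{f^ny}\big)\circ\F^n_x ,
\]
where the difference $\F_{f^nx}-\F_{f^ny}$ is formed after identifying the nearby fibers $\E_{f^nx}$ and $\E_{f^ny}$ through the $\beta$-H\"older structure of the bundle (the bundle formalism of \cite{KS13} is needed here for bookkeeping but changes nothing essential). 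It then suffices to estimate the right-hand side and sum in $n$.

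For the estimate I would combine four ingredients: $\beta$-H\"older continuity of $\F$, giving $\|\F_{f^nx}-\F_{f^ny}\|\le c\,d(f^nx,f^ny)^{\beta}$; the Anosov contraction along $W^s_{\text{loc}}$, giving $d(f^nx,f^ny)\le C\,\nu^n_x\,d(x,y)$; compactness of $\M$, giving $M:=\sup_{z}\|\F_z^{-1}\|<\infty$; and the trivial identity $(\F^n_y)^{-1}=\Delta_n\circ(\F^n_x)^{-1}$, which lets me replace $\|(\F^n_y)^{-1}\|\cdot\|\F^n_x\|$ by $\|\Delta_n\|\cdot\|(\F^n_x)^{-1}\|\cdot\|\F^n_x\|$. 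Substituting these into the displayed identity and using the fiber bunching inequality \eqref{fiber bunched} cancels the $(\nu^n_x)^{\pm\beta}$ factors and yields
\[
\|\Delta_{n+1}-\Delta_n\|\le C_1\,\theta^n\,d(x,y)^{\beta}\,\|\Delta_n\| .
\]
A discrete Gr\"onwall estimate then gives $\|\Delta_n\|\le\prod_{j\ge0}\big(1+C_1\theta^j d(x,y)^{\beta}\big)=:D<\infty$, uniformly for $y\in W^s_{\text{loc}}(x)$; hence $\sum_n\|\Delta_{n+1}-\Delta_n\|<\infty$, so $(\Delta_n)$ is Cauchy and converges to some $\H^s_{x,y}$, and the same sum bounds $\|\H^s_{x,y}-\Id\|=\lim_N\|\Delta_N-\Delta_0\|$ by $c\,d(x,y)^{\beta}$, which is ($\H$4).

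To finish I would deduce ($\H$1)--($\H$3). When $d(x,y)$ is small, ($\H$4) gives $\|\H^s_{x,y}-\Id\|<1$, so $\H^s_{x,y}$ is invertible; for an arbitrary $y\in W^s(x)$ I choose $m$ with $f^my\in W^s_{\text{loc}}(f^mx)$ and set $\H^s_{x,y}:=(\F^m_y)^{-1}\circ\H^s_{f^mx,f^my}\circ\F^m_x$, and a short computation shows this is independent of such $m$, equals $\lim_{k\to\infty}(\F^k_y)^{-1}\circ\F^k_x$ (so \eqref{hol def} converges for every $y\in W^s(x)$), and is a linear isomorphism --- this is ($\H$1). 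Property ($\H$3) is then immediate from the limit form, and ($\H$2) follows by multiplying the defining limits, $\H^s_{y,z}\circ\H^s_{x,y}=\lim_n(\F^n_z)^{-1}\circ\F^n_y\circ(\F^n_y)^{-1}\circ\F^n_x=\H^s_{x,z}$, together with $\H^s_{x,x}=\Id$ and, for $z=x$, $(\H^s_{x,y})^{-1}=\H^s_{y,x}$.

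The one real obstacle is that in $\|(\F^n_y)^{-1}\|\cdot\|\F^n_x\|$ the two base points $x$ and $y$ sit in different factors, so the single-orbit bunching condition \eqref{fiber bunched} cannot be applied directly; the identity $(\F^n_y)^{-1}=\Delta_n\circ(\F^n_x)^{-1}$ resolves this by routing the mismatch through the very quantity $\|\Delta_n\|$ being estimated, which is what makes the recursion self-contained. Everything else --- the identification of nearby fibers, uniformity of the constants $c,C,M,C_1$, and the contraction rate along $W^s_{\text{loc}}$ --- is routine and is treated in detail in \cite{KS13,S15}.
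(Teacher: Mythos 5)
Your proof is correct. Note that the paper itself does not prove Proposition \ref{existence of holonomies}: it is quoted with references (the existence under the fiber bunching condition \eqref{fiber bunched} and in the bundle setting is attributed to \cite{V,ASV,KS13,S15}), so there is no in-paper proof to match verbatim. The closest internal analogue is Lemma \ref{Hi est}, where the holonomy is built from the same telescoping decomposition, i.e.\ writing $(\F^n_y)^{-1}\circ\F^n_x$ as $\Id$ plus a sum of terms of the form $(\F^m_y)^{-1}\circ r_m\circ \F^m_x$ with $r_m$ controlled by H\"older continuity and the stable contraction. Your argument follows that standard scheme, but differs in how it handles the mixed-base-point product $\|\F^n_x\|\cdot\|(\F^n_y)^{-1}\|$: in Lemma \ref{Hi est} (and in the references) this is bounded either by the special norm estimates available there (cf.\ \eqref{F norm}) or by comparing $\|(\F^n_y)^{-1}\|$ with $\|(\F^n_x)^{-1}\|$ term by term along the stable leaf via H\"older continuity, whereas you route it through the quantity $\|\Delta_n\|$ itself using $(\F^n_y)^{-1}=\Delta_n\circ(\F^n_x)^{-1}$ and close the loop with a discrete Gr\"onwall bound $\|\Delta_n\|\le\prod_j\bigl(1+C_1\theta^j d(x,y)^{\beta}\bigr)$. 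This is a legitimate and arguably cleaner way to make single-orbit fiber bunching \eqref{fiber bunched} suffice, at the cost of the mild bootstrap; it yields the same uniform constant in ($\H$4). The remaining steps — extension from $W^s_{\text{loc}}$ to $W^s$ by $\H^s_{x,y}=(\F^m_y)^{-1}\circ\H^s_{f^mx,f^my}\circ\F^m_x$, invertibility, and ($\H$2)--($\H$3) from the limit form — are the standard ones and are carried out correctly, with the bundle identifications properly acknowledged as bookkeeping.
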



\subsection{Twisted cocycles} \label{twisted}
In this section we study the coboundary equation  over $f$ twisted by a $\beta$-H\"older
linear cocycle  $\F :\E \to \E$. We will use its main result, Proposition \ref{twist-meas},
 in the inductive process in the proof of Theorem \ref{measurable conjugacy}.
\vskip.1cm

Let $\p,\q : \M \to \E$ be sections of the bundle $\E$ over $\M$. We consider the equation
\begin{equation}\label{twisteq}
\q(x)=\p(x) + (\F_x)^{-1} (\q(fx))\quad \text{equivalently}\quad \p(x)=\q(x) - (\F_x)^{-1} (\q(fx)).
\end{equation}
Iterating \eqref{twisteq} and denoting $\F^n_x=   \F_{f^{n-1}x} \circ  \cdots  \circ \F_{fx}\circ  \F_x: \E_x \to \E_{f^{n}x}$ we obtain
$$
\begin{aligned}
 \q(x)& =\p(x) + (\F_x)^{-1} (\q(fx))= \p(x) + (\F_x)^{-1}[\p (fx)+\F_{fx} (\q(f^2x))]=...\\
  &= \p(x) + (\F_x)^{-1}(\p (fx))+\dots + (\F^{n-1}_{x})^{-1}(\p(f^{n-1}x))+  (\F_{f^{n-1}x})^{-1} (\q(f^{n}x)).
\end{aligned}
$$
Thus
\begin{equation}\label{twisteq iter}
\q(x) = \P^n(x)+ (\F_{f^{n-1}x})^{-1} (\q(f^{n}x)), \quad\text{where}
\end{equation}
$$
\P^n(x)= \p(x) + (\F_x)^{-1}(\p (fx))+\dots + (\F^{n-1}_{x})^{-1}(\p(f^{n-1}x)) \in \E_x.
$$

We say that $\F$ is {\em uniformly bounded}\, if there exists $K$
such that $\| \F_x^n \| \le K$ for all $x\in \M$ and $n\in \Z$.
A $\beta$-H\"older bounded cocycle is fiber-bunched and hence it has stable holonomies $\H^s_{x,y}:\E_x \to \E_y$ where $y \in W^s(x)$.

\begin{lemma} \label{twist hol}
Suppose that $\p$ is a $\beta$-H\"older section and that $\F$ is a uniformly bounded $\beta$-H\"older
cocycle.
Then for any $x\in \M$ and $y \in W^s(x)$ the following limit exists
$$
\P_{x,y}^s= \lim _{n \to \infty} (\P^n(x) - \H_{y,x}^s\P^n(y) )=
\sum_{k=0}^\infty\, [\, (\F^{k}_{x})^{-1}(\p(f^{k}x))-  \H_{y,x}^s (\F^{k}_{y})^{-1}(\p(f^{k}y))\,]
$$
and satisfies $ \|\P_{x,y}^s\| \le K' d (x,y)^\beta$ with uniform $K'$ for all $x\in \M$ and $y \in W^s_{loc}(x)$.
\end{lemma}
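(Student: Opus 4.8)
The plan is to show that the partial sums of the series
$$
S^N(x,y)=\sum_{k=0}^{N}\,\bigl[(\F^{k}_{x})^{-1}(\p(f^{k}x))-\H_{y,x}^s(\F^{k}_{y})^{-1}(\p(f^{k}y))\bigr]
$$
form a Cauchy sequence in $\E_x$, with geometric tails, uniformly for $y\in W^s_{loc}(x)$; the limit is then automatically $\P^n(x)-\H^s_{y,x}\P^n(y)$ up to the error $\F^n$-term vanishing, which I address at the end. First I would fix $x$ and $y\in W^s_{loc}(x)$ and estimate the $k$-th summand. Using property ($\H$3) in the form $\H^s_{f^kx,f^ky}=\F^k_y\circ\H^s_{x,y}\circ(\F^k_x)^{-1}$, rewrite the $k$-th term as $(\F^k_x)^{-1}\bigl[\p(f^kx)-\H^s_{f^ky,f^kx}\,\p(f^ky)\bigr]$. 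Now split the bracket into two pieces: $\p(f^kx)-\p(f^ky)$ and $(\Id-\H^s_{f^ky,f^kx})\p(f^ky)$. For the first, $\beta$-Hölder continuity of $\p$ together with the exponential contraction along $W^s$ (i.e. $d(f^kx,f^ky)\le C\nu^k_x\,d(x,y)$ for appropriate notation, or simply $\le C\lambda^k d(x,y)$ with $\lambda<1$) gives a bound $\le K_\beta(\p)\,C^\beta\lambda^{k\beta}d(x,y)^\beta$. For the second, property ($\H$4) applied along the local stable leaf through $f^kx$ gives $\|\Id-\H^s_{f^ky,f^kx}\|\le c\,d(f^kx,f^ky)^\beta\le cC^\beta\lambda^{k\beta}d(x,y)^\beta$, and $\p$ is bounded. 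So the bracket has norm $\le C'\lambda^{k\beta}d(x,y)^\beta$.

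Next, multiply by $(\F^k_x)^{-1}$: since $\F$ is uniformly bounded, $\|(\F^k_x)^{-1}\|\le K$ for all $k$, so the $k$-th summand is bounded by $KC'\lambda^{k\beta}d(x,y)^\beta$. Summing a geometric series in $k$, the partial sums $S^N(x,y)$ converge and $\|S^N(x,y)\|\le\dfrac{KC'}{1-\lambda^\beta}\,d(x,y)^\beta$, uniformly in $x$ and $y\in W^s_{loc}(x)$; setting $K'=KC'(1-\lambda^\beta)^{-1}$ gives the claimed bound. The same geometric-tail estimate shows $S^N$ is Cauchy, so the limit $\P^s_{x,y}$ exists. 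To see that this limit coincides with $\lim_{n\to\infty}(\P^n(x)-\H^s_{y,x}\P^n(y))$, note from the definition of $\P^n$ that $\P^n(x)-\H^s_{y,x}\P^n(y)=\sum_{k=0}^{n-1}[(\F^k_x)^{-1}(\p(f^kx))-\H^s_{y,x}(\F^k_y)^{-1}(\p(f^ky))]$; pulling the bounded operator $\H^s_{y,x}$ through each term (legitimate since it is a fixed linear map from $\E_y$ to $\E_x$) rewrites this as exactly $S^{n-1}(x,y)$, which converges to $\P^s_{x,y}$.

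The main obstacle is getting the bookkeeping of the holonomy cocycle identity right: one must be careful that ($\H$3) is applied along the correct orbit segments and that the identification of fibers $\E_{f^kx}\cong\E_{f^ky}$ used in ($\H$4) is the one furnished by the local stable holonomy, not an ambient trivialization — this is where the bundle setting (Section 2.2 of \cite{KS13}) requires attention. A secondary technical point is that ($\H$4) is stated only for $y\in W^s_{loc}(x)$, so one needs $f^kx$ and $f^ky$ to lie in a single local stable leaf for all $k\ge0$; this holds precisely because $y\in W^s_{loc}(x)$ and stable leaves contract under forward iteration, but it should be noted explicitly. Everything else is a routine geometric-series estimate once the $k$-th term is in the form $(\F^k_x)^{-1}[\p(f^kx)-\H^s_{f^ky,f^kx}\p(f^ky)]$.
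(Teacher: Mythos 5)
Your argument is correct and is essentially the paper's proof: you use ($\H$3) to rewrite the $k$-th term as $(\F^k_x)^{-1}\bigl[\p(f^kx)-\H^s_{f^ky,f^kx}\p(f^ky)\bigr]$, bound the bracket by the H\"older continuity of $\p$ together with ($\H$4) and the contraction along $W^s$, and then use the uniform bound on $\|(\F^k_x)^{-1}\|$ to sum a geometric series. The closing observation that $\P^n(x)-\H^s_{y,x}\P^n(y)$ is exactly the partial sum (so no extra error term appears) matches the paper's computation as well.
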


The result holds if instead of being uniformly bounded $\F$ satisfies the following.
There exist numbers $\theta<1$ and $L$  such that for all $x\in\M$ and $n\in \N$,
$$
\|(\F_x^n)^{-1}\| \cdot  (\nu^n_x)^\beta < L\, \theta^n.
$$

\begin{proof}
For all $x\in \M$ and $y \in W^s_{loc}(x)$ we have $d(f^{k}x,f^{k}y)\le \nu_x^k \,d(x,y)$. As
 $\p$ is $\beta$-H\"older we obtain
 $$
 \|\p(f^{k}x)-  \p(f^{k}y)\|\le K_1(\nu^k_x d(x,y))^\beta,
 $$
 and since $\H_{f^ky,f^kx}^s$ is $\beta$-H\"older close to identity by $(\H 4)$, we have
 $$
 \|\p(f^{k}x)-  \H_{f^ky,f^kx}^s \p(f^{k}y)\|\le K_2(\nu^k_x\, d(x,y))^\beta.
 $$

 By uniformly boundedness of  $\F$ we have $\| (\F_x^k)^{-1} \| \le K$, and by continuity of $\phi$ we have $\sup_x \|\phi(x) \| \le K_3$.
Therefore,
$$
\P^n(x) - \H_{y,x}^s \P^n(y) =
\sum_{k=0}^{n-1}\,(\F^{k}_{x})^{-1}(\p(f^{k}x))-
(\H_{y,x}^s \circ (\F^{k}_{y})^{-1} \circ \H_{f^kx,f^ky}^s) (\H_{f^ky,f^kx}^s\p(f^{k}y))
$$
Since $\H_{y,x}^s \circ (\F^{k}_{y})^{-1} \circ \H_{f^kx,f^ky}^s= (\F^k_x)^{-1}$ by ($\H$3),\, the $k^{th}$ term in the sum equals
  $$
(\F^{k}_{x})^{-1}(\p(f^{k}x))-  (\F^{k}_{x})^{-1}  (\H_{f^ky,f^kx}^s \p(f^{k}y))=
(\F^{k}_{x})^{-1}\,[ \p(f^{k}x)-\H_{f^ky,f^kx}^s \p(f^{k}y)],
$$
and we estimate
   $$
   \begin{aligned}
\|(\F^{k}_{x})^{-1}\,&[  \p(f^{k}x)   -\H_{f^ky,f^kx}^s \p(f^{k}y)] \,\|  \le
 \|(\F^{k}_{x})^{-1} \| \cdot \|\p(f^{k}x)-\H_{f^ky,f^kx}^s \p(f^{k}y)  \| \le \\
 &\|(\F^{k}_{x})^{-1} \| \cdot  K_2(\nu^k_x\, d(x,y))^\beta\le  K K_2\,\theta^{k} d(x,y)^\beta \quad\text{for some }\theta<1.
 \end{aligned}
 $$
Hence the series converges and
$$
\| \P^n(x) - \H_{y,x}^s \P^n(y) \|  \le\, \sum_{k=0}^{n-1}K K_2 \, \theta^{k} d(x,y)^\beta \le K' d(x,y)^\beta,
$$
so the limit $\P_{x,y}^s$  satisfies $ \|\P_{x,y}^s\| \le K' d (x,y)^\beta$.
\end{proof}


\begin{proposition} \label{twist-meas}
Let $\F$ be a  $\beta$-H\"older uniformly bounded cocycle over an Anosov diffeomorphism $f$
(or a hyperbolic system).
Let $\mu$ be an ergodic $f$-invariant  measure on $\M$ with full support and local product structure.

Let $\p  : \M \to \E$ be a $\beta$-H\"older section, and let $\q:\M\to \E$ be a $\mu$-measurable section satisfying \eqref{twisteq}.
Then $\q$ is  $\beta$-H\"older and
$$
\q(x)=\H_{y,x}^s \,\q(y)+\P_{x,y}^s \quad \text{for all $x\in X$ and  $y \in W^s(x)$}.
$$

\end{proposition}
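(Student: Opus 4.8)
The plan is to establish the stated formula first on a set of full measure, using the ergodic properties of $\mu$, and then upgrade it to all $x$ and all $y\in W^s(x)$ by continuity. The starting point is the iterated identity \eqref{twisteq iter}, which after rewriting reads $\q(x)-\P^n(x)=(\F^{n-1}_x)^{-1}(\q(f^nx))$. Since $\F$ is uniformly bounded, $\|(\F^{n-1}_x)^{-1}(\q(f^nx))\|\le K\|\q(f^nx)\|$, so if we can control $\|\q(f^nx)\|$ along an orbit then the partial sums $\P^n(x)$ stay bounded. The measurable section $\q$ is finite $\mu$-almost everywhere, and by Poincaré recurrence (applied, say, to the sets $\{\|\q\|\le R\}$) for $\mu$-a.e.\ $x$ there is a subsequence $n_j\to\infty$ with $\|\q(f^{n_j}x)\|$ bounded; hence $(\F^{n_j-1}_x)^{-1}(\q(f^{n_j}x))\to 0$ along this subsequence is not automatic, but boundedness along $n_j$ forces $\P^{n_j}(x)\to \q(x)-\lim$, which I will exploit together with the pair-comparison from Lemma \ref{twist hol}.

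The key step is the comparison on stable leaves. Fix $x$ and $y\in W^s_{loc}(x)$. Applying \eqref{twisteq iter} at both points and using $(\F^{n-1}_x)^{-1}=\H^s_{y,x}\circ(\F^{n-1}_y)^{-1}\circ\H^s_{f^nx,f^ny}$ from $(\H 3)$, we get
$$
\q(x)-\H^s_{y,x}\q(y) = \big(\P^n(x)-\H^s_{y,x}\P^n(y)\big) + (\F^{n-1}_x)^{-1}\big(\q(f^nx)-\H^s_{f^ny,f^nx}\q(f^ny)\big).
$$
By Lemma \ref{twist hol} the first bracket converges to $\P^s_{x,y}$. For the second term, I would use a Lusin-type argument: $\q$ is uniformly continuous on a compact set $G$ with $\mu(G)>1-\e$, and for $\mu$-a.e.\ $x$ the forward orbit visits $G$ along a density-one set of times $n_j$ with also $f^{n_j}y\in G$ (here one uses that $d(f^nx,f^ny)\to 0$ exponentially, so $f^{n_j}y$ lies in a small neighborhood of $f^{n_j}x\in G$, and a product-structure / absolute-continuity argument on the measure transfers recurrence from $x$ to $y$ — this is where full support and local product structure of $\mu$ enter). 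Along such $n_j$, $\|\q(f^{n_j}x)-\q(f^{n_j}y)\|$ is small, and $\|\H^s_{f^{n_j}y,f^{n_j}x}-\Id\|\le c\,d(f^{n_j}x,f^{n_j}y)^\beta\to0$ by $(\H 4)$, while $\|(\F^{n_j-1}_x)^{-1}\|\le K$; hence the second term tends to $0$ along $n_j$. Therefore $\q(x)=\H^s_{y,x}\q(y)+\P^s_{x,y}$ for $\mu$-a.e.\ $x$ and $\mu$-a.e.\ $y\in W^s_{loc}(x)$, which by the estimate $\|\P^s_{x,y}\|\le K'd(x,y)^\beta$ shows $\q$ is $\beta$-H\"older along local stable leaves (on a full-measure set).

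From here I would run the symmetric argument on unstable leaves — note \eqref{twisteq} can be solved backwards, $\q(fx)=\F_x(\q(x)-\p(x))$, and iterating the inverse direction gives an analogous holonomy formula along $W^u$ using that $\F^{-1}$ satisfies the dual bound — obtaining H\"older dependence along local unstable leaves as well. Combining the two via the local product structure of $\mu$ (every point of a full-measure set is an intersection of a local stable and a local unstable leaf through nearby full-measure points) yields a $\beta$-H\"older section $\tilde\q$ agreeing with $\q$ $\mu$-a.e.; since $\mu$ has full support, $\tilde\q$ is the unique continuous representative, and the identity $\q(x)=\H^s_{y,x}\q(y)+\P^s_{x,y}$, holding on a dense set of pairs and with both sides continuous in $(x,y)$ along stable leaves, extends to all $x\in\M$ and all $y\in W^s(x)$ (for non-local $y$, concatenate using $(\H 2)$ and the cocycle property of $\P^s$). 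The main obstacle is the transfer-of-recurrence step: making precise that for $\mu$-a.e.\ pair $(x,y)$ on a stable leaf the orbit simultaneously returns to the good compact set at both points — this is exactly what local product structure buys, and I would model it on the standard arguments in \cite{S13,S15}.
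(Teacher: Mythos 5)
Your overall skeleton coincides with the paper's argument: the same comparison identity
$\q(x)-\H^s_{y,x}\q(y)=\bigl(\P^n(x)-\H^s_{y,x}\P^n(y)\bigr)+(\F^{n-1}_x)^{-1}\bigl(\q(f^nx)-\H^s_{f^ny,f^nx}\q(f^ny)\bigr)$ obtained from \eqref{twisteq iter} and ($\H$3), Lemma \ref{twist hol} for the first bracket, uniform boundedness of $\F$ plus ($\H$4) and a Lusin set for the second, then the symmetric argument along unstable leaves and the local product structure to conclude H\"older continuity. However, the one step you yourself flag as the main obstacle --- simultaneous returns of $f^nx$ and $f^ny$ to the good compact set --- is exactly where your proposed mechanism does not work as stated. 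Knowing that $f^{n_j}x\in G$ and $d(f^{n_j}x,f^{n_j}y)\to 0$ does not put $f^{n_j}y$ in $G$, and since $\q$ is only uniformly continuous \emph{on} $G$ (not on a neighborhood of $G$), nearness to $G$ gives no control of $\q(f^{n_j}y)$. Likewise, the suggested ``product-structure / absolute-continuity transfer of recurrence from $x$ to $y$'' is not fleshed out and is not needed: the hypotheses give no absolute continuity, and $\mu$-a.e.\ statements with respect to conditional measures on stable leaves would still have to be upgraded to the set of pairs you actually use.

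The paper closes this gap with a much simpler device that your write-up misses: choose the Lusin set $S$ with $\mu(S)>1/2$ (not merely $\mu(S)>1-\e$), and let $Y$ be the full-measure set of points whose orbit visits $S$ with asymptotic frequency $\mu(S)$ (Birkhoff). Then for \emph{any} two points $x,y\in Y$ the two sets of visit times each have density greater than $1/2$, hence intersect along an infinite subsequence $n_i$, giving $f^{n_i}x, f^{n_i}y\in S$ simultaneously; uniform continuity and boundedness of $\q$ on $S$, together with ($\H$4), then force $\Delta_{n_i}'\to 0$. No product structure, density-one times, or recurrence transfer is involved at this stage --- the local product structure of $\mu$ (and of the stable/unstable foliations) is used only at the very end, to pass from essential $\beta$-H\"older regularity along $W^s_{\mathrm{loc}}$ and $W^u_{\mathrm{loc}}$ to $\beta$-H\"older regularity of $\q$. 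Also note that your opening paragraph (Poincar\'e recurrence to bound $\P^{n}(x)$) is not needed once the comparison identity is in place. With the ``measure greater than one half'' trick substituted for your transfer-of-recurrence step, your proof becomes the paper's proof.
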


\begin{proof}
Let $x\in \M$ and $y\in W^s(x)$.
Using equation \eqref{twisteq iter} for $\q(x)$ and $\q(y)$
we obtain
$$
\q(x)-\H_{y,x}^s \,\q(y)=
\P^n(x) - \H_{y,x}^s \P^n(y) + \Delta_n,
$$
where
$$
 \Delta_n= (\F_{f^{n-1}x})^{-1} (\q(f^{n}x)) -\H_{y,x}^s (\F_{f^{n-1}y})^{-1} (\q(f^{n}y)).
$$
By Lemma \ref{twist hol},  $(\P^n(x) - \H_{y,x}^s\P^n(y) )$ converges to $\P_{x,y}^s$.
\vskip.1cm

Now we show that $\|\Delta_n\|\to 0$ along a subsequence for all $x,y$ in a set of full measure.
First we note that
by property ($\H$3) we have
 $\H_{y,x}^s (\F_{f^{n-1}y})^{-1}=(\F_{f^{n-1}x})^{-1} \circ \H_{f^ny,f^nx}^s$. Hence
$$
\Delta_n=
(\F_{f^{n-1}x})^{-1} \left( \q(f^{n}x) -\H_{f^ny,f^nx}^s(\q(f^{n}y)) \right)= (\F_{f^{n-1}x})^{-1} ( \Delta_n'),
$$
where  $\Delta_n'=\q(f^{n}x) -\H_{f^ny,f^nx}^s(\q(f^{n}y))$. By uniform boundedness of $\F$  we obtain
$$
\| \Delta_n\|\le \|(\F_{f^{n-1}x})^{-1} \| \cdot \|\Delta_n'\|\le K  \|\Delta_n'\|.
$$
Since the section $\q:\M\to E$ is $\mu$-measurable, by Lusin's theorem there
exists a compact set $S\subset \M$ with $\mu(S)>1/2$ such that
$\q$ is uniformly continuous and hence bounded on  $S$. Let $Y$ be the set of points in $\M$
for which the frequency of visiting $S$ equals $\mu(S)$.
By Birkhoff Ergodic Theorem, $\mu(Y)=1$.
\vskip.1cm

If $x,y\in Y$, there exists a subsequence $n_i\to \infty$
such that  such that $f^{n_i}x, f^{n_i}y \in S$ for all $i$.
Since  $y \in W^s(x)$, $\,d(f^{n_i}x, f^{n_i}y)\to 0$
 and hence $\Delta_{n_i}' \to 0$ by uniform continuity and boundedness
of $\q$ on $S$ and property ($\H$4) of $\H^s$. Thus $\Delta_{n_i} \to 0$ and we obtain  that
$$
\q(x)=\H_{y,x}^s \,\q(y) +\P_{x,y}^s \quad\text{ for all $x,y\in Y$ with $y \in W^s(x).$}
$$
Since $\P_{x,y}^s$ is $\beta$-H\"older
on $ W^s_{\loc}(x)$ by Lemma \ref{twist hol}, we conclude that
$$
\|\q(x)- \H_{y,x}^s\,\q(y)\| \le K'd(x,y)^\beta\quad\text{ for all $x,y\in Y$ with $y \in W^s(x).$}
$$
Since $\H_{x,y}^s$ is $\beta$-H\"older by  property ($\H$4), this means that $\q$ is
essentially $\beta$-H\"older along $ W^s_{\loc}(x)$.

Similar arguments  for $y \in W^u_{loc}(x)$ show that $\q$ is also essentially $\beta$-H\"older along $ W^u_{loc}(x)$.
Hence $\q$ is $\beta$-H\"older by the local product structure of $\mu$ and of the stable and unstable manifolds.
 \end{proof}


\subsection{Proof of Theorem \ref{measurable conjugacy}} \label{proof of thm measurable}
For convenience, by taking inverse, we will work with a conjugacy $\c$ satisfying
\begin{equation} \label{Cback}
  \B_x=\c(fx)\,\A_x\,\c(x)^{-1} .
\end{equation}
First we observe that since $\lambda_+(\A,\mu)=\lambda_-(\A,\mu)$ and $\B$ is $\mu$-measurably conjugate to $\A$,  the following lemma implies that
$$
\lambda_+(\B,\mu)=\lambda_-(\B,\mu).
$$

\begin{lemma} \label{equal exp}
Let $\mu$ be an ergodic $f$-invariant measure.
If $\c$ is a $\mu$-measurable conjugacy between cocycles $\A$ and $\B$, then for $\mu$ a.e. $x$
and for each vector $0\ne u \in \E_x$ the forward (resp. backward) Lyapunov exponent of $u$
under $\A$ equals that of $\c_x(u)$ under $\B$.
\end{lemma}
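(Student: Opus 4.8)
The plan is to exploit the conjugacy relation $\B_x=\c(fx)\,\A_x\,\c(x)^{-1}$ together with the fact that the measurable function $\c$ is, by Lusin's theorem and the Birkhoff ergodic theorem, ``tempered'' along almost every orbit — that is, $\tfrac1n\ln\|\c(f^n x)\|\to 0$ and $\tfrac1n\ln\|\c(f^n x)^{-1}\|\to 0$ for $\mu$-a.e. $x$. First I would iterate the conjugacy relation to get $\B^n_x=\c(f^n x)\,\A^n_x\,\c(x)^{-1}$ for all $n\ge 1$ (and the analogous identity for negative $n$). Then, for a fixed nonzero $u\in\E_x$, writing $v=\c_x(u)\in\E_x$ (here I am using the convention of \eqref{Cback}, so that $\c$ conjugates $\A$ to $\B$ in the stated direction), we have $\B^n_x(v)=\c(f^n x)\big(\A^n_x(u)\big)$, hence
\begin{equation*}
\frac{1}{n}\ln\|\B^n_x(v)\|=\frac{1}{n}\ln\|\A^n_x(u)\|+\frac{1}{n}\ln\frac{\|\c(f^n x)(\A^n_x(u))\|}{\|\A^n_x(u)\|}.
\end{equation*}
The last term is bounded in absolute value by $\tfrac1n\max\{\ln\|\c(f^n x)\|,\ln\|\c(f^n x)^{-1}\|\}$, which tends to $0$ along the full-measure set where $\c$ is tempered. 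Taking $\limsup$ and $\liminf$ as $n\to\infty$ gives that the forward Lyapunov exponent of $u$ under $\A$ equals that of $v=\c_x(u)$ under $\B$; running the same argument with $n\to-\infty$ handles the backward exponent.

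The one genuine point to justify is the temperedness of $\c$ along orbits. I would argue as follows: by Lusin's theorem pick a compact set $S$ with $\mu(S)>0$ on which $\c$ and $\c^{-1}$ are continuous, hence bounded, say by $M$; then $g(x):=\ln^+\|\c(x)\|+\ln^+\|\c(x)^{-1}\|$ is a measurable function, finite $\mu$-a.e. If $g\in L^1(\mu)$, temperedness is immediate from Birkhoff applied to $g$ (the difference $\tfrac1n(g\circ f^n)$ of Birkhoff averages tends to $0$ a.e. whenever $g\in L^1$). In general $g$ need not be integrable, but one can still conclude: since $\c$ is finite-valued $\mu$-a.e., for $\mu$-a.e. $x$ the orbit $\{f^n x\}$ visits, with positive frequency, the set $S_M=\{g\le M\}$ for some $M$; combined with the subadditivity already used, returning to $S_M$ infinitely often along a density-one sequence of times forces $\tfrac1n g(f^n x)\to 0$ along that sequence, and a standard argument (or directly the Borel--Cantelli / Poincaré recurrence estimate $\mu(g\circ f^n > \varepsilon n \text{ i.o.})=0$, valid because $g<\infty$ a.e.) upgrades this to convergence along the full sequence. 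I expect this temperedness step — making precise that a $\mu$-a.e. finite measurable cocycle-conjugating function grows subexponentially along almost every orbit — to be the main (though standard) obstacle; everything else is a direct manipulation of the iterated conjugacy identity.

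Finally I would record that the statement is symmetric in $\A$ and $\B$ (replacing $\c$ by $\c^{-1}$), so it does not matter which of the two conjugacy conventions one adopts, and that the argument applies verbatim to linear cocycles on a $\beta$-H\"older vector bundle once fibers at $x$ and $f^n x$ are identified in the usual way, since the identification maps are uniformly bounded and contribute nothing to the exponential rate.
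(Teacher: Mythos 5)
Your skeleton (iterate the conjugacy identity, take a Lusin set where $\c$ and $\c^{-1}$ are bounded, use Birkhoff to get returns of positive frequency) is exactly the paper's, but the step your proof actually hinges on --- temperedness along the \emph{full} orbit, i.e. $\tfrac1n\ln\|\c(f^nx)\|\to0$ and $\tfrac1n\ln\|\c(f^nx)^{-1}\|\to0$ for $\mu$-a.e.\ $x$ --- is not established by what you wrote, and the justification you offer is false. The estimate $\mu\bigl(g\circ f^n>\varepsilon n \text{ i.o.}\bigr)=0$ is the first Borel--Cantelli lemma and needs $\sum_n\mu(g>\varepsilon n)<\infty$, i.e. $g\in L^1(\mu)$; a.e.\ finiteness of $g$ is not enough. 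Concretely, in a Bernoulli shift with $g$ a function of the zeroth coordinate having infinite mean, the events $\{g\circ f^n>\varepsilon n\}$ are independent with divergent probability sum, so $\limsup_n\tfrac1n\,g(f^nx)=\infty$ a.e. Likewise there is no ``standard argument'' upgrading the (trivial) convergence along the density-one sequence of returns to $S_M$, where $g\le M$, to convergence along all $n$. So the $\limsup/\liminf$ computation in your displayed identity is not controlled as written.

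There are two ways to close the gap. (i) The paper's route: since the forward and backward Lyapunov exponents of every nonzero vector exist as genuine limits for $\mu$-a.e.\ $x$ (Oseledets), it suffices to evaluate both sides of $n^{-1}\ln\|\B^n_x(\c_x u)\|=n^{-1}\ln\|\c_{f^nx}(\A^n_x u)\|$ along the subsequence $n_i$ with $f^{n_i}x$ in the Lusin set, where $\|\c\|,\|\c^{-1}\|\le K$; no temperedness along the whole orbit is needed. (ii) Alternatively, temperedness does hold here, but for a reason you did not invoke: from the conjugacy relation $\c(fx)=\A_x\,\c(x)\,\B_x^{-1}$ and the fact that the generators and their inverses are continuous on the compact manifold $\M$, the function $\psi=\ln\|\c\circ f\|-\ln\|\c\|$ is bounded; Birkhoff applied to $\psi$ gives that $\tfrac1n\ln\|\c(f^nx)\|=\tfrac1n\ln\|\c(x)\|+\tfrac1n\sum_{k=0}^{n-1}\psi(f^kx)$ converges a.e., while it converges to $0$ in measure because $\mu$ is invariant and $\ln\|\c\|$ is finite a.e.; hence the a.e.\ limit is $0$, and the same works for $\c^{-1}$. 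Either repair makes your argument complete; the first is what the paper does and is the shorter one.
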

\begin{proof}
We fix a set of positive measure $Y\subseteq \M$ such that for some $K$ we have
$\|\c_x\| \le K$ and $\|(\c_x)^{-1}\| \le K$ for all $x\in Y$. Then we choose an $f$-invariant set
of full measure $X \subseteq \M$ such that for every $x \in X$
\begin{itemize}
\item[(i)] the forward and
backward Lyapunov exponents under both $\A$ and $\B$ exist for each non-zero vector $v \in \E_x$, and
\item[(ii)] the frequency of visiting $Y$ under both forward and backward iterates of $f$ equals $\mu(Y)>0$.
\end{itemize}
For every  $x \in X$,\, $0\ne u \in \E_x$, and $n\in \Z$ we have
$$
n^{-1} \ln \| \B^n_x (\c_x(u)) \| = n^{-1} \ln \| \c_{f^nx} (\A^n_x (u)) \|.
$$
The limit of the left hand side as $n \to \infty$ (resp. $n \to -\infty$) is the forward (resp. backward) Lyapunov exponent of $\c_x(u)$ under $\B$. On the other hand, by the choice of $Y$,
the limit of the right hand side along a subsequence $n_i \to \infty$ (resp. $n_i \to -\infty$)
such that $f^{n_i}x \in Y$ equals the forward (resp. backward) Lyapunov exponent of $u$ under $\A$.
\end{proof}

We use the following results from \cite{KS13}. In the three theorems below, $f$ is a transitive
$C^{1+\text{H\"older}}$ Anosov diffeomorphism,  $\A,\B:E\to \E$ are $\beta$-H\"older linear cocycles over $f$,
and $\mu$ is an ergodic $f$-invariant measure with full support and local product structure.

\begin{theorem} \cite[Theorem 3.9]{KS13}\label{reductionH}
Suppose that for every $f$-periodic point $p$ the invariant measure
$\mu_p$ on its orbit satisfies $\lambda_+(\A,\mu_p)=\lambda_-(\A,\mu_p)$.
 Then there  exist a flag of $\beta$-H\"older  $\A$-invariant sub-bundles
\begin{equation} \label{flagH}
\{0\} =U^0 \subset U^1 \subset ...  \subset U^{j-1} \subset U^k = \E
\end{equation}
and $\beta$-H\"older Riemannian metrics on the quotient bundles $U^{i}/U^{i-1}$,
$i=1, ... , k$, so that for some positive $\beta$-H\"older function
$\phi : \M \to \R$ the quotient-cocycles induced by the cocycle
$\phi \A$ on $U^{i}/U^{i-1}$ are isometries.

 \end{theorem}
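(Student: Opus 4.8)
The plan is to construct the flag as the iterated ``bounded part'' filtration of $\A$ after a scalar normalization, and to bootstrap its regularity from measurable to $\beta$-H\"older using the holonomies and the closing lemma.

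First I would normalize. Set $\phi_0=\abs{\det\A}^{-1/N}$, a positive $\beta$-H\"older function, and replace $\A$ by $\A_0:=\phi_0\A$. Rescaling by a scalar changes neither the number of distinct Lyapunov exponents nor the fiber bunching condition, so $\A_0$ still has one exponent for every ergodic measure and, by Proposition \ref{existence of holonomies}, still has stable and unstable holonomies $\H^s,\H^u$; moreover $\abs{\det(\A_0)^n_x}\equiv1$, so that single exponent is $0$ for every ergodic $f$-invariant measure. In the conclusion one takes $\phi=\phi_0$, so it suffices to treat $\A_0$. For $x$ in a full $\mu$-measure set put $\Omega_x=\{v\in\E_x:\sup_{n\in\Z}\norm{(\A_0)^n_x v}<\infty\}$, a linear $\A_0$-invariant subbundle of $\mu$-a.e.\ constant dimension; writing $\Omega_x=\Omega^+_x\cap\Omega^-_x$ with $\Omega^\pm$ the forward/backward bounded subspaces, properties ($\H$3) and ($\H$4) show that $\H^s$ preserves $\Omega^+$ and $\H^u$ preserves $\Omega^-$ on a set of full measure, and the one-exponent hypothesis gives $\Omega^+=\Omega^-=\Omega$ a.e., so $\Omega$ is invariant under both holonomies. (Non-triviality of $\Omega$ is part of what must be proved; it follows once $\Omega$ is known to be continuous, since at a periodic $p$ of period $m$ the matrix $(\A_0)^m_p$ has all eigenvalues of modulus $1$ and hence a nonzero subspace of bounded directions, via its multiplicative Jordan decomposition $S_pU_p$.)

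The key step is to promote $\Omega$ to a $\beta$-H\"older subbundle. A $\mu$-measurable, $\A_0$-invariant subbundle of a fiber bunched cocycle that is invariant under both $\H^s$ and $\H^u$ on a full-measure set coincides $\mu$-a.e.\ with a $\beta$-H\"older $\A_0$-invariant one; this is where the full support and local product structure of $\mu$ enter, through the Anosov closing lemma and the standard density argument in this circle of ideas (cf.\ \cite{KS13,S15} and Theorem 1.4 of \cite{K11}). This produces a $\beta$-H\"older $\A_0$-invariant subbundle $U^1$ agreeing with $\Omega$ on the support of $\mu$. The induced cocycle on the $\beta$-H\"older quotient bundle $\E/U^1$ again has one exponent $0$ for every ergodic measure, hence is fiber bunched with its own holonomies; applying the same construction and pulling back to $\E$ gives $U^1\subset U^2$, and iterating---the dimension strictly decreases---produces the flag $\{0\}=U^0\subset U^1\subset\dots\subset U^k=\E$ of $\beta$-H\"older $\A_0$-invariant subbundles. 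By construction the induced cocycle $\A_0^{Q_i}$ on each quotient $Q_i=U^i/U^{i-1}$ has only bounded orbits, and using continuity and the holonomies this boundedness is uniform, so $\A_0^{Q_i}$ is uniformly quasiconformal. Such a cocycle preserves a $\beta$-H\"older conformal structure, obtained via holonomy arguments as in the study of uniformly quasiconformal cocycles (cf.\ \cite{S15}): there are a $\beta$-H\"older metric $g^i$ on $Q_i$ and a positive $\beta$-H\"older $c_i$ with $\norm{\A_0^{Q_i}(v)}_{g^i}=c_i(x)\norm{v}_{g^i}$. Boundedness of $\A_0^{Q_i}$ forces $\prod_{j=0}^{m-1}c_i(f^jp)=1$ for every periodic $p$ of period $m$, so $\log c_i$ has zero periodic data and, by the Liv\v{s}ic theorem for H\"older functions over transitive Anosov systems, $\log c_i=\psi_i\circ f-\psi_i$ for a $\beta$-H\"older $\psi_i$; replacing $g^i$ by $e^{-2\psi_i}g^i$ turns $\A_0^{Q_i}$ into a $g^i$-isometry. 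With $\phi=\phi_0$ this finishes the proof.

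The main obstacle is the measurable-to-H\"older upgrade, together with the non-triviality of $U^1$. One must show that the bounded-part subbundle, defined by $\limsup$-type conditions, is genuinely invariant under both holonomies and depends H\"older-continuously on the base point; the polynomial, Jordan-block, growth makes the estimates more delicate than in the conformal $GL(2,\R)$ setting of \cite{S13}, and it is exactly the interplay of $\H^s$, $\H^u$, the local product structure of $\mu$, and the closing lemma that forces continuity. By contrast, the initial normalization and the final passage from a conformal structure to an honest isometry via Liv\v{s}ic are routine.
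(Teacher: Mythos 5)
First, a remark on the target: the paper does not prove Theorem \ref{reductionH} at all — it is quoted from \cite[Theorem 3.9]{KS13} — so your argument can only be compared with that proof. Your overall architecture (produce a measurable $\A$-invariant flag, upgrade it to $\beta$-H\"older using the continuity theorems for fiber bunched cocycles with one exponent — quoted in this paper as Theorems \ref{structure} and \ref{distribution} — and then make the quotient cocycles isometric via an invariant conformal structure together with the Liv\v{s}ic theorem) is the right one, and your final steps essentially coincide with the known proof.

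The genuine gap is the non-triviality, and exhaustiveness, of your bounded-orbit subbundle $\Omega$. Your parenthetical justification is circular: if $\Omega$ has rank zero $\mu$-a.e., then the measurable-to-H\"older upgrade has nothing to upgrade, and the computation at a periodic point $p$ cannot rescue you, because the orbit of $p$ has $\mu$-measure zero and the defining condition $\sup_{n}\|(\A_0)^n_x v\|<\infty$ is neither open nor closed, so the fiber at $p$ of an only a.e.-defined bundle bears no a priori relation to the bounded directions of $(\A_0)^m_p$. The same problem recurs at every stage of the induction: to end with $U^k=\E$ you need the bounded subbundle of each successive quotient to be nonzero, and nothing you wrote establishes this even once. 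Non-triviality is in fact true, but every proof I know passes through essentially the full strength of the theorem (or an invariance-principle/amenable-reduction argument), so it cannot be dismissed as a detail. Two further claims are asserted without proof: that $\Omega^+=\Omega^-$ a.e. (this one you could sidestep entirely by applying Theorem \ref{distribution} directly to the measurable invariant bundle $\Omega$, since that theorem requires no holonomy-invariance hypothesis), and that a.e.-pointwise boundedness of the quotient cocycle upgrades to uniform quasiconformality (true, but it needs the conformal-structure-plus-Liv\v{s}ic argument that you only invoke afterwards). For comparison, the proof in \cite{KS13} avoids the non-triviality issue altogether: it starts from Zimmer's amenable reduction, which always supplies a nontrivial measurable invariant flag with measurable invariant conformal structures on the quotients, and then upgrades these to H\"older objects by Theorems \ref{structure} and \ref{distribution} and normalizes by Liv\v{s}ic, exactly as in the last part of your sketch.
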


\begin{theorem} \cite[Theorem 3.1 and Corollary 3.8]{KS13} \label{structure}
If $\B$ is fiber bunched, then any $\B$-invariant $\mu$-measurable
conformal structure on $\E$  coincides $\mu$-a.e. with a H\"older continuous
conformal structure.
\end{theorem}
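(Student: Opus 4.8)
The plan is to interpret conformal structures as measurable sections of a bundle of nonpositively curved symmetric spaces on which $\B$ and its holonomies act by fiberwise isometries, to show that any $\B$-invariant such section is essentially invariant under the stable and unstable holonomies of $\B$, and then to deduce $\beta$-H\"older continuity from the local product structure of $\mu$. Concretely, fixing a background $\beta$-H\"older metric on $\E$, the conformal structures on a fiber $\E_x$ (inner products up to positive scaling) form the symmetric space $S_x=GL(\E_x)/(\R^{>0}\cdot O(\E_x))$, equipped with a canonical distance $d_S$ for which the $GL(\E_x)$-action is isometric; these assemble into a $\beta$-H\"older bundle, and a $\B$-invariant $\mu$-measurable conformal structure is a measurable section $\sigma$ with $\B_x\cdot\sigma(x)=\sigma(fx)$. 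Since $\B$ is fiber bunched, Proposition \ref{existence of holonomies} provides $\beta$-H\"older stable and unstable holonomies $\H^{\B,s}_{x,y},\H^{\B,u}_{x,y}$ with properties $(\H 1)$--$(\H 4)$, which again act isometrically on the symmetric-space fibers.

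For the holonomy invariance, fix $y\in W^s(x)$ and consider the defect $\delta^s(x,y)=d_S\big(\sigma(y),\,\H^{\B,s}_{x,y}\cdot\sigma(x)\big)$. Iterating $\B$-invariance gives $\sigma(f^nx)=\B^n_x\cdot\sigma(x)$, and property $(\H 3)$ applied to $\B$ gives $\H^{\B,s}_{f^nx,f^ny}\circ\B^n_x=\B^n_y\circ\H^{\B,s}_{x,y}$; since $\B^n_y$ is an isometry of $S_{f^ny}$, these identities combine to yield $\delta^s(x,y)=\delta^s(f^nx,f^ny)$ for all $n\ge 0$. One then runs the Lusin--Birkhoff argument from the proofs of Proposition \ref{twist-meas} and Lemma \ref{equal exp}: choose a compact $S$ with $\mu(S)>1/2$ on which $\sigma$ is uniformly continuous and $\sigma(S)$ is bounded, and the full-measure set $Y$ of points whose forward orbit visits $S$ with frequency $\mu(S)$; for $x,y\in Y$ with $y\in W^s(x)$, pick $n_i\to\infty$ with $f^{n_i}x,f^{n_i}y\in S$, and note that $d(f^{n_i}x,f^{n_i}y)\to 0$ forces $\H^{\B,s}_{f^{n_i}x,f^{n_i}y}\to\Id$ by $(\H 4)$ while $\sigma(f^{n_i}x)$ and $\sigma(f^{n_i}y)$, which lie in the compact set $\sigma(S)$, become arbitrarily close. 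Hence $\delta^s(f^{n_i}x,f^{n_i}y)\to 0$, and therefore $\delta^s(x,y)=0$; thus $\sigma(y)=\H^{\B,s}_{x,y}\cdot\sigma(x)$ for all $x,y$ in a set of full measure with $y\in W^s(x)$, and the same argument with backward iterates gives the analogous identity along $W^u$.

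It remains to upgrade essential holonomy invariance to $\beta$-H\"older continuity. Fix a density point $p$ of the full-measure set where both holonomy identities hold along leaves; transporting $\sigma(p)$ first along $W^s_{\loc}(p)$ by $\H^{\B,s}$ and then along local unstable leaves by $\H^{\B,u}$ defines a section $\hat\sigma$ on a product neighborhood of $p$ that is $\beta$-H\"older, because the holonomies are $d(\cdot,\cdot)^\beta$-close to the identity by $(\H 4)$ and depend $\beta$-H\"older-continuously on their base points. Using the local product structure of $\mu$ --- so that for a.e. $x$, a.e. point of $W^s_{\loc}(x)$ and a.e. point of $W^u_{\loc}(x)$ belong to the good set --- together with $(\H 2)$--$(\H 3)$, one checks that $\hat\sigma=\sigma$ $\mu$-a.e. near $p$; transitivity of $f$ and the $\B$-equivariance of holonomies then allow patching these local sections into a globally defined $\beta$-H\"older $\B$-invariant conformal structure that equals $\sigma$ $\mu$-a.e.

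I expect the main obstacle to be the holonomy-invariance step. One must check that $\delta^s$ is genuinely $f$-invariant and finite despite the noncompact target $S_x$, and, more delicately, organize the Lusin--Birkhoff recurrence so that the resulting identities hold for a set of pairs that is large with respect to the conditional measures along both the stable and the unstable leaves, which is exactly what makes the final patching via the local product structure go through. The passage to a continuous representative is then essentially bookkeeping with properties $(\H 2)$--$(\H 4)$ and the full support of $\mu$.
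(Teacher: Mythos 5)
This statement is not proved in the paper at all: it is quoted verbatim from \cite{KS13} (Theorem 3.1 and Corollary 3.8) and used as an external ingredient in the proof of Theorem \ref{measurable conjugacy}. Your proposal essentially reconstructs the argument of that cited source — viewing conformal structures as sections of a bundle of nonpositively curved symmetric spaces on which the cocycle and its fiber-bunching holonomies act isometrically, proving essential holonomy invariance by the same Lusin--Birkhoff recurrence device the paper uses in Proposition \ref{twist-meas} and Lemma \ref{equal exp}, and then upgrading to a $\beta$-H\"older version via the local product structure of $\mu$ and property ($\H$4) — and the outline is sound, with the only soft spot being the final patching/globalization step, which is more cleanly phrased as showing that the essentially holonomy-invariant section admits a uniformly $\beta$-H\"older representative on $\operatorname{supp}\mu=\M$ rather than by transitivity-based patching.
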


 If a cocycle has more than one Lyapunov exponent,
then the corresponding Lyapunov sub-bundles are
invariant and measurable, but not continuous in general.
For a fiber bunched cocycle with only one Lyapunov
exponent, measurable invariant sub-bundles are continuous.

\begin{theorem}  \cite[Theorem 3.3 and Corollary 3.8]{KS13}\label{distribution}
Suppose that $\B$\, is fiber bunched and $\lambda_+(\B,\mu)=\lambda_-(\B,\mu)$.
Then any $\mu$-measurable $\B$-invariant sub-bundle of $\EE$  coincides $\mu$-a.e. with a H\"older continuous one.
\end{theorem}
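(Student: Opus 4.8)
**Proof plan for Theorem \ref{distribution} (continuity of measurable invariant sub-bundles for a fiber bunched cocycle with one exponent).**

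The plan is to reduce the statement to Theorem \ref{structure} by encoding a measurable invariant sub-bundle as a measurable invariant conformal structure, so that the already-proven regularity result for conformal structures applies. First I would use the hypothesis $\lambda_+(\B,\mu)=\lambda_-(\B,\mu)$ together with the fiber bunching of $\B$ to put $\B$ in a normalized form: by Theorem \ref{reductionH} (applied to $\B$, whose periodic data all have coinciding top and bottom exponents since it has one exponent over every ergodic measure) there is a positive $\beta$-H\"older function $\phi$ and a $\beta$-H\"older $\B$-invariant flag $\{0\}=U^0\subset U^1\subset\cdots\subset U^k=\EE$ such that the quotient cocycles induced by $\phi\B$ on $U^i/U^{i-1}$ are isometries for $\beta$-H\"older Riemannian metrics. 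The key point is that a fiber bunched cocycle with a single Lyapunov exponent has, in fact, a trivial flag: if $k\ge 2$, the sub-bundle $U^1$ would be a proper continuous $\B$-invariant sub-bundle, and comparing the norm growth on $U^1$ with that on $\EE/U^1$ one sees that the isometry property of $\phi\B$ on each graded piece forces equal growth, but then the flag can be refined or split; more to the point, one shows directly that the whole bundle carries a $\beta$-H\"older $\B$-invariant conformal structure $\tau_0$ — namely the one coming from the $\beta$-H\"older metric for which $\phi\B$ is an isometry on all of $\EE$. (This is exactly the mechanism behind ``$\B$ with one exponent is, up to H\"older conjugacy and a scalar, conformal'', which is implicit in \cite{KS13}.)

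Next, given a $\mu$-measurable $\B$-invariant sub-bundle $L\subset\EE$ of fiber dimension $\ell$, I would build from it a $\mu$-measurable $\B$-invariant conformal structure and invoke Theorem \ref{structure}. Working in the $\beta$-H\"older conformal structure $\tau_0$ on which $\B$ acts conformally, the orthogonal complement $L^\perp$ (with respect to any representative metric in the conformal class $\tau_0$) is also measurable; since $\B$ acts conformally it preserves $\tau_0$-orthogonality, so $L^\perp$ is $\B$-invariant as well, and we get a measurable $\B$-invariant splitting $\EE=L\oplus L^\perp$. Define a new measurable metric (hence conformal structure) $\tau$ on $\EE$ by taking a representative of $\tau_0$ on each summand but rescaling the metric on $L$ by a fixed factor $\lambda\ne 1$, and declaring $L\perp L^\perp$. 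Because $\B$ is conformal for $\tau_0$ and preserves the splitting, $\B$ multiplies the $\tau_0$-metric on each fiber by a single scalar $c(x)$ independent of the summand; hence $\B$ scales $\tau$ the same way on both summands, i.e. $\B$ is conformal for $\tau$ too — so $\tau$ is a $\mu$-measurable $\B$-invariant conformal structure. By Theorem \ref{structure}, $\tau$ agrees $\mu$-a.e.\ with a H\"older continuous conformal structure $\tilde\tau$. Comparing $\tilde\tau$ with the known H\"older structure $\tau_0$: the ratio of the two conformal structures has exactly two eigenvalues ($1$ on $L^\perp$ and $\lambda^2$ on $L$, up to normalization), and the eigenspace decomposition of a H\"older continuous field of symmetric forms with constant-multiplicity eigenvalues is H\"older continuous; thus $L$, being the $\lambda^2$-eigenbundle of $\tilde\tau$ relative to $\tau_0$ on a full-measure set, coincides $\mu$-a.e.\ with a H\"older continuous sub-bundle. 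That is the conclusion.

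The main obstacle I expect is the first step: verifying that a fiber bunched cocycle with one Lyapunov exponent is genuinely conformal (up to a H\"older scalar and a H\"older change of metric) over all of $\EE$, not merely that its graded pieces in the flag of Theorem \ref{reductionH} are isometries. One must rule out a non-split flag with $k\ge 2$ — a priori $\phi\B$ could be ``block upper triangular unipotent'' rather than block diagonal — and show that the measurable invariance forces the off-diagonal holonomy-type terms to be absorbed, yielding an honest $\B$-invariant conformal structure. In \cite{KS13} this is handled via the interplay between fiber bunching (which gives stable and unstable holonomies) and the one-exponent condition (which, by the measurable-to-continuous rigidity for the Lyapunov flag, Theorem \ref{distribution} itself in its continuous incarnation, collapses the flag); so some care is needed to arrange the logical order and quote the correct statements from \cite{KS13} without circularity. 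Once conformality of $\B$ is in hand, the remaining construction — twisting the conformal structure by a scalar on the $\B$-invariant splitting determined by $L$, then applying Theorem \ref{structure} and peeling off the eigenbundle — is routine linear-algebra-over-a-H\"older-base.
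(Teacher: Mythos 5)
There is a genuine gap, and in fact the statement is not proved in this paper at all: Theorem \ref{distribution} is imported verbatim from \cite[Theorem 3.3 and Corollary 3.8]{KS13}, so the only question is whether your argument would constitute a valid independent proof. It would not, for two reasons. First, your opening move applies Theorem \ref{reductionH} to $\B$, but that theorem requires $\lambda_+(\B,\mu_p)=\lambda_-(\B,\mu_p)$ at \emph{every} periodic orbit, whereas the hypothesis of Theorem \ref{distribution} is only $\lambda_+(\B,\mu)=\lambda_-(\B,\mu)$ for the single measure $\mu$. Equality of exponents for one measure (even one with full support and local product structure) does not give the periodic-orbit condition, and in the very place this paper invokes Theorem \ref{distribution} --- the inductive construction in the proof of Theorem \ref{measurable conjugacy} --- the cocycle $\B$ is only known, via Lemma \ref{equal exp}, to satisfy the $\mu$-condition. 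So your reduction is not available under the stated hypotheses.

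Second, and more fundamentally, the step you yourself flag as the main obstacle is false, not merely delicate: a fiber bunched cocycle with one exponent need \emph{not} preserve any (measurable or H\"older) conformal structure on all of $\EE$, so the flag of Theorem \ref{reductionH} cannot in general be collapsed. A constant cocycle generated by a single nontrivial Jordan block (eigenvalue of modulus $1$, say) has one exponent and is fiber bunched, yet is not conjugate to a conformal matrix; its invariant flag is genuinely non-split and only the graded quotients are isometric. Since your entire construction --- taking $\tau_0$-orthogonal complements, rescaling on $L$, and invoking Theorem \ref{structure} --- hinges on $\B$ being conformal on the whole bundle, the argument collapses at this point; there is also the circularity you note, since the flag-rigidity used in \cite{KS13} is essentially the statement being proved. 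The actual proof in \cite{KS13} takes a different route: fiber bunching provides stable and unstable holonomies, the one-exponent condition with respect to $\mu$ is used to show that a $\mu$-measurable $\B$-invariant sub-bundle is a.e.\ invariant under these holonomies (controlling the distortion $\|\B^n_x\|\cdot\|(\B^n_x)^{-1}\|$ along returns to a Lusin set, much as in the proof of Proposition \ref{twist-meas} here), and then H\"older continuity follows from the holonomy invariance together with the local product structure of $\mu$ --- no conformal structure on the full bundle is needed.
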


We consider the flag $U^i$ for $\A$ given by Theorem \ref{reductionH}.
Denoting $\calU^i_x=\c(x) U^i_x$ we obtain  the corresponding
flag of measurable $\B$-invariant  sub-bundles
$$
\{0\} =\calU^0 \subset \calU^1\subset \calU^2 \subset \dots \subset \calU^k=\E.
$$
By Theorem \ref{distribution} we may assume that the sub-bundles $\calU^i$ are H\"older continuous.

The conformal structure $\sigma_1$ on $E^1$ given by the Riemannian metric in Theorem \ref{reductionH}
is invariant under $\A$. The push forward of $\sigma_1$  by $\c$ gives a measurable $\B$-invariant conformal structure $\tau_1$ on $\calU^1$, which is H\"older continuous by Theorem \ref{structure}.

 Similarly, we consider H\"older continuous quotient-bundles $\U^i=U^i/U^{i-1}$
and $\UU^i=\calU^i/\calU^{i-1}$ over $\M$ with the quotient cocycles $\A^{(i)}$ and $\B^{(i)}$.
Since $\A^{(i)}$  preserves a H\"older continuous conformal structure $\sigma_i$ on $\U^i$, pushing
forward by $\c$ we obtain a measurable conformal structure $\tau_i$ on $\calU^i/\calU^{i-1}$ invariant
under $\B^{(i)}$, which is H\"older continuous by Theorem \ref{structure}.
Thus we obtain a ``similar structure" for $\B$.

We fix a $\beta$-H\"older Riemannian metric on $\E$. We denote by $\V^i$  the orthogonal
complement of $U^{i-1}$ in $\E_i$, and we denote by  $\VV^i$ the orthogonal
complement of $\calU^{i-1}$ in $\calU^i$, $i=1,\dots,k$. Thus
$U^i=\V^1 \oplus \cdots \oplus \V^i$ and  $\calU^i=\VV^1 \oplus \cdots \oplus \VV^i$.
All these sub-bundles are H\"older continuous but for $i>1$ they are not invariant under $\A$ and $\B$,
and $\c$ does not necessarily map $\V^i$ to $\VV^i$.

 We denote by $P^{j}:\E \to \V^j$ the projection to the $\V^j$ component in the splitting
$\E=\V^1 \oplus \cdots \oplus \V^k$ and similarly $\PP^{j}:\EE \to \VV^j$.

We denote the restriction of $\c$ to $\V^i$ by $\c^i$ and we denote by $\c^{j,i}$ its $j$-component
 $\c^{j,i}=\PP^{j} \circ \c^i : \V^i \to \VV^j$.
 Since $\calU^i_x=\c(x) U^i_x$, we have $\c^i :\V^i \to \calU^i$ and thus $\c^{j,i}=0$ for $j>i$,
that is $\c$ has an upper triangular block structure.

We also define the corresponding blocks $\A^{j,i} : \V^i \to \V^j$ and $\B^{j,i} : \VV^i \to \VV^j$
as $\A^{j,i} =P^j\circ \A |_{ \V^i}$ and similarly for $\B$. The invariance of the flags also yields
 upper triangular block structures for $\A$ and $\B$: $\A^{j,i}=0=\B^{j,i}$ for $j>i$.

We will show inductively that the restriction of $\c$ to $U^i$ is H\"older continuous, $i=1,\dots,k$.
The base case $i=1$ follows from the following result from \cite{S15}.

\begin{theorem} \cite[Theorem 2.7]{S15} \label{QC}
Let $\A,\B:E\to \E$ be $\beta$-H\"older linear cocycles over a hyperbolic system.
Suppose that $\A$ uniformly quasiconformal  and $\B$ is fiber bunched.
Let  $\mu$ be an ergodic invariant  measure with full support and local product structure.
Then any $\mu$-measurable conjugacy between $\A$ and $\B$ is $\beta$-H\"older continuous,
i.e. it coincides with a $\beta$-H\"older continuous conjugacy on a set of full measure.
\end{theorem}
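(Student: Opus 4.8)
\textbf{Proof proposal for Theorem \ref{QC}.}
The plan is to show that the measurable conjugacy $\c$ coincides $\mu$-almost everywhere with a function that intertwines the stable and unstable holonomies of $\A$ and $\B$, and then to upgrade such a function to a $\beta$-H\"older continuous conjugacy by the standard argument using local product structure. First I would record the easy fact that a uniformly quasiconformal $\beta$-H\"older cocycle $\A$ is fiber bunched: the ratio $\|\A^n_x\|\,\|(\A^n_x)^{-1}\|$ is bounded by the quasiconformality constant $K$, while $(\nu^n_x)^\beta$ and $(\hat\nu^{-n}_x)^\beta$ decay exponentially, so \eqref{fiber bunched} holds. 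Hence $\A$ has stable and unstable holonomies $\H^{\A,s},\H^{\A,u}$ by Proposition \ref{existence of holonomies}, and by hypothesis $\B$ has $\H^{\B,s},\H^{\B,u}$. Working with the form $\B_x=\c(fx)\,\A_x\,\c(x)^{-1}$, iteration gives the two pointwise identities $\c(x)=(\B^n_x)^{-1}\,\c(f^nx)\,\A^n_x$ and $(\B^n_x)^{-1}=\c(x)\,(\A^n_x)^{-1}\,\c(f^nx)^{-1}$, which I will use repeatedly.

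The crux is to prove $\c(x)\,\H^{\A,s}_{y,x}=\H^{\B,s}_{y,x}\,\c(y)$ for all $x,y$ in a fixed full-measure set with $y\in W^s_{\loc}(x)$, and the symmetric statement along unstable leaves. Put $D(x,y)=\H^{\B,s}_{y,x}\,\c(y)-\c(x)\,\H^{\A,s}_{y,x}\colon\E_y\to\E_x$. Applying property $(\H3)$ to both cocycles and using the identities above, a short computation gives the renormalization relation
\[
D(x,y)=(\B^n_x)^{-1}\circ D(f^nx,f^ny)\circ\A^n_y,\qquad n\in\N .
\]
Fix a Lusin compact set $S$ with $\mu(S)>1/2$ on which $\c$ is bounded and uniformly continuous. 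For a Birkhoff-generic $x$ and, by the local product structure, a.e.\ generic $y\in W^s_{\loc}(x)$, the visit-time sets $\{n:f^nx\in S\}$ and $\{n:f^ny\in S\}$ both have density exceeding $1/2$, hence a common subsequence $n_i\to\infty$ with $f^{n_i}x,f^{n_i}y\in S$. Along it $\|D(f^{n_i}x,f^{n_i}y)\|\to 0$, since $d(f^{n_i}x,f^{n_i}y)\to0$, the holonomies converge to the identity by $(\H4)$, and $\c$ is uniformly continuous on $S$; on the other hand the holonomy bound $\|\A^n_y\|\le C_1\|\A^n_x\|$ for $y\in W^s_{\loc}(x)$, together with $(\B^n_x)^{-1}=\c(x)(\A^n_x)^{-1}\c(f^nx)^{-1}$ and the quasiconformality bound $\|(\A^n_x)^{-1}\|\,\|\A^n_x\|\le K$, gives $\|(\B^n_x)^{-1}\|\,\|\A^n_y\|\le C_1K\,\|\c(x)\|\,\|\c(f^nx)^{-1}\|$, which stays bounded along $n_i$ because $\c^{-1}$ is bounded on $S$. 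Taking $n=n_i\to\infty$ in the renormalization relation forces $D(x,y)=0$. The same argument with backward iterates and unstable leaves yields $\c(x)\,\H^{\A,u}_{y,x}=\H^{\B,u}_{y,x}\,\c(y)$.

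For the final step, fix a generic $x_0$ in the full-measure set $Y$ on which both intertwining relations hold, and for $x$ in a small ball $B\ni x_0$ let $z(x)=W^s_{\loc}(x_0)\cap W^u_{\loc}(x)$ and define
\[
\bar\c(x)=\H^{\B,u}_{z(x),x}\circ\H^{\B,s}_{x_0,z(x)}\circ\c(x_0)\circ\H^{\A,s}_{z(x),x_0}\circ\H^{\A,u}_{x,z(x)}\ \in\ \GL(\E_x).
\]
Because $z(\cdot)$ and the holonomies are $\beta$-H\"older in their endpoints and $\c(x_0)$ is constant, $\bar\c$ is $\beta$-H\"older on $B$. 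A Fubini argument with the local product structure shows that for $\mu$-a.e.\ $x\in B$ both $x$ and $z(x)$ lie in $Y$; for such $x$, combining the unstable intertwining between $x$ and $z(x)$ with the stable intertwining between $z(x)$ and $x_0$ yields $\c(x)=\bar\c(x)$. Thus $\c$ equals $\bar\c$ $\mu$-a.e.\ on $B$; covering $\M$ by finitely many such balls and using that $\mu$ has full support (so two continuous maps that agree $\mu$-a.e.\ on an open overlap agree there identically), the local versions glue to a single $\beta$-H\"older continuous $\bar\c\colon\M\to\GL(\E)$ equal to $\c$ $\mu$-a.e., and the relation $\B_x\bar\c(x)=\bar\c(fx)\A_x$, valid $\mu$-a.e., extends to all $x$ by continuity. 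I expect the main obstacle to be the renormalization estimate in the crux step: it is exactly uniform quasiconformality of $\A$ that keeps $\|(\A^n_x)^{-1}\|\,\|\A^n_x\|$—and hence $\|(\B^n_x)^{-1}\|\,\|\A^n_y\|$ along the recurrence times—bounded, and without it this product grows and, consistently with the counterexample noted in the introduction for two fiber-bunched cocycles, the conclusion can fail; the other delicate point is the measure-theoretic device of choosing $\mu(S)>1/2$ so as to obtain a common Birkhoff-recurrence subsequence for $x$ and for a generic $y$ on its stable leaf.
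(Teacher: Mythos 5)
The paper does not prove Theorem \ref{QC} at all --- it is quoted from \cite[Theorem 2.7]{S15} --- and your argument is correct and follows essentially the same route as that source and as the paper's own closely parallel arguments (the Lusin-set/Birkhoff recurrence renormalization in Proposition \ref{twist-meas}, the holonomy-intertwining identity \eqref{intertwines2}, and the product-structure gluing): you show $\c$ intertwines the stable and unstable holonomies of $\A$ and $\B$ on a full-measure set, with uniform quasiconformality of $\A$ exactly what keeps $\|(\B^n_x)^{-1}\|\,\|\A^n_y\|$ bounded along return times, and then reconstruct a H\"older version via holonomies from a reference point. The only refinements worth recording are that the Lusin set $S$ should be chosen so that both $\c$ and $\c^{-1}$ are bounded and uniformly continuous on it, and that in the gluing step the reference point $x_0$ should be picked (via the Fubini decomposition of the product chart) so that its local stable leaf meets the generic set $Y$ in a set of full conditional measure, a standard sharpening of ``fix a generic $x_0$''.
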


 Now we describe the inductive step. Assuming that the restriction of $\c$ to $U^{i-1}$ is $\beta$-H\"older
continuous we show that so is the restriction to $U^{i}$. Since $U^i=\V^i \oplus U^{i-1}$,
it suffices to show that the  restriction $\c^{i}$ of $\c$ to $\V^{i}$ is  also $\beta$-H\"older continuous.
We will establish this inductively for each of its components $\c^{j,i}$, $j=i,\dots ,1$.

First we observe that $\c^{i,i}$ is H\"older continuous for all  $i=1,\dots,k$. For this we identify bundles $\V^i$ with $\U^i$
and $\VV^i$ with $\UU^i$ via the projections. Under these identifications the cocycle
$\A^{i,i}:\V^{i,i} \to \V^{i,i}$ corresponds to the quotient cocycle $\A^{(i)}$, the cocycle
$\B^{i,i}:\VV^{i,i} \to \VV^{i,i}$ corresponds to  $\B^{(i)}$, and the map $\c^{i,i}$ corresponds to the
quotient measurable conjugacy $\c^{(i)}$ between  $\A^{(i)}$ and $\B^{(i)}$.
Since the quotient cocycles $\A^{(i)}$ and $\B^{(i)}$ are conformal, Theorem \ref{QC} shows that
 $\c^{(i)}$ is $\beta$-H\"older continuous, and hence so is $\c^{i,i}$.

Now we show that $\c^{i-\ell,i}$ is $\beta$-H\"older assuming that $\c^{i-j,i}$  is $\beta$-H\"older for $j=0,1,\dots \ell-1$.
Using the conjugacy equation
$$
\B_x \circ \c_x =\c_{fx}  \circ \A_x
$$
and equating $(i-\ell,i)$ components we obtain
$$
\begin{aligned}
&\B^{i-\ell,i-\ell}_x \circ \c^{i-\ell,i}_x + \B^{i-\ell,i-\ell+1}_x \circ \c^{i-\ell+1,i}_x + \dots +\B^{i-\ell,i}_x \circ \c^{i,i}_x \\
&
=\c^{i-\ell,i-\ell}_{fx}  \circ \A^{i-\ell+1,i}_x + \c^{i-\ell,i-\ell+1}_{fx}  \circ \A^{i-\ell+1,i}_x + \dots +\c^{i-\ell,i}_{fx}  \circ \A^{i,i}_x
\end{aligned}
$$
and hence
\begin{equation} \label{CD}
\c^{i-\ell,i}_x  = (\B^{i-\ell,i-\ell}_x)^{-1} \circ \c^{i-\ell,i}_{fx}  \circ \A^{i,i}_x \, + D_x
\end{equation}
where
$$
\begin{aligned}
D_x & = (\B^{i-\ell,i-\ell}_x)^{-1} \circ(\c^{i-\ell,i-\ell}_{fx}  \circ \A^{i-\ell+1,i}_x + \dots +\c^{i-\ell,i-1}_{fx} \circ \A^{i-1,i}_x )- \\
&-(\B^{i-\ell,i-\ell}_x)^{-1} \circ ( \B^{i-\ell,i-\ell+1}_x \circ \c^{i-\ell+1,i}_x + \dots +\B^{i-\ell,i}_x \circ \c^{i,i}_x ).
\end{aligned}
$$
We view $\c^{i-\ell,i}_x$ and $D_x$ as sections of the H\"older bundle $L(\V^i,\VV^{i-\ell})$
whose fiber at $x$ is the space of linear maps $L(\V^i_x,\VV^{i-\ell}_x)$.
Thus equation \eqref{CD} is of the form \eqref{twisteq}
with
$$
E=L(\V^i,\VV^{i-\ell}), \;\;\p _x =D_x,\;\; \eta_x=\c^{i-\ell,i}_x, \;\;\text{and}\;\; \F_x(\q_{fx})=  (\B^{i-\ell,i-\ell}_x)^{-1} \circ \eta_{fx}  \circ \A^{i,i}_x.
$$
We note that $D_x$ is $\beta$-H\"older since we inductively know that all its terms are $\beta$-H\"older.
Also $\F$ is a linear cocycle on the bundle $L(\V^i,\VV^{i-\ell})$ over $f^{-1}$, and it is  $\beta$-H\"older since so are
$\B^{i-\ell,i-\ell}$ and $ \A^{i,i}$. Moreover, $\F$ is uniformly bounded since cocycles $\B^{i-\ell,i-\ell}$
and $ \A^{i,i}$ are conformal and their normalizations are continuously cohomologous. The latter
follows since we know that $\B^{i-\ell,i-\ell}$ and $\A^{i-\ell,i-\ell}$  are continuously cohomologous
by $\c^{i-\ell,i-\ell}$ and that the normalizations of all $ \A^{i,i}$ are given by the same function $\phi ^{-1}$ from Theorem \ref{reductionH}.
Hence we can apply Proposition \ref{twist-meas} and conclude that $\c^{i-\ell,i}$ is $\beta$-H\"older.
\vskip.1cm
The argument above applies to $\ell=1, \dots i-1$ and we conclude that all $\c^{1,i}, \dots  ,\c^{i,i}$ are H\"older.
This proves that the restriction of $\c$ to $U^{i}$ is H\"older and completes the inductive step.
We conclude that $\c$ is H\"older, completing the proof of Theorem \ref{measurable conjugacy}.


 \section{Proof of Theorem \ref{constant cocycle}} \label{proof of constant}

In this proof we will also work with a conjugacy $\c$ satisfying \eqref{Cback}.
First, H\"older continuity of $\c$ is deduced from Theorem \ref{measurable conjugacy} as follows.

Let $A \in GL(N,\R)$ be the generator of the constant cocycle $\A$.
Let $\rho_1 < \dots <\rho_L$ be the distinct moduli of the eigenvalues of $A$ and let
\begin{equation} \label{splitA}
 \R^N = \E^1 \oplus  \dots \oplus \E^L
\end{equation}
be the corresponding invariant splitting as in \eqref{splitL}. In this section we will use the
adapted norm on $\R^N$ for which we have estimates  \eqref{rateA2}.
They imply that for any $\beta>0$ the cocycle $\A_i$ generated by $A_i$
is fiber bunched if $\e$ is sufficiently small.

 Let  $B(x)=\B_x:\M\to GL(N,\R)$ be the generator of the cocycle $\B$. If $B$ is
sufficiently $C^0$ close to $A$, then $\B$ has H\"older continuous invariant
splitting $C^0$ close to \eqref{splitA}
$$
\R^N = \EE^1_x \oplus \dots \oplus \EE^L_x,
$$
so that the restrictions $\B_i=\B| \EE^i$ satisfy  estimates similar to  \eqref{rateA2}
\begin{equation} \label{rateB}
 (\rho_i-2\e)^n \le \| \B_i^n u \| \le  (\rho_i+2\e)^n
\quad \text{for any unit vector }u\in \EE^i.
\end{equation}
This is well known but also follows from Lemma \ref{Ei est}, which gives explicit estimates
of both H\"older exponent and H\"older constant.
We conclude that all restrictions $\B_i$ are $\beta$-H\"older and hence are fiber bunched
 if $\e$ is sufficiently small.

Let $\c$ be a measurable conjugacy between $\A$ and $\B$.
We claim that  $\c$ maps $\E^i$ to $\EE^i$, that is
$\c_x (\E^i)=\EE^i_x$ for $\mu$ a.e. $x$. Indeed, by Lemma \ref{equal exp}, for $\mu$ a.e. $x$ and for each unit vector $u \in \E^i$
the forward and backward Lyapunov exponent of $\c_x(u)$ is $\ln \rho_i$.
This yields that $\c_x(u) \in \EE^i$, as having a non-zero component in another $\EE^j$ would
imply having forward or backward Lyapunov exponent under $\B$ different from $\ln \rho_i$
if $\e$ is sufficiently small.
Then $\c_i=\c|_{ \E^i}$ is a measurable conjugacy between fiber bunched cocycles $\A_i$ and $\B_i$.
By Theorem \ref{measurable conjugacy} each $\c_i$ is H\"older for all $i=1,\dots,L$,
 and hence so is $\c$.

\vskip.3cm

Now we prove the more detailed statement. We denote the Lipschitz constants of $f^{-1}$ and $f$ respectively by
\begin{equation} \label{alpha}
\a_f= \sup_{x \in \M} \| D_xf^{-1}\| >1 \quad\text{and}\quad \a_f'=\sup_{x\in \M} \| D_xf\| >1.
\end{equation}
For $1\le i<L$ we define
$$
\beta_i= \text{\small{$\frac{\ln (\rho_{i+1}/\rho_{i})}{ \ln (\a_f)}$}} \quad\text{and}\quad
\beta_i'= \text{\small{$\frac{\ln (\rho_{i+1}/\rho_{i})}{\ln (\a_f')}$}},
$$
and we choose
\begin{equation} \label{beta 0}
\beta_0= \beta_0(A,f) =\min\,\{1,\, \beta_1, \dots, \beta_{L-1},\, \beta_1', \dots, \beta_{L-1}'\} >0.
\end{equation}

Since $\B$ is $\beta$-H\"older with $\beta \le \beta' <\beta_0$, Lemma \ref{Ei est} below shows that the splitting \eqref{rateB}  is $\beta$-H\"older
and by Lemma \ref{Bi est} so are all restrictions $\B_i$. Then by Theorem \ref{measurable conjugacy} each restriction
$\c_i=\c|_{\E^i}$  is $\beta$-H\"older and hence so is $\c$.
Since $\A_i$ and $ \B_i$ are $\beta$-fiber bunched for any sufficiently small $\e$,
  \cite[Proposition 4.5]{S15} yields that $\beta$-H\"older $\c_i$ intertwines their stable holonomies, that is,
\begin{equation}\label{intertwines2}
\H_{x,y}^{\A_i,s}=\c_i(y)\circ \H_{x,y}^{\B_i,s}\circ \c_i(x)^{-1}\quad \text{for all }x,y \in \M
\text{ such that }y\in W^s(x).
\end{equation}
Since for the constant cocycle $\A_i$ the holonomies  are all identity, $\H_{x,y}^{\A_i,s}=\Id$, we get
$$\c_i(x)=\c_i(y)\circ \H_{x,y}^{\B_i,s}.
$$
 Thus using Lemma \ref{Hi est}  we obtain that for all $y\in W^s(x)$
$$\| \c_i(x)- \c_i(y)\|=\|\c_i(y) \circ (\H_{x,y}^{\B_i,s}-\Id)\| \le  \| \c_i\|_{C^0}  \cdot k_3 \, \hk_\beta (\B) \cdot  d_{W^s}  (x,y)^{\beta}.
$$
Combining these estimates for all $i=1,\dots, L$ we conclude that all $y\in W^s(x)$
$$\| \c(x)- \c(y)\| \le  \| \c\|_{C^0}  \cdot k_4 \, \hk_\beta (\B) \cdot  d_{W^s}  (x,y)^{\beta}.
$$
Similarly, using the analog of Lemma \ref{Hi est} for unstable holonomies, we obtain
 the same estimate for $y\in W^u(y)$. Then the local product structure of stable and unstable
  foliations of $f$ implies that the $\beta$-H\"older constant of $\c$
can be estimated as
$$
 \hk_\beta(\c) \le k \, \| \c\|_{C^0} \,  \hk_\beta (\B).
$$
Now, to complete the proof of the second part of the theorem,
we state and prove the lemmas used in the above argument.

\vskip .2cm


\begin{lemma}\label{Ei est}
For any $0<\beta'<\beta_0$ there is $\delta >0$ and $k_1>0$ such that for any $0<\beta \le \beta'$
 any  $\beta$-H\"older $GL(N,\R)$ cocycle $\B$ with $ \|\B_x-A\|_{C^{0}}<\delta$ preserves $\beta$-H\"older splitting
$$
\R^N = \EE^1_x \oplus \dots \oplus \EE^L_x
$$
 which is $C^0$ close to $\E^1 \oplus  \dots \oplus \E^L$ and for each $1\le i\le L$ the
 $\beta$-H\"older constant $\hk_\beta(\EE^i)$ of $\EE^i$ satisfies
\begin{equation} \label {hkEi}
 \hk_\beta(\EE^i) \le k_1\,  \hk_\beta (\B)
 \end{equation}
\end{lemma}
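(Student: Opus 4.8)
The plan is to obtain the invariant splitting $\R^N=\EE^1_x\oplus\dots\oplus\EE^L_x$ for $\B$ as a perturbation of the constant splitting \eqref{splitA} via a standard graph-transform (cone-field) argument, but tracking the H\"older constants carefully enough to extract \eqref{hkEi}. Concretely, I would first treat the coarsest dichotomy: group the eigenvalue-moduli into ``$\le\rho_i$'' and ``$>\rho_i$'' and build the two corresponding $\B$-invariant subbundles, then iterate this to split off one $\EE^i$ at a time. For each splitting step, fix an index $i$ and consider the two $A$-invariant subspaces $P=\E^1\oplus\dots\oplus\E^i$ and $Q=\E^{i+1}\oplus\dots\oplus\E^L$. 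In the adapted norm, $A|_P$ expands by at most $\rho_i+\e$ and $A|_Q$ expands by at least $\rho_{i+1}-\e$, so there is a genuine spectral gap. The subbundle $\EE^{\le i}_x$ is realized as the graph of a linear map $g_i(x)\colon P\to Q$, and $\B$-invariance turns into a fixed-point equation $g_i(x)=\Psi_x(g_i)$ for a graph-transform operator $\Psi$ on the space of bounded sections $x\mapsto g(x)\in L(P,Q)$. The operator $\Psi$ is a uniform contraction once $\|\B-A\|_{C^0}<\delta$ is small enough, because the contraction rate is governed by $(\rho_i+\e)/(\rho_{i+1}-\e)<1$; this gives existence, uniqueness, and $C^0$-closeness of $g_i$ to $0$, hence of $\EE^{\le i}$ to $P$.

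The key point is the H\"older estimate. On the Banach space $\mathcal{G}^\beta$ of sections $g$ with finite norm $\|g\|_{C^0}+K_\beta(g)$, I would show that the \emph{same} graph-transform operator $\Psi$ is again a contraction with a rate $\theta<1$ that is bounded away from $1$ uniformly in $\beta\le\beta'$ — this is exactly where $\beta_0$ enters, since the relevant rate is of the form $(\rho_i+\e)/(\rho_{i+1}-\e)\cdot(\a_f')^{\beta}$ or its stable-side analog $(\rho_i+\e)/(\rho_{i+1}-\e)\cdot(\a_f)^{\beta}$, and $\beta<\beta_0\le\beta_i,\beta_i'$ keeps this product below $1$ (shrinking $\e$ if needed). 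Writing the fixed-point equation as $g_i=\Psi(g_i)$ and noting that the ``constant'' section $0$ satisfies $\|\Psi(0)\|_{C^0}\lesssim\delta$ and $K_\beta(\Psi(0))\lesssim \hk_\beta(\B)$ (the H\"older variation of $\Psi(0)$ comes only from the H\"older variation of $\B$), the standard estimate $\|g_i\|_{\mathcal{G}^\beta}\le(1-\theta)^{-1}\|\Psi(0)\|_{\mathcal{G}^\beta}$ gives $K_\beta(g_i)\le k'\,\hk_\beta(\B)$ with $k'$ depending only on $A,f,\beta'$. Since $\EE^{\le i}$ is the graph of $g_i$ over the fixed subspace $P$, its H\"older constant as a subbundle (e.g.\ measured by the H\"older variation of the orthogonal projection onto it) is controlled by $K_\beta(g_i)$, hence by $\hk_\beta(\B)$.

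Finally, each $\EE^i$ is recovered as $\EE^{\le i}\cap\EE^{\ge i}$, where $\EE^{\ge i}$ is built symmetrically as the graph of a map $Q'\to P'$ with the complementary grouping; intersecting two transverse H\"older subbundles whose H\"older constants are $O(\hk_\beta(\B))$ yields a H\"older subbundle with H\"older constant $O(\hk_\beta(\B))$, giving \eqref{hkEi} with a suitable $k_1=k_1(A,f,\beta')$. The main obstacle is purely bookkeeping: ensuring the contraction rate $\theta$ on $\mathcal{G}^\beta$ is uniform over all $\beta\le\beta'<\beta_0$ and that all implied constants depend only on $(A,f,\beta')$ and not on the particular cocycle $\B$ or on $\beta$; this is exactly what forces the threshold $\beta_0$ defined in \eqref{beta 0}, and it is the crux that makes the later quantitative statements \eqref{hk C} and \eqref{C1H est} go through.
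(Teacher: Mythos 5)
Your plan is essentially the paper's: the coarse two-block splittings are produced by a graph-transform fixed point across the spectral gap between $\rho_i$ and $\rho_{i+1}$, uniformity in $\beta\le\beta'<\beta_0$ comes from keeping the rate of the form $(\xi'/\xi)\,\a_f^{\beta}<1$, and each $\EE^i$ is obtained by intersecting the bundle close to $\E^1\oplus\dots\oplus\E^i$ (built with the inverse dynamics) with the one close to $\E^{i+1}\oplus\dots\oplus\E^L$ (built with the forward dynamics); this is exactly Lemma \ref{EE' est} and the way the paper deduces Lemma \ref{Ei est} from it. The genuine gap is in the step carrying all the quantitative content: your claim that the graph transform $\Psi$ is a contraction, with rate $\theta<1$ depending only on $(A,f,\beta')$, on the Banach space with norm $\|g\|_{C^0}+K_\beta(g)$. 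The fiberwise graph transforms $\hat\B_x$ are nonlinear (fractional-linear) maps of the operator variable, and they vary in $x$ only through $\B_x$, whose H\"older constant $\hk_\beta(\B)$ is \emph{not} assumed small or even bounded --- the hypothesis is only $\|\B_x-A\|_{C^0}<\delta$. Estimating $K_\beta(\Psi s-\Psi s')$ therefore produces, besides the good term $K^{\B}\a_f^{\beta}K_\beta(s-s')$, cross terms of size comparable to $\bigl(\hk_\beta(\B)+K_\beta(s)+K_\beta(s')\bigr)\|s-s'\|_{C^0}$, so there is no contraction rate in the combined H\"older norm that is uniform over all admissible $\B$.

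Moreover, even granting such a contraction, the fixed-point bound $\|g_i\|_{\mathcal G^\beta}\le(1-\theta)^{-1}\|\Psi(0)\|_{\mathcal G^\beta}$ only yields $K_\beta(g_i)\le C\bigl(\delta+\hk_\beta(\B)\bigr)$, since $\|\Psi(0)\|_{C^0}\approx\delta$ sits in the same norm; this is strictly weaker than \eqref{hkEi} precisely where it matters. For instance, a constant cocycle $\B_x\equiv B$ with $B\ne A$ has $\hk_\beta(\B)=0$ and the lemma then asserts the invariant splitting is constant; and in the application $\hk_\beta(\B)=\hk_\beta(Df)$ is small while $\delta$ is a fixed constant, and the proportional bound is exactly what later produces \eqref{hk C}, \eqref{hkDH} and \eqref{C1H est}. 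The repair is what the paper does in Lemma \ref{EE' est}: use the contraction only in the $C^0$ norm to get existence, uniqueness and $C^0$-closeness of $s_*=\lim\Psi^n(0)$, and treat the seminorm separately through the a priori inequality $K_\beta(\Psi s)\le\hat L\,\a_f^{\beta}\,\hk_\beta(\B)+K^{\B}\a_f^{\beta}K_\beta(s)$, which shows that the ball $\{K_\beta(s)\le M\}$ with $M=(1-K^{\B}\a_f^{\beta})^{-1}\hat L\,\a_f^{\beta}\,\hk_\beta(\B)$ is $\Psi$-invariant; since $K_\beta(\Psi^n 0)\le M$ for all $n$ and $\Psi^n0\to s_*$ in $C^0$, one gets $K_\beta(s_*)\le M\le k'\,\hk_\beta(\B)$, uniformly for $0<\beta\le\beta'$ because $1-K^{\B}\a_f^{\beta}$ stays bounded away from $0$ once $\beta'<\beta_0$ and $\delta$ is small. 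With this correction the rest of your outline (the stable-side construction using $A^{-1}$, $f^{-1}$, and the intersection defining each $\EE^i$) goes through as in the paper.
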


\begin{proof} We deduce this lemma from the one below. We fix $1\le i<L$, and let
$$
E'= \E^1 \oplus  \dots \oplus \E^i\quad\text{and}\quad  E=\E^{i+1} \oplus  \dots \oplus \E^L.
$$
Lemma \ref{EE' est} below shows that for any $\beta'<\beta_i$ there is $\delta >0$ and
$k'$ such that for any $0<\beta \le \beta'$  any cocycle $\B$ with $ \|\B_x-A\|_{C^{0}}<\delta$ preserves
the bundle $\EE$ close to $\E$ with the desired estimate for  $\beta$-H\"older constant.
Similarly, for any $\beta'<\beta_i'$ using the inverses of $A$ and $f$ we obtain that $\B$
preserves a bundle $\EE'$ close to $\E'$ with a similar estimate for its $\beta$-H\"older constant.
Then  for each $1\le i\le L$ the bundle $\EE^i$ is defined as a suitable intersection and hence
is also $C^0$ close to $\E^i$ and its  $\beta$-H\"older constant satisfies \eqref{hkEi}.
\end{proof}

\begin{remark}
Lemmas  \ref{Ei est} and \ref{EE' est} do not rely on hyperbolicity of $f$ and use only that it is bi-Lipschitz.
\end{remark}

\begin{lemma}\label{EE' est}
Let $A \in GL(N,\R)$, let $\,\R^N=E'\oplus E$ be an $A$-invariant splitting, and let
   $$
   \begin{aligned}
   \xi' &= \max \,\{\,\| Av \| : \, v \in E',\;  \|v\|=1\,\}=\|A|_{E'}\|\,\,\text{ and } \\
    \xi & = \min \,\{\,\|Av \| : \, v \in E,\;  \|v\|=1\,\} = \|A^{-1}|_{E}\|^{-1}.
   \end{aligned}
   $$
Let $\a_f=\sup \| Df^{-1}\| >1$ be the Lipschitz constant of $f^{-1}$ and let $\beta'>0$. Suppose that
$$\,
\xi'<\xi \quad\text{and}\;\quad \text{\small{$\frac{\xi' \a_f^{\beta'}}{\xi}$}} <1,
\;\text{  that is, }\;\,\beta'< \text{\small{$\frac{\ln (\xi /\xi')}{\ln\a_f}$}}.
$$
Then there is $\delta >0$ and $k'$ such that for any $0<\beta \le \beta'$
 any $\beta$-H\"older $GL(N,\R)$ cocycle $\B$ with $ \|\B_x-A\|_{C^{0}}<\delta$ preserves
a $\beta$-H\"older sub-bundle $\EE$ which is $C^0$ close to $E$ and its $\beta$-H\"older constant $\hk_\beta(\EE)$ satisfies
$$\hk_\beta(\EE) \le k'\,  \hk_\beta (\B)$$ 
\end{lemma}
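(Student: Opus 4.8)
The plan is to realize $\EE$ as the graph of a small linear bundle map $S$ and to obtain $S$ as the fixed point of a graph transform that contracts both the $C^0$ norm and, crucially, the $\beta$-H\"older seminorm, the contraction factor in the latter being precisely the quantity $\xi'\a_f^{\beta'}/\xi<1$ from the hypothesis. Fix the splitting $\R^N=E'\oplus E$ and write $\B_x$ in the corresponding block form, with blocks $a_x\colon E'\to E'$, $b_x\colon E\to E'$, $c_x\colon E'\to E$, $d_x\colon E\to E$; since $\|\B_x-A\|_{C^0}<\delta$, the blocks $a_x,d_x$ are $\delta$-close to $A|_{E'},A|_{E}$ and $\|b_x\|,\|c_x\|<\delta$. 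For a section $S$ with $S_x\colon E\to E'$ linear and $\sup_x\|S_x\|$ small, the subbundle $\EE^S$ with fibers $\EE^S_x=\{(S_xw,w)\colon w\in E\}$ is $\B$-invariant, i.e. $\B_x(\EE^S_x)=\EE^S_{fx}$, if and only if $S_{fx}=\Psi_x(S_x)$ where $\Psi_x(S):=(a_xS+b_x)(c_xS+d_x)^{-1}$; this is well defined because $c_xS+d_x$ is $C^0$-close to $A|_E$, hence invertible with $\|(c_xS+d_x)^{-1}\|$ uniformly bounded on the relevant ball. Thus the desired $\EE$ corresponds to a fixed point of the graph transform $(\widehat\Psi S)_x:=\Psi_{f^{-1}x}(S_{f^{-1}x})$.

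First I would establish existence. A direct computation gives, on the ball $\{\sup_x\|S_x\|\le C_0\delta\}$ for a suitable $C_0=C_0(A)$, that $\Psi_x$ is $C^1$ in $S$ with $\|\partial_S\Psi_x\|\le \xi'/\xi+O(\delta)$ and $\|\Psi_x(0)\|=\|b_xd_x^{-1}\|=O(\delta)$. Since $\xi'/\xi<1$, for $\delta$ small enough $\widehat\Psi$ maps this ball into itself and is a $C^0$-contraction on it, so it has a unique fixed point $S$ there; the resulting $\EE=\EE^S$ is then $\B$-invariant and $C^0$-close to $E$ because $\|S\|_{C^0}=O(\delta)$. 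Since for graphs of linear maps of small norm the Grassmannian distance between $\EE^S_x$ and $\EE^S_y$ is comparable to $\|S_x-S_y\|$, the $\beta$-H\"older constant of the subbundle $\EE$ is comparable to $K_\beta(S)$, so it remains to bound the latter.

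For the H\"older bound, for nearby $x,y$ write
\[
(\widehat\Psi S)_x-(\widehat\Psi S)_y=\bigl[\Psi_{f^{-1}x}(S_{f^{-1}x})-\Psi_{f^{-1}x}(S_{f^{-1}y})\bigr]+\bigl[\Psi_{f^{-1}x}(S_{f^{-1}y})-\Psi_{f^{-1}y}(S_{f^{-1}y})\bigr].
\]
Using $\dist(f^{-1}x,f^{-1}y)\le\a_f\,\dist(x,y)$ and the bound on $\|\partial_S\Psi\|$, the first bracket is at most $(\xi'/\xi+O(\delta))\,\a_f^\beta\,K_\beta(S)\,\dist(x,y)^\beta$; here the hypothesis enters, since $\beta\le\beta'<\ln(\xi/\xi')/\ln\a_f$ gives $(\xi'/\xi)\a_f^\beta\le(\xi'/\xi)\a_f^{\beta'}=:\lambda<1$, so the first bracket is at most $(\lambda+O(\delta))K_\beta(S)\dist(x,y)^\beta$. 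For the second bracket, $\Psi_z(S)$ depends on $z$ only through the blocks of $\B_z$, Lipschitz-ly with a constant depending only on $A$ and $\delta$ (again because $(c_zS+d_z)^{-1}$ is uniformly bounded), so it is at most $C_1\a_f^\beta K_\beta(\B)\dist(x,y)^\beta\le C_1\a_f^{\beta'}K_\beta(\B)\dist(x,y)^\beta$. Hence $K_\beta(\widehat\Psi S)\le(\lambda+O(\delta))K_\beta(S)+C_2K_\beta(\B)$ with $C_2=C_2(A,\a_f,\beta')$. Shrinking $\delta$ so that $\lambda_1:=\lambda+O(\delta)<1$, the set $\{\sup_x\|S_x\|\le C_0\delta,\ K_\beta(S)\le M\}$ with $M:=C_2(1-\lambda_1)^{-1}K_\beta(\B)$ is $C^0$-closed and $\widehat\Psi$-invariant, so the fixed point $S$ satisfies $K_\beta(S)\le M$, whence $K_\beta(\EE)\le k'K_\beta(\B)$ with $k'=k'(A,\a_f,\beta')$.

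The step requiring the most care is keeping all constants uniform over $0<\beta\le\beta'$ and over the admissible cocycles $\B$: the only $\beta$-dependence anywhere is through $\a_f^\beta\le\a_f^{\beta'}$, which is exactly why the H\"older contraction factor may be taken to be the $\beta$-independent $\lambda=\xi'\a_f^{\beta'}/\xi<1$, while $C_0$, the Lipschitz constants of $\Psi$ in $S$ and in the blocks of $\B$, and the bound on $(c_xS+d_x)^{-1}$ depend only on $A$, $\a_f$, and $\delta$. Beyond this bookkeeping I do not expect a genuine obstacle: this is the standard persistence of a dominated splitting, and the content of the lemma is the linear dependence of $K_\beta(\EE)$ on $K_\beta(\B)$, which falls out of the affine recursion $K_\beta(\widehat\Psi S)\le\lambda_1K_\beta(S)+C_2K_\beta(\B)$.
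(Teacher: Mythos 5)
Your proposal is correct and follows essentially the same route as the paper: both realize $\EE$ as the graph of a fixed point of the graph transform acting on sections $S_x\colon E\to E'$, use the $C^0$ contraction with factor close to $\xi'/\xi$ for existence, and derive the affine recursion $K_\beta(FS)\le(\xi'\a_f^{\beta'}/\xi+O(\delta))K_\beta(S)+C\,K_\beta(\B)$ to get the linear bound $K_\beta(\EE)\le k'K_\beta(\B)$. The only differences are cosmetic: you compute the graph transform explicitly in block (fractional-linear) form where the paper invokes Lipschitz dependence of the induced Grassmannian map on $B$, and you extract the bound from a closed invariant set rather than iterating from the zero section, which is equivalent.
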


\begin{proof}
The argument is similar to the H\"older version the $C^r$ Section Theorem of
M.~Hirsch, C.~Pugh, and M.~Shub (see Theorem 3.8 in  \cite{HPS}), but we give
the  estimate  of the H\"older constant.

We consider the space $\L=\L(E,E')$ of linear operators from $E$ to  $E'$ and endow it with the
standard operator norm.  Since $A$ preserves the splitting $E'\oplus E$ it induces the graph transform
action $\hat A$ on $\L$ as follows: if $L \in \L$ and $G \subset \R^N$  is its graph then $\hat A (L) $ is
 the operator in $\L$ whose graph is $A (G)$. The map $\hat A$ is linear,
 $$
 \hat A \,[L] = A |_{E'} \circ L \circ (A |_{E})^{-1},
 $$
so we can estimate its norm as
$$
\|\hat A \| \le \| A |_{E'}\| \cdot \|(A |_{E})^{-1}\|\le \xi'/\xi <1.
$$

Similarly, any linear map $B\in GL(N,\R)$ sufficiently close to $A$ induces in the same way
the graph transform map $\hat B$ on a unit ball $\L_1$ in $\L$. Moreover, $\hat B$ is a contraction
 of $\L_1$ with Lipschitz constant $K(\hat B)$ close to $K(\hat A )=\xi'/\xi<1$. Indeed, $B$ induces
 an algebraic map on the Grassmannian of $(\dim E)$-dimensional subspaces which, together
 with its first derivatives, depends continuously on $B$.  Also, it is easy to see that the map
 $B \mapsto \hat B$ from a small neighborhood of $A$ to $C^0(\L_1,\L_1)$ is Lipschitz with some
   constant $\hat L$.

Now we consider the trivial fiber bundle $\vv=\M \times \L_1$. Then any $\B_x$ which is  $C^0$-close to $A$
 induces graph transform maps $\hat \B_x : \vv_x \to \vv_{fx}$ and thus
the bundle map $\hat \B : \vv \to \vv$ covering $f$. We consider the space $S$ of continuous
sections of $\vv$ with the supremum norm, and the induced action $F=F_\B$ on $S$
defined for $s \in S$ as $(Fs)(fx)= \B_x(s(x))$. If $K^{\B}:=\sup _x K({\hat \B_x }) < 1$ then $F$ is
a contraction on $S$ and hence has a unique fixed point $s_*=Fs_*$. Let $s_0(x)=0\in \L$ be the
zero section, then we can write $s_*= \lim F^ns_0$ and it follows that $s_*$ is $C^0$-close to $s_0$.
Denoting the graph of $s(x)$ by $\EE_x$ we obtain the unique continuous $\B$-invariant sub-bundle close to $E$.

Now we will show that $s_*$ is  $\beta$-H\"older and estimate its $\beta$-H\"older constant.
For this we will find $M>0$ such that $\hk_\beta (s) \le M$ implies $\hk_\beta (F s) \le M$.
Then $\hk_\beta (F^n (s_0)) \le M$ for all $n$ and since $s_*= \lim F^n(s_0)$ it will follow that
$\hk_\beta (s_*) \le M$.

Fix points $z,z'$ and  let $x=f(z)$, $x'= f(z')$. Then for any $\beta$-H\"older $s\in S$ we can
estimate,  as $\|s(x)\| \le 1$, that
$$
\begin{aligned}
&\| Fs (x) - Fs(x')\|=\| \hat \B_z s (z) - \hat \B_{z'}s(z')\|  \\
& \le \| \hat \B_z s (z) - \hat \B_{z'}s(z)\| + \| \hat \B_{z'} s (z) - \hat \B_{z'}s(z')\| \\
&\le d_{C^0}(\hat \B_{z} ,\hat \B_{z'} )  + K({\hat \B_{z'}  })\|  s (z) - s(z')\| \le \hat L \| \B_{z} -\B_{z'}\|  + K^{\B}\|  s (z) - s(z')\| \\
&\le \hat L\,  \hk_\beta (\B) \,d(z,z')^\beta  + K^{\B} \hk_\beta (s) \,d(z,z')^\beta \le
 [\hat L \, \hk_\beta (\B)  + K^{\B} \hk_\beta (s)]\,(\a_f\, d(x,x'))^\beta,
 \end{aligned}
$$
where $\a_f$ is the Lipschitz constant of $f^{-1}$ and $\hat L$ is the Lipschitz constant of the map $B \mapsto \hat B$ on a neighborhood of $A$. Hence $Fs$ is also $\beta$-H\"older and
$$
\hk_\beta (Fs) \le \hat L \, \a_f^\beta\, \hk_\beta (\B)  +  \a_f^\beta \,K^{\B} \hk_\beta (s).
$$
Therefore, $\hk_\beta (s) \le M$ implies $\hk_\beta (Fs) \le M$ if we take
$$
M= (1-K^{\B}a_f^\beta)^{-1} \hat L \,a_f^\beta\,\hk_\beta (\B).
$$
If $\|\B_x-A\|_{C^0}$ is small then  $K^{\B}$ is close to $K(\hat A )=\xi'/\xi$.
 Since $\,\xi' \a^{\beta'} /\xi <1\,$ and $\beta \le \beta'$ it follows that $1-K^{\B}a_f^\beta>0$
 and is separated from $0$.
Then there is a constant $k'$ which bounds $ \hat L a_f^\beta\,(1-K^{\B}a_f^\beta)^{-1}$ for
all  $0<\beta \le \beta'$ and all $\B$ with $\|\B_x-A\|_{C^{0}}<\delta$. Hence,
 $$
 M \le k' \, \hk_\beta (\B). 
 $$
\vskip.1cm

Finally, since $\hk_\beta (s_0) =0$  it  follows that $\hk_\beta (F^n (s_0)) \le M$ for all $n$ and hence for the limit
 we also have $\hk_\beta (s_*) \le M \le k'  \hk_\beta (\B) $. 
\end{proof}

\vskip.2cm

Now we estimate  the $\beta$-H\"older constants of the restricted cocycles $\B_i=\B|_{ \EE^i}$.

\begin{lemma}\label{Bi est} For any $0<\beta'<\beta_0$ there is $\delta >0$ and $k_2>0$ such that
for any $0<\beta \le \beta'$ and any $\beta$-H\"older cocycle $\B$ with $\|\B_x-A\|_{C^{0}}<\delta$ the $\beta$-H\"older
constant of the cocycle $\B_i$, $i=1,\dots L$, satisfies
$$
\hk_\beta(\B_i) \le k_2\, \hk_\beta (\B).
$$
\end{lemma}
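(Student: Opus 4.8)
The plan is to derive the estimate from Lemma \ref{Ei est} — the $\beta$-H\"older control of the invariant splitting — together with the uniform $C^0$ bounds that come from $\B$ being $C^0$-close to $A$. Fix $0<\beta'<\beta_0$ and choose $\delta$ small enough that Lemma \ref{Ei est} applies and, in addition, $\|\B_x^{\pm1}\|\le C_0$ for all $x$ with $C_0$ depending only on $A$, and the splitting $\R^N=\EE^1_x\oplus\cdots\oplus\EE^L_x$ stays so $C^0$-close to $\E^1\oplus\cdots\oplus\E^L$ that its blocks are uniformly transverse. Let $\Pi^i_x:\R^N\to\EE^i_x$ be the projection onto $\EE^i_x$ along $\bigoplus_{j\ne i}\EE^j_x$. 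Uniform transversality gives a uniform bound $\|\Pi^i_x\|\le C_1$, and since the assignment sending an $L$-tuple of uniformly transverse subspaces to such a projection is Lipschitz (with constant depending only on a transversality bound), Lemma \ref{Ei est} yields
$$
\hk_\beta(\Pi^i)\;\le\;C_2\sum_{j=1}^{L}\hk_\beta(\EE^j)\;\le\;C_2Lk_1\,\hk_\beta(\B).
$$
Moreover, since $\B_x$ preserves the splitting, the restricted map $(\B_i)_x:=\B_x|_{\EE^i_x}:\EE^i_x\to\EE^i_{fx}$ has inverse $\B_x^{-1}|_{\EE^i_{fx}}$, so $\|(\B_i)_x^{\pm1}\|\le C_0$ as well.

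Next I would bound the $\beta$-H\"older constant of $\B_i$ using the $\Pi^i$ as local trivializations of the bundle $\EE^i$ (in the sense of Section 2.2 of \cite{KS13}): for nearby $x,y$ one identifies $\EE^i_x$ with $\EE^i_y$ via $\Pi^i_y$ and $\EE^i_{fx}$ with $\EE^i_{fy}$ via $\Pi^i_{fx}$, each identification being $O(d(x,y)^\beta)$-close to the identity by the first step, so that $\hk_\beta(\B_i)$ is controlled by $\sup_{x\ne y}d(x,y)^{-\beta}\big\|\Pi^i_{fx}\circ\B_y\circ\Pi^i_y|_{\EE^i_x}-\B_x|_{\EE^i_x}\big\|$ together with the analogous quantity for $(\B_i)^{-1}$. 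Because $\Pi^i_x$ is the identity on $\EE^i_x$ and $\B_x$ maps $\EE^i_x$ into $\EE^i_{fx}$, the operator inside the norm collapses to $\Pi^i_{fx}\circ\big(\B_y\circ(\Pi^i_y-\Pi^i_x)|_{\EE^i_x}+(\B_y-\B_x)|_{\EE^i_x}\big)$ — the term that would involve $\Pi^i_{fx}-\Id$ drops out by invariance — and is therefore at most $C_1\big(C_0\,\hk_\beta(\Pi^i)+\hk_\beta(\B)\big)d(x,y)^\beta$. The corresponding expansion for $(\B_i)^{-1}$ produces a projection-difference term evaluated at $fx,fy$, contributing a factor $d(fx,fy)^\beta\le(\a_f')^\beta\le(\a_f')^{\beta'}$ (where $\a_f'$ is the Lipschitz constant of $f$), so it is at most $C_1\big(C_0^2\,\hk_\beta(\B)+C_0(\a_f')^{\beta'}\hk_\beta(\Pi^i)\big)d(x,y)^\beta$. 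Substituting the bound $\hk_\beta(\Pi^i)\le C_2Lk_1\hk_\beta(\B)$ and collecting the constants $C_0,C_1,C_2,k_1,(\a_f')^{\beta'}$ — all of which depend only on $A$, $f$, and $\beta'$ — yields $\hk_\beta(\B_i)\le k_2\,\hk_\beta(\B)$ with $k_2$ independent of $i$.

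The estimate in the second step is routine triangle-inequality bookkeeping once the first step is available, so the one point needing real care is that first step: passing from the $\beta$-H\"older control of the subbundles $\EE^i$ provided by Lemma \ref{Ei est} (phrased via the Grassmannian) to $\beta$-H\"older control of the associated projections $\Pi^i$ with a comparable constant. This rests on the blocks of the splitting being uniformly transverse, which holds precisely because $\delta$ is kept small enough for the splitting to stay $C^0$-close to the fixed splitting of $A$; given uniform transversality, a projection depends real-analytically on its defining transverse subspaces with derivative bounded on the relevant neighborhood in the product of Grassmannians, so the H\"older seminorm transfers with only a bounded multiplicative loss.
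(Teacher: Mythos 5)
Your proof is correct and takes essentially the same route as the paper: both reduce the estimate to Lemma \ref{Ei est} and a triangle-inequality comparison of $\B_x|_{\EE^i_x}$ with $\B_y|_{\EE^i_y}$ after identifying nearby fibers, picking up one term of size $\hk_\beta(\B)\,d(x,y)^\beta$ from the cocycle and one of size $\|\B\|_{C^0}\,\hk_\beta(\EE^i)\,d(x,y)^\beta$ from the splitting. The only differences are cosmetic: the paper identifies fibers by a translation followed by a rotation and bounds $\|u-\I_{x,y}u\|\le \dist(\EE^i_x,\EE^i_y)\le k_1\hk_\beta(\B)\,d(x,y)^\beta$ directly, whereas you route the same H\"older control of the splitting through the projections $\Pi^i$ (justified by uniform transversality), and you additionally estimate the inverses, which the paper's argument does not need.
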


\begin{proof}
Denoting $B(x)=\B_x$ and $B_i(x)=\B_x| _{\EE^i}$ we need to estimate the distance between $B_i(x)$ and $B_i(y)$.
To do this using their difference, we fix $\beta$-H\"older identifications $\I_{x,y} :\EE^i_x \to \EE^i_y$,
say by translation from $x$ to $y$ in the trivial bundle $\M \times \R^N$ followed by an appropriate rotation.
Then for a unit vector $u\in \EE^i(x)$ we need to estimate $\|(B_i(x)-B_i(y)\circ \I_{x,y})u\|$.
We note that
$$
\|u-\I_{x,y}u\| \le \dist (\EE^i_x,\EE^i_y)\le \hk_\beta(\EE^i) \,d(x,y)^\beta.
$$
Also, since $B(x)$ is $\beta$-H\"older have
$\|B(x)u-B(y)u\| \le \hk_\beta(\B) \,d(x,y)^\beta$. 
 Hence we obtain that for a unit vector $u\in \EE^i(x)$
$$
\begin{aligned}
\|(B_i(x)-B_i(y)\circ \I_{x,y})u\|& \le  \|B(x)u-B(y)u\|+\|B(y)\| \cdot \|u-\I_{x,y}u\| \\
& \le \hk_\beta (\B) \, d(x,y)^\beta+  \|B\|_{C^0} \,\hk_\beta(\EE^i) \, d(x,y)^\beta .
\end{aligned}
$$
Since
 $\hk_\beta(\EE^i) \le k_1\, \hk_\beta (\B)$ by \eqref{hkEi} and
 $\|B\|_{C^0} \le \|A\| +\|\B_x-A\|_{C^{0}}\le \|A\| +\delta$  we conclude that
$$
\|(B_i(x)-B_i(y)\circ \I_{x,y})u\| \le  k_2 \, \hk_\beta (\B) \, d(x,y)^\beta.
$$
Thus $\hk_\beta(\B_i) \le k_2\,  \hk_\beta (\B) .$
\end{proof}


In the next lemma we consider the stable holonomies of cocycles $\B_i=\B|_{\EE^i}$, $i=1,\dots, L$.

\begin{lemma}\label{Hi est}
For any $0<\beta'<\beta_0$ there is $\delta >0$ and $k_3>0$ such that for any $0<\beta \le \beta'$
and a $\beta$-H\"older cocycle $\B$ with $\|\B_x-A\|_{C^{0}}<\delta$ the holonomies of cocycles  $\B_i=\B|_{\EE^i}$ satisfy
$$
\| \H^{s}_{\,x,y} - \Id \,\| \le k_3 \, \hk_\beta (\B) \, d (x,y)^{\beta} \;\text{ for any  } x\in \M \text{ and } y\in W^s_{\text{loc}}(x).
$$

\end{lemma}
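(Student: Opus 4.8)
The plan is to revisit the construction of the stable holonomy from Proposition \ref{existence of holonomies}, applied to the cocycle $\F:=\B_i$ on the sub-bundle $\EE^i$, and to keep track of how the constant in property $(\H4)$ depends on the H\"older data of the cocycle. Writing the defining limit \eqref{hol def} as a telescoping series gives
$$
\H^{s}_{x,y}\;=\;\Id\;+\;\sum_{n=0}^{\infty}\Big[(\F^{n+1}_y)^{-1}\circ\F^{n+1}_x-(\F^{n}_y)^{-1}\circ\F^{n}_x\Big],
$$
where the leading term, being the stable holonomy of the constant cocycle $A_i$, equals the identity up to the $\beta$-H\"older identification $\EE^i_x\to\EE^i_y$. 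Using $\F^{n+1}_z=\F_{f^nz}\circ\F^{n}_z$ one rewrites the $n$-th summand as $(\F^{n+1}_y)^{-1}\circ(\F_{f^nx}-\F_{f^ny})\circ\F^{n}_x$, the fibrewise difference of $\F$ at the nearby points $f^nx$ and $f^ny$ being understood via the $\beta$-H\"older identifications of $\EE^i$, exactly as in the proof of Lemma \ref{Bi est}. Thus it suffices to bound the $n$-th summand by $(\text{const})\cdot\theta^n\cdot\hk_\beta(\B)\cdot d(x,y)^\beta$ with $\theta<1$ and $\text{const}$ uniform.

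For $y\in W^s_{\loc}(x)$ we have the contraction $d(f^nx,f^ny)\le\nu^n_x\,d(x,y)$. Since $\B_i$ is $\beta$-H\"older with $\hk_\beta(\B_i)\le k_2\,\hk_\beta(\B)$ by Lemma \ref{Bi est} and the identifications of $\EE^i$ are controlled by $\hk_\beta(\EE^i)\le k_1\,\hk_\beta(\B)$ from Lemma \ref{Ei est}, we get (absorbing the identification error, which is of the same order) that
$$
\big\|\F_{f^nx}-\F_{f^ny}\big\|\;\le\;C_1\,\hk_\beta(\B)\,(\nu^n_x)^\beta\,d(x,y)^\beta .
$$
The cocycle $\B_i$ is fiber bunched — as recorded in the proof of Theorem \ref{constant cocycle}, this holds for all $\beta\le\beta'$ once the adapted-norm parameter $\e$, hence $\delta$, is small — so \eqref{fiber bunched} bounds $\|\F^{n}_x\|\cdot\|(\F^{n+1}_y)^{-1}\|$ by $c\,\theta^n(\nu^n_x)^{-\beta}$, using that $\|(\F^{n}_y)^{-1}\|$ is comparable to $\|(\F^{n}_x)^{-1}\|$ through the uniformly bounded holonomies (which exist by Proposition \ref{existence of holonomies}). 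Multiplying these two estimates, the powers of $\nu^n_x$ cancel and the $n$-th summand is at most $C_1 c\,\theta^n\,\hk_\beta(\B)\,d(x,y)^\beta$. Summing the geometric series yields
$$
\big\|\H^{s}_{x,y}-\Id\big\|\;\le\;\frac{C_1 c}{1-\theta}\,\hk_\beta(\B)\,d(x,y)^\beta\;=:\;k_3\,\hk_\beta(\B)\,d(x,y)^\beta\qquad\text{for }y\in W^s_{\loc}(x),
$$
which is the claim.

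The substance here is not the series manipulation but the uniformity of constants: $C_1$, $c$, $\theta$, and ultimately $k_3$ must be controlled uniformly over all $0<\beta\le\beta'$ and all $\beta$-H\"older $\B$ with $\|\B_x-A\|_{C^0}<\delta$. The linear dependence on $\hk_\beta(\B)$ throughout is supplied by Lemmas \ref{Ei est} and \ref{Bi est}, and the geometric decay rate comes from the uniform near-conformal estimates \eqref{rateB} for the restricted cocycles $\B_i$ together with the resulting fiber bunching, with $\e$ (hence $\delta$) chosen small in terms of $\beta'$. The analogue for unstable holonomies follows by replacing $f$ with $f^{-1}$. I expect this uniformity bookkeeping, rather than any new idea, to be the main technical point.
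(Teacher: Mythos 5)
Your telescoping of the defining limit, the H\"older estimate of the one-step difference along the contracting orbit pair, and the use of Lemmas \ref{Ei est} and \ref{Bi est} to get the linear factor $\hk_\beta(\B)$ are exactly the paper's argument (the paper writes the $m$-th increment as $(\F^m_y)^{-1}\circ r_m\circ\F^m_x$ with $r_m=(\F_{y_m})^{-1}\circ\F_{x_m}-\Id$, which is the same identity as yours). The gap is in the one step you yourself single out as the main technical point: to control $\|\F^n_x\|\cdot\|(\F^{n+1}_y)^{-1}\|$ you apply fiber bunching \eqref{fiber bunched} at the single point $x$ and then transfer $\|(\F^n_y)^{-1}\|$ to $\|(\F^n_x)^{-1}\|$ ``through the uniformly bounded holonomies''. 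The only holonomy bound available at this stage is ($\H$4) of Proposition \ref{existence of holonomies}, whose constant is uniform in $x,y$ but \emph{not} in the cocycle: it is of the order of $\hk_\beta(\B_i)$, and producing that constant explicitly is precisely the content of the present lemma, so the step is also borderline circular. Since the hypothesis is only $\|\B_x-A\|_{C^0}<\delta$, with no bound on $\hk_\beta(\B)$, your comparability constant grows with $\hk_\beta(\B)$; after summing the series you get a bound of the form $\mathrm{const}\,\hk_\beta(\B)\bigl(1+\hk_\beta(\B)\bigr)\,d(x,y)^\beta$ rather than $k_3\,\hk_\beta(\B)\,d(x,y)^\beta$ with $k_3=k_3(A,f,\beta')$. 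That linearity with a uniform $k_3$ is exactly what is needed downstream for \eqref{hk C} and \eqref{hkDH}, so as written the proposal does not prove the stated lemma.

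The repair is small and is what the paper does: no holonomies (and no single-point bunching) are needed here. Because the restriction $\B_i$ satisfies the two-sided estimates \eqref{rateB}, one has directly, for \emph{any} two points $x,y$, the product bound $\|\F^m_x\|\cdot\|(\F^m_y)^{-1}\|\le\sigma^m$ with $\sigma=(\rho_i+2\e)(\rho_i-2\e)^{-1}$ close to $1$ (this is \eqref{F norm} in the paper), whence $\|\F^m_x\|\cdot\|(\F^m_y)^{-1}\|\,\nu^{m\beta}\le\theta^m$ with $\theta=\sigma\nu^\beta<1$ for $\delta$, hence $\e$, small. Multiplying this by your H\"older estimate for the one-step difference gives the summand bound $\mathrm{const}\cdot\theta^m\,\hk_\beta(\B)\,d(x,y)^\beta$ with all constants depending only on $A$, $f$ and $\beta'$, and the geometric series then yields the lemma as stated. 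With that substitution your argument coincides with the paper's proof.
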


 \begin{proof}

We fix $i$ and denote $\F=\B_i$. The stable holonomies of $\F$ are given by
\begin{equation}\label{hol def F}
 \H^{\F,s}_{x,y} =\underset{n\to\infty}{\lim} \,(\F^n_y)^{-1} \circ \F^n_x.
\end{equation}
The existence is ensured by fiber bunching of $\F$. Indeed,  the contraction along $W^s$
is estimated by \eqref{Anosov def} as
$$
d (f^nx, f^ny)\le  \nu^n d (x,y)
\;\text{ for any  } x\in \M, \;\, y\in W^s_{\text{loc}}(x),\;\, n\in \N,
$$
We also obtain from  \eqref{rateB}  that
  \begin{equation}\label{F norm}
\| \F_x^m\|\cdot \|(\F_y^m)^{-1}\| \le  \prod_{j=0}^{m-1} \|\F_{x_j}\| \, \|(\F_{y_j})^{-1}\| \le \,
\text{\small {$\left( \frac{\rho_i+2\e}{\rho_i-2\e} \right)$}}^m =\sigma^m \quad\text{for all }x,y\in\M,
\end{equation}
where $\sigma= (\rho_i+2\e) (\rho_i-2\e)^{-1}$ is close to $1$ when $\e$ is small. 
 It follows that
 \begin{equation}\label{F bunch}
\| \F_x^m\|\cdot \|(\F_y^m)^{-1}\|  \cdot  \nu^{m\beta} \le \sigma^m  \cdot  \nu^{m\beta} =\theta^m \quad\text{for all }x,y\in\M,
\end{equation}
where $\,\theta=\sigma  \nu^{\beta}<1\,$ if $\delta$ and hence $\e$ are sufficiently small. In particular, $\F$ is fiber bunched so the limit in \eqref{hol def F} exits, though this also follows from the proof.
\vskip.1cm

We want to obtain a constant $c$ such that $\| \H^{\F,s}_{\,x,y} - \Id \,\| \leq c\,d (x,y)^{\beta}\,$  for all $x\in \M$   and $y\in W^s_{\text{loc}}(x)$.
Denoting $\,x_m=f^m(x)$ and $\,y_m=f^m(y)$, we obtain
$$
 \begin{aligned}
 & (\F^n_y)^{-1}\circ \F^n_x \,= (\F^{n-1}_y)^{-1}\circ \left(
   (\F_{y_{n-1}})^{-1} \circ \F_{x_{n-1}}\right) \circ \F^{n-1}_x \\
  & = (\F^{n-1}_y)^{-1} \circ (\Id+r_{n-1}) \circ \F^{n-1}_x  = (\F^{n-1}_y)^{-1}\circ\F^{n-1}_x+(\F^{n-1}_y)^{-1}\circ r_{n-1}
  \circ \F^{n-1}_x   \\
 &=\dots = \Id+\sum_{m=0}^{n-1} (\F^{m}_y)^{-1}\circ r_{m}\circ \F^m_x ,
 \quad \text{where  }r_m=(\F_{y_m})^{-1} \circ \F_{x_m}-\Id.
 \end{aligned}
$$
Since $\F$ is $\b$-H\"older, denoting $c'=   (\rho_i-2\e)^{-1}\hk_\beta (\F)$, we obtain that for every $m\ge 0$
$$
  \|r_m\|  \,\le\,  \|(\F_{y_m})^{-1}\| \cdot \| \F_{x_m} - \F_{y_m}\|
    \le \|\F^{-1}\|_{C^0} \, \hk_\beta (\F)\, d(x_m, y_m)^\beta  \le c'\,d(x,y)^\beta \nu^{m\beta}.
$$
Using  \eqref{F bunch} it follows that
$$
\begin{aligned}
& \|(\F^{m}_y)^{-1}\circ r_{m}\circ \F^m_x\| \le
   \|(\F^{m}_y)^{-1}\| \cdot \|\F^m_x\|  \cdot c'\,d(x,y)^\beta \nu^{m\beta} \le \theta ^m \,c'\,d(x,y)^\beta .
 \end{aligned}
$$
 Therefore, for every $n\in \N$,
$$
  \|\Id-(\F^n_y)^{-1}\circ \F^n_x\|  \,\le\, \sum_{i=0}^{n-1}
  \| (\F^{i}_y)^{-1}\circ r_{i}\circ \F^i_x \| \le
c'\, d(x,y)^\beta \, \sum_{i=0}^{n-1}
 \theta^i \le c \,d(x,y)^\beta,
  $$
  where
   $$
 c= \text{\small {$\frac{c'}{1-\theta} $}} \,\le\,
 \text{\small {$ \frac { (\rho_i-2\e)^{-1} \hk_\beta (\F)}{1-\sigma  \nu^{\beta}}$}}  =k_3' \,  \hk_\beta (\F)
 \quad\text{with}\quad k_3'= (\rho_i-2\e)^{-1} (1-\sigma  \nu^{\beta})^{-1}.
   $$
   By  \eqref{hol def F} the sequence $\{(\F^n_y)^{-1}\circ \F^n_x\}$ converges to $\H^{\F,s}_{xy}$
 (in fact the  estimates imply that it is Cauchy) and the limit satisfies
 $$
\| \H^{s}_{\,x,y} - \Id \,\| \le c \, d (x,y)^{\beta} \;\text{ for any  } x\in \M \text{ and } y\in W^s_{\text{loc}}(x).
$$
By Lemma \ref{Bi est} we have
$
\hk_\beta (\F) =\hk_\beta(\B_i) \le k_2 \,  \hk_\beta (\B)
$
and we conclude that
 $$
\| \H^{s}_{\,x,y} - \Id \,\| \le k_3 \,\hk_\beta (\B) \, d (x,y)^{\beta} \;\text{ for any  } x\in \M \text{ and } y\in W^s_{\text{loc}}(x).
$$
This completes the proof of Lemma \ref{Hi est}
\end{proof}


\section{Proof of Theorem \ref{HolderConjugacy}} \label{proof HC}
Any two continuous conjugacies between $f$ and $A$ differ by an
element of the centralizer of $A$. By \cite[Corollary 1]{W}, any homeomorphism commuting with an
ergodic, in particular hyperbolic, automorphism $A$ is an affine automorphism, and hence all conjugacies
have the same regularity.

First, using Theorem~\ref{constant cocycle} we will show in Section \ref{C1H} that $H$ is a
$C^{1+\text{H\"older}}$ diffeomorphism, and moreover the H\"older constant of its derivative
satisfies the estimate
\begin{equation} \label{hkDH}
\hk_\beta(DH) \le k \, \| DH\|_{C^0} \, \|{f-A}\|_{C^{1+\beta}}.
\end{equation}
This part  does not rely on closeness of $H$ to the identity and the estimate applies to any conjugacy $H$.
Then  in Section \ref {PGE} we use \eqref{hkDH} and an interpolating inequality to obtain the desired estimate \eqref{C1H est} of $\|{H-I}\|_{C^{1+\beta}}$ for the conjugacy $C^0$ close to the identity.


\subsection{Proving that $H$ is a $C^{1+\text{H\"older}}$ diffeomorphism} \label{C1H} $\;$\\
First we recall some properties of a map $g \in W^{1,q}(\R^N,\R^N)$, $q>N$, which also extend
to the case when $g \in W^{1,q}(\T^N,\T^N)$. It is well known that, as a consequence of Morrey's
inequality, for any such $g$ the Jacoby matrix of weak partial derivatives gives the differential $D_x g$ 
 for almost every $x$ with respect to the Lebesgue measure $\mu$. Also, any such $g$ satisfies
 {\it Lusin's N-property}\, \cite{MM} 
that $\mu (E)=0$ implies $\mu (g(E))=0$, as well as
 {\it Morse-Sard property}\,  \cite{P} that  $\mu (g(\EuScript{C}_g))=0$ for the set of critical points of $g$
$$
\EuScript{C}_g=\{x\in \T^N:\; D_x g \text{ exists but is not invertible}\},
$$
see also \cite{KK} for sharper results and further references.

\vskip.1cm

Now we assume that $H\in W^{1,q}$ with $q>N$, so that the differential $D_x H$ exists $\mu$-a.e., and
for the set
$$
G_{ H}=\{x\in \T^N:\; D_x   H \text{ exists}\}
\;\text{ and its complement }\;E_{H}=\T^N \setminus G_{ H}
$$
we have $\mu(G_H)$=1 and $\mu(E_H)=0$.
Further $G_{ H}=\EuScript{C}_{ H}\cup R_{ H}$ is the disjoint union of two measurable sets,
the critical set $\EuScript{C}_{ H}$ and the regular set
$$
R_{ H}=\{x\in \T^N: \;D_x  H \text{ is invertible}\}.
$$
Since $f$ and $A$ are diffeomorphisms, it follows from the conjugacy equation  $H\circ f=A\circ H$ that the sets $G_{ H}$, $\EuScript{C}_{H}$, and $R_{ H}$ are $f$-invariant. Further, differentiating the equation on the set $G_H$ we obtain
\begin{equation} \label{DH2}
D_{fx} H \circ D_x f=A \circ D_x  H.
\end{equation}
Denoting $\c(x)= D_x H$ on the set $R_H$ we obtain the conjugacy equation over $f$
\begin{equation} \label{ABconj2}
A=\c (fx)  \circ  \B_{ x} \circ \c(x)^{-1} \quad \text{for cocycles $\; \B_x=D_{x} f$ and  $\A_x=A.$}
\end{equation}

Now we show that $ \mu (R_H)=1$ and also that $f$ preserves a measure $\tilde \mu$ equivalent to $\mu$.
Since $\mu (E_H)=0$, the Lusin's N-property of $H$ yields $\mu ( H(E_H))=0$. Also, we have
 $\mu ( H(\EuScript{C}_H))=0$ by the Morse-Sard property. Hence for $R'_H=H(R_H)$ we
 have $\mu(R'_H)=1$.
Now we consider the measure $\tilde \mu=(H^{-1})_*(\mu)$ and note that $\tilde \mu(R_H)=1$ as $\mu(R'_H)=1$.
Since $H$ is a topological conjugacy between $f$ and $A$, the measure $\tilde \mu$ is $f$-invariant and, in fact,
is the Bowen-Margulis measure of maximal entropy for $f$, since $\mu$ is that for $A$. Indeed, denoting the topological entropy by $\mathbf{h}_{top}$ and metric entropy with respect to $\tilde \mu$ by
$\mathbf{h}_{\tilde \mu}$ we get
$$
 \mathbf{h}_{\tilde \mu}(f)= \mathbf{h}_{\mu}(A) = \mathbf{h}_{top}(A) =\mathbf{h}_{top}(f).
$$
 In particular, $\tilde \mu$ is ergodic with full support and local product structure.
 Since $\c$ is a conjugacy between $\B$ and $A$  on $R_H$ with $\tilde \mu(R_H)=1$, by Lemma \ref{equal exp} we obtain that the Lyapunov exponents $\lambda_i^{f,\tilde \mu}$ of $\tilde \mu$ for the cocycle  $\B=D f$ are equal to the Lyapunov exponents $\lambda_i^{A}$ of $A$. Hence
 the sum of positive Lyapunov exponents (counted with multiplicities) for $\tilde \mu$  equals its entropy
$$
 \mathbf{h}_{\tilde \mu}(f)= \mathbf{h}_{\mu}(A)= \sum_{\lambda_i^{A} >0} \lambda_i ^A
 =\sum_{\lambda _i^{f,\tilde \mu} >0}\lambda _i^{f,\tilde \mu}.
$$
Thus we have equality in the Pesin-Ruelle formula, which implies that $\tilde \mu$  has absolutely continuous conditional measures on the unstable foliation of $f$ \cite{L}. Similarly, equality of the negative Lyapunov exponents yields that $\tilde \mu$ has absolutely continuous conditional measures on the
stable foliation of $f$. We conclude that $\tilde \mu$ itself is absolutely continuous. Moreover,
the density $\sigma (x)=\frac{d \tilde \mu}{ d \mu}$ is smooth and positive as a measurable solution
of the coboundary  equation $\sigma (fx)\sigma (x)^{-1}=\det Df(x)$. Thus $\tilde \mu$ is
equivalent to $\mu$, so that $\tilde \mu(R_H)=1$ implies $ \mu(R_H)=1$.

Provided that $\|A-\B_x\|_{C^0}=\|A-D_xf\|_{C^0} \le \|A-f\|_{C^1}< \delta$, where $\delta>0$ is from Theorem~\ref{constant cocycle}, we can apply this theorem with $f$ and $\tilde \mu$  to obtain that
$$\c(x)= D_x  H :\T^N \to GL(N,\R)$$
 coincides with a H\"older continuous function almost everywhere
with respect to $\tilde \mu$ and hence $\mu$. Since $H\in W^{1,q}$ we conclude that
$H$ is $C^{1+\text{H\"older}}$. Also, since $(D_x  H)^{-1}= \c(x)^{-1}$ exists and is also H\"older
continuous we see that $H$ is $C^{1+\text{H\"older}}$ diffeomorphism.
Further, Theorem~\ref{constant cocycle} gives us the estimate \eqref{hkDH},
which we will use to obtain the desired estimate for $\|{H-\Id}\|_{C^{1+\beta}}$ in Section \ref{PGE}.
This completes the proof that $H$ is $C^{1+\text{H\"older}}$ diffeomorphism assuming that $H\in W^{1,q}$.

\vskip.15cm

Now we consider the case when $\tilde H = H^{-1}$ is in $W^{1,q}$ and hence
 $D_x \tilde H$ exists $\mu$-a.e. We similarly define the sets $G_{\tilde H}$, $E_{\tilde H}$, $\EuScript{C}_{\tilde H}$, and $R_{\tilde H}$, which are measurable and $A$-invariant. Hence by ergodicity of $A$
 the set $R_{\tilde H}$ must be null or co-null for $\mu$.
If $\mu (R_{\tilde H})=0$  then $\mu ( \tilde H(R_{\tilde H}))=0$ by the Lusin's N-property of $\tilde H$,
 but this  is impossible since  $\mu (\tilde H(E_{\tilde H}))=0$ by the Lusin's N-property  and
  $\mu (\tilde H(\EuScript{C}_{\tilde H}))=0$ by the Morse-Sard property. Hence $\mu(R_{\tilde H})=1$.
  Then for $R_{\tilde H}'= \tilde H (R_{\tilde H} )$ we have $\tilde \mu(R_{\tilde H}')=1$, where
 as before $\tilde \mu=\tilde H_*(\mu)$ is the measure of maximal entropy for $f$.
Now the  Lusin's N-property of $\tilde H$ yields that $\tilde \mu$ is absolutely continuous and then
equivalent to $\mu$. Hence  we also have $ \mu(R_{\tilde H}')=1$. Since $H= \tilde H^{-1}$ is a
homeomorphism, and $D_x \tilde H$ is invertible for $x\in R_{\tilde H}$, it follows that
$D_y H=(D_x \tilde H)^{-1}$ is the differential of $H$ for each $y=\tilde H (x)$ in  $R_{\tilde H}'$.

Therefore, we can again differentiate $H\circ f=A\circ H$ to obtain \eqref{ABconj2} and then
the conjugacy equation \eqref{ABconj2} with $\c(x)= D_x H$ on the set  $R_{\tilde H}'$ of full
measure for both $\mu$ and $\tilde \mu$.
Then by Theorem~\ref{constant cocycle} applied with $f$ and $\tilde \mu$  we obtain that
$\c(x)= D_x  H $ is H\"older on $\T^N$ and hence so is $\c(y)^{-1}=D_x \tilde H$.
Since  $\tilde H = H^{-1}$ is in $W^{1,q}$ we conclude that $H^{-1}$ is $C^{1+\text{H\"older}}$
diffeomorphism. In this case we also get \eqref{hkDH}.


\subsection{Estimating $\|{H-I}\|_{C^{1+\beta}}$} \label{PGE}
We showed that any conjugacy $H$ is a $C^{1+\text{H\"older}}$ diffeomorphism satisfying \eqref{hkDH}.
Now we prove estimate \eqref{C1H est} for the conjugacy $H$ that is $C^0$ close to the identity.

Any two conjugacies  in the homotopy class of the identity differ by a composition with an affine automorphism commuting with $A$, which is translation $T_v(x) = x+v$, where $v\in \T^N $ is a
fixed point of $A$. It is well known that if $f$ is $C^1$-close to $A$, then it has a unique fixed
point $p$ which is the perturbation of $0$. More precisely, there are $0<\delta(A),r(A)<1/5$ and
$k(A)$ so that for each $f$ satisfying $\|{f-A}\|_{C^{1}}<\delta(A)$ there is a unique fixed point
$p=f(p)$ with $d(p,0)<r(A)$ and it satisfies
$$d(p,0)\le k(A)  \|{f-A}\|_{C^{0}}.$$
Since $H$ maps fixed points of $f$ to those of $A$ we see that if $ \|{H-I}\|_{C^{0}}<r(A)$ then
it is in the homotopy class of the identity and satisfies $H(p)=0$.

Replacing $f$ by $\tilde f= T_{-p} \circ f \circ T_p$ we can change $p$ to $0$.
Since for $\tilde f(x)=f(x+p)-p$ we have that
$$\|D \tilde f -A\|_{C^k}=\|D f -A\|_{C^k}\quad\text{for any $k\ge0$,}
$$
and so only $\|{f-A}\|_{C^{0}}$ is affected by this change. Moreover, if we write $f=A+R$,
then
$$
\tilde f(x)-A(x)=A(x+p)+R(x+p)-p-A(x)=R(x+p)+A(p)-p
$$
and hence
$$
\| \tilde f -A\|_{C^0}\le \|  R\|_{C^0}+ \|A(p)-p\|=\| f -A\|_{C^0}+ \|A(p)-f(p)\| \le 2\| f -A\|_{C^0}.
$$
Thus $\|{\tilde f-A}\|_{C^{1+\beta}}\le 2 \|{f-A}\|_{C^{1+\beta}}$. Also, if $\tilde H $ is the corresponding conjugacy between $\tilde f$ and $A$ then $H(x)=\tilde H (x-p)$ and hence
$$\|{H-\Id}\|_{C^{1+\beta}} \le \|{\tilde H-\Id}\|_{C^{1+\beta}} + d(p,0) \le \|{\tilde H-\Id}\|_{C^{1+\beta}} + k(A)  \|{f-A}\|_{C^{0}}
$$
Thus the estimate \eqref{hkDH} for $\tilde H$ via $\tilde f$ would yield the corresponding estimate
for $H$ via $f$. So without loss of generality we will assume that
$$f(0)=0 \;\text{  and }\;H(0)=0.
$$

 Now we recall how the conjugacy equation  $H\circ f=A\circ h$ can be rewritten using lifts.
We denote by $\bar f$ and $\bar H$ the lifts of $f$ and $H$ to $\R^N$ satisfying $\bar f(0)=0 \;\text{  and }\; \bar H(0)=0$ so that we have $\bar H\circ \bar f=A\circ \bar H$ where all maps are $\R^N \to \R^N$.
Since $H$ is homotopic to the identity and $f$ is homotopic to $A$ we can write
$$
\bar H=\Id +  h \quad\text{and}\quad  \bar f=A+ R,
$$ Then the commutation relation on $\R^N$
$$
(\Id +  h)\circ(A + R)=A\circ (\Id + h)\quad\text{yields}\quad
  h= A^{-1} (  h\circ \bar f)+A^{-1}   R.
$$
Since $ h,   R:\R^N \to \R^N$ are $\Z^N$-periodic we can view them as
$$
h=H-\Id :\; \T^N \to \R^N \quad\text{and}\quad  R=f-A  \; :\; \T^N \to \R^N
$$
and rewrite the conjugacy equation as one for $\R^N$-valued functions on $\T^N$
\begin{equation} \label{conj h}
h= A^{-1} (h\circ f)+A^{-1}  R.
\end{equation}

Using the $A$-invariant splitting $\R^N= E^u\oplus E^s$ we define the projections
$h_*$ and $R_*$ of $h$ and $R$ to $E^*$, where $*=s,u$, and obtain
\begin{equation} \label{H_*}
h_*= A_*^{-1} (h_*\circ f)+A^{-1}_*  R_*, \quad \text{where } A_*=A|_{E^*}.
\end{equation}
Thus $h_*$ is a fixed point of the affine operator
\begin{equation} \label{T*}
T_* (\psi )=A_*^{-1} (\psi \circ f)+A^{-1}_*  R_*
\end{equation}

Since $\|A_u^{-1}\|<1$, the operator $T_u$ is a contraction on the space $C^0(\T^d,E^u)$,
and thus $h_u$ is its unique fixed point
\begin{equation} \label{Hu}
h_u = \lim_{m\to \infty} T_u^m (0) \,= \sum_{m=0}^\infty A_u^{-m} (A^{-1}_u  R_u \circ f^m).
\end{equation}
Hence
\begin{equation} \label{hu norm}
\|h_u\|_{C^0} \,\le\, \sum_{m=0}^\infty \|A_u^{-1}\|^{m+1}  \|R_u\|_{C^0}
\,\le\, k\,   \|R_u\|_{C^0} \,\le\, k \,  \|A-f\|_{C^0} .
\end{equation}
Similarly, $h_s$ is the unique fixed point of contraction $T_s^{-1}$ and hence satisfies a similar estimate.
Combining them we conclude that
\begin{equation} \label{h norm}
\|H-\Id \|_{C^0}=\|h\|_{C^0} \le  k _0 \|R\|_{C^0} = k_0  \|A-f\|_{C^0} .
\end{equation}


\vskip .2cm

Now we estimate  $\|{H-\Id}\|_{C^{1+\beta}}$ using  \eqref{h norm},  \eqref{hkDH}, and  the following elementary interpolation lemma.
We note that $DH=\Id+Dh$, so that $\hk_{\beta}({Dh}) =\hk_{\beta}({DH})$.

\begin{lemma}\label{Int}
If $\,h:\T^N \to \R^N$ satisfies $\hk_{\beta}({Dh})\le K$ then
$$
\| Dh\|_{C^0} \le 8 \,\| h\|_{C^0}^{\beta/(1+\beta)}K^{1/(1+\beta)}.
$$
\end{lemma}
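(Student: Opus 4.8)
The plan is to run the classical one–dimensional Landau--Kolmogorov interpolation along a carefully chosen segment, working throughout with the $\Z^N$-periodic lift $h:\R^N\to\R^N$ so that the length of the segment is unconstrained. We may assume $\|h\|_{C^0}>0$ and $\hk_\beta(Dh)=:K>0$: if $K=0$ then $Dh$ is constant, and being the derivative of a periodic map it must vanish, so both sides are zero; if $\|h\|_{C^0}=0$ then $h\equiv0$.

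Set $M=\|Dh\|_{C^0}$. Since $\T^N$ is compact and $Dh$ is continuous, there are a point $x_0$ and a unit vector $v$ with $|D_{x_0}h\,(v)|=M$; let $w=D_{x_0}h\,(v)/M$, a unit vector. For $t\ge0$ I would consider the scalar function
\[
g(t)=\langle w,\; h(x_0+tv)-h(x_0)\rangle ,
\]
which satisfies $g'(t)=\langle w,\,D_{x_0+tv}h\,(v)\rangle$, hence $g'(0)=M$, and, using $\hk_\beta(Dh)\le K$ together with $|v|=|w|=1$,
\[
|g'(t)-g'(0)|\le\|D_{x_0+tv}h-D_{x_0}h\|\le K t^{\beta}\qquad(t\ge0),
\]
so that $g'(t)\ge M-Kt^{\beta}$ everywhere.

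Then for a free parameter $\ell>0$ I would integrate this to get $g(\ell)\ge M\ell-\tfrac{K}{1+\beta}\ell^{1+\beta}\ge M\ell-K\ell^{1+\beta}$, while on the other hand $g(\ell)\le|h(x_0+\ell v)-h(x_0)|\le 2\|h\|_{C^0}$. Comparing and dividing by $\ell$ yields $M\le 2\|h\|_{C^0}/\ell+K\ell^{\beta}$, and the choice $\ell=(\|h\|_{C^0}/K)^{1/(1+\beta)}$ balances the two terms, giving $M\le 3\,\|h\|_{C^0}^{\beta/(1+\beta)}K^{1/(1+\beta)}$, which is stronger than the claimed inequality; the crude constant $8$ in the statement comfortably absorbs the factor $\tfrac1{1+\beta}\le1$ that was discarded.

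There is no genuine difficulty here. The only two points that need a word of care are that one should argue with the periodic lift (so that the segment $t\mapsto x_0+tv$ may be taken as long as one wishes while still having $|h(x_0+\ell v)-h(x_0)|\le 2\|h\|_{C^0}$), and that the supremum $M=\|Dh\|_{C^0}$ is attained --- which holds by compactness of $\T^N$, or else one replaces $x_0$ by a point with $|D_{x_0}h\,(v)|\ge M-\epsilon$ and lets $\epsilon\downarrow0$.
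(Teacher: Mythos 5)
Your proof is correct and follows essentially the same argument as the paper's: pick the point and direction where $\|Dh\|_{C^0}$ is attained, project the increment of $h$ along the image direction, use $\hk_\beta(Dh)\le K$ to keep the directional derivative large on a segment, and compare with $2\|h\|_{C^0}$. The only difference is that you integrate the full lower bound $M-Kt^\beta$ and optimize the segment length (yielding the constant $3$), whereas the paper stops at the length where the derivative is still $\ge M/2$ (yielding $8$); both give the stated inequality.
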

\begin{proof}
Denote $b=\| Dh\|_{C^0}$ and choose $x\in \T^N$ such that $\|D_x h\|=b$. Then for some unit vectors
$u,v\in \R^N$ we have $(D_x h)u=bv.$ For $y\in \T^N$ let $b_y=\langle  (D_y h) u, v \rangle$, so $b_x=b$. Then
$$
|b-b_y| \le \|(D_x h) u -(D_y h) u\| \le K \, d(x,y)^\beta \le b/2 \;\; \text{ if } \;\;  d(x,y) \le (b/2K)^{1/\beta}
$$
and hence $b_y\ge b/2$ for such $y$. Consider $y(t)=x+tu$, with $0\le t \le t_0=(b/2K)^{1/\beta}$,
and $g(t)=\langle h(y(t)), v \rangle$.
Then
$$g'(t)=\langle  (D_y h) u, v \rangle=b_{y(t)} \ge b/2,
$$
and hence by integrating we get $  bt_0/2 \le g(t_0)-g(0)$. Since $|g(t_0)-g(0)| \le 2\| h\|_{C^0}$
we obtain $bt_0\le 4\| h\|_{C^0}$. Substituting $t_0=(b/2K)^{1/\beta}$ we obtain
$$
b(b/2K)^{1/\beta}\le 4\,\| h\|_{C^0} \;\Rightarrow\;
b ^{(1+\beta)/\beta} \le 4 \,\| h\|_{C^0}  (2K)^{1/\beta}
 \;\Rightarrow\; b\le 8 \,\| h\|_{C^0}^{\beta/(1+\beta)}K^{1/(1+\beta)}
$$
as $\,4^{\beta/(1+\beta)}2^{1/(1+\beta)}<8$.
\end{proof}
\vskip.3cm
We denote $a=\|h\|_{C^0}$, $b=\| Dh\|_{C^0}$, and $d=\|{f-A}\|_{C^{1+\beta}}$.
Then
$$\| DH\|_{C^0}=\| \Id +Dh\|_{C^0}\le 1+b,
$$
and hence \eqref{hkDH} implies that
\begin{equation}\label{1+b}
K=\hk_\beta(Dh)=\hk_\beta(DH)\le  k (1+b) d.
\end{equation}
Also, by \eqref{h norm} we have $a=\|h\|_{C^0}\le k_0d$.
Then Lemma \ref{Int}   gives
$$b\le 8 (kd)^{\beta/(1+\beta)}(k (1+b) d)^{1/(1+\beta)}< k_1  \,d  (1+b)^{1/(1+\beta)}.
$$
It follows that $b$ is bounded by some $k_2$ if $d\le1$.
Then \eqref{1+b} implies that
$$K=\hk_\beta(Dh)\le  k_3 d.
$$
 With this $K$ Lemma \ref{Int}  gives
$$b\le 8 (kd)^{\beta/(1+\beta)}(k_3 d)^{1/(1+\beta)} \le k_4 d.
$$
We conclude that
$$b=\| Dh\|_{C^0}< k_4 d, \quad a=\|h\|_{C^0}\le k_0\,d, \;\text{ and }\;\hk_\beta(Dh)\le  k_3  d,
$$
so that
$$\|{H-\Id}\|_{C^{1+\beta}}=\|h\|_{C^{1+\beta}} \le k_5 d= k_5 \,\|{f-A}\|_{C^{1+\beta}}.
$$
This completes the proof of Theorem \ref{HolderConjugacy}.



\section{Linearized conjugacy equation} \label{linearized}

In this section we begin the proof of Theorem \ref{th:4}, and in the next one we will complete it
using an iterative process.
In these sections we fix a hyperbolic matrix $A\in SL(N,\Z)$. We will use $K$ to denote any
constant that depends only on $A$, and $K_{x}$ to denote a constant that also depends on
a parameter $x$.

\subsection{Preliminaries}
 Set $\tilde{A}=(A^\tau)^{-1}$ where $A^\tau$ denotes transpose matrix.
We call $\tilde{A}$ the dual map on $\ZZ^N$.  Since $A$ is hyperbolic so is  $\tilde{A}$,
and we denote its stable and unstable subspaces by $\tilde E^s$ and $\tilde E^u$.
Thus there is $\rho>1$ ( $\rho < \min \{ \rho_{i_0+1}, \rho_{i_0}^{-1}\}$) such that
\begin{align}\label{for:9}
  \norm{\tilde{A}^kv}&\geq K\rho^k\norm{v}, \quad k\geq 0,\;v\in \tilde E^u, \\
  \norm{\tilde{A}^{-k}v}&\geq K\rho^{k}\norm{v}, \quad k\geq 0,\;v\in \tilde  E^s \notag.
\end{align}

For a subspace $V$ of $\RR^N$, we use $\pi_{V}$ to denote the (orthogonal) projection to $V$.
For any integer vector $n\in \Z^N$ we write $n_s= \pi_{\tilde E^s}n$ and $n_u= \pi_{\tilde E^u}n$.
Since $\tilde{A}\in SL(N,\Z)$ is hyperbolic, for any $0\ne n\in \Z^N$ both $n_s$ and  $n_u$ are nonzero
and there is a unique $k_0=k_0(n)\in \Z$ such that
$$
\begin{aligned}
&\| \tilde{A}^kn_s \| \ge \| \tilde{A}^kn_u\|
\quad \text{for all } k\le k_0\quad\text{and} \\
&\| \tilde{A}^kn_s \| < \| \tilde{A}^kn_u\|
\quad \text{for all } k> k_0.
\end{aligned}
$$
The corresponding element $\tilde{A}^{k_0(n)}  n$ on the orbit of $n$ will be called {\em minimal} and
 \begin{equation} \label{Min}
M=\{\tilde{A}^{k_0(n)}  n:\;0 \ne n\in\ZZ^N\} \subset \ZZ^N\backslash 0.
 \end{equation}
For any $n\in M$ we have $\|n_s\| \geq\frac 12 \|n\|$ and $\|\tilde{A}n_u\| >  \frac 12 \|\tilde{A}n\|$.

  \smallskip
For a function $\theta \in L^2(\T^N,\C)$ we denote its Fourier coefficients by $\hat \theta_n$, $n\in\ZZ^N$,
so that
$$\theta(x)=\sum_{n\in\ZZ^N}\widehat{\theta}_ne^{2\pi \mathbf{i} \, n\cdot x} \quad \text{in } L^2(\TT^N).$$
 We say that $\theta$ is \emph{excellent} (for $A$) if $\widehat{\theta}_n=0$ for all $n\notin M$.
 \vskip.2cm

To simplify our estimates, instead of the standard Sobolev spaces we will work the spaces
$\HH^s(\TT^N)$, $s>0$, defined as follows.
A function  $\theta\in L^2(\TT^N)$ belongs to $\HH^s(\TT^N)$ if
      \begin{align*}
       \norm{\theta}_s\overset{\text{def}}{=}\sup_n\,\abs{\widehat{\theta}_n}\norm{n}^s+\abs{\widehat{\theta}_0}<\infty.
      \end{align*}
The following relations hold (see, for example, Section 3.1 of \cite{LLAVE}). If $\sigma>N+1$ and $r\in\NN$, then for any $\theta\in C^r(\TT^N)$ and $\omega\in \HH^{r+\sigma}$
  we have $\theta\in \HH^{r}$ and $\omega\in C^r(\TT^N)$ with estimates
  \begin{align}\label{for:19}
   \norm{\theta}_r\leq K\norm{\theta}_{C^r}\quad \text{and}\quad\norm{\omega}_{C^r}\leq K\norm{\omega}_{r+\sigma}.
  \end{align}

For a vector-valued function $\theta :\T^N \to \C^m$ we denote its coordinate functions by $\theta _j$, $j=1,\dots,m$. We say that $\theta$ is in $\HH^s(\TT^N)$ if each $\theta_j$ is in $\HH^s(\TT^N)$  and set
  \begin{align*}
   \norm{\theta }_s\overset{\text{def}}{=}\max_{1\leq j\leq m}\,\norm{\theta _j}_s,\quad \widehat{\theta }_n\overset{\text{def}}{=}
((\widehat{\theta _1})_n,\dots,(\widehat{\theta _m})_n) \quad \text{  for any $n\in \ZZ^N$}
  \end{align*}
We say that $\theta $ is excellent if $\theta _j$ is excellent for each $j$.


\subsection{Twisted cohomological equation over $A$ in high regularity}$\;$

A crucial step in the iterative process is solving the twisted cohomological equation
\begin{equation}
  A\omega-\omega\circ A=\theta
\end{equation}
 over $A$, which can be viewed as the linearized conjugacy equation.
 In this section we give preliminary results on solving this equation in high regularity.
 We start with a scalar cohomological equation over $A$ twisted by  $\lambda\in\CC\backslash \{0,1\}$,
\begin{align} \label{lambda twist}
  \lambda\omega-\omega\circ A=\theta.
\end{align}
The next lemma  shows that the obstructions to solving it in $C^\infty$ category
 are sums of Fourier coefficients of $\theta$ along the orbits of $\tilde{A}$. Moreover, for any $C^\infty$
 function $\theta$ there is a well behaved splitting $\theta=\theta^\iota+\theta^*$, where $\theta^\iota$ can be view as a projection to the space of twisted coboundaries and $\theta^*$ as the error.
A similar result was proved for ergodic toral automorphisms in \cite{Damjanovic4} and used for establishing $C^\infty$ local rigidity of some partially hyperbolic $\ZZ^k$ actions.
We prove the result for hyperbolic case to keep our exposition self-contained and  get a better constant $\sigma (\lambda)$.

\begin{lemma}\label{le:3} For a function $\theta:\TT^N\to \CC$  in $\HH^{a}(\T^N)$ and $\lambda\in\CC\backslash  \{0,1\}$ we define
   \begin{align*}
  D_\theta(n)=\sum_{i=-\infty}^\infty\lambda^{-(i+1)}\widehat{\theta }_{\tilde{A}^{i}n}.
\end{align*}
Suppose $a\geq\sigma(\lambda)=\frac{|\log |\lambda| |}{\log\rho}+1$,  where $\rho>1$ is the expansion rate of $\tilde{A}$ from \eqref{for:9}. Then
\begin{itemize}
  \item[{\bf (i)}] The sum $D_\theta(n)$
converges absolutely for any $n\neq0$; moreover the function
\begin{align*}
 \theta^*\overset{\text{def}}{=}\sum_{n\in M}D_\theta(n)e^{2\pi \mathbf{i} \, n\cdot x},
\end{align*}
 where $M$ is from \eqref{Min}, is in $\HH^{a}(\T^N)$ with the estimate $\norm{\theta^*}_{a}\leq K_{a,\lambda}\norm{\theta}_{a}.$

\smallskip

\item[{\bf (ii)}] If $D_\theta(n)=0$ for any $n\neq0$, then the equation \eqref{lambda twist}
 has a solution $\omega\in\HH^{a}(\T^N)$ with the estimate
\begin{align*}
       \norm{\omega}_{a}&\leq K_{r,\lambda}\norm{\theta}_{a}.
      \end{align*}

  \smallskip

  \item[{\bf (iii)}] If  the equation \eqref{lambda twist}
has a solution $\omega\in\HH^{\sigma(\lambda)}(\T^N)$, then $D_\theta(n)=0$ for any $n\neq0$.


  \smallskip

  \item[{\bf (iv)}]
For $\theta^\iota\overset{\text{def}}{=}\theta-\theta^*$ the equation:
      \begin{align*}
 \lambda\omega-\omega\circ A=\theta^\iota
\end{align*}
has a solution $\omega\in\HH^{a}(\T^N)$ with the estimate $\; \norm{\omega}_{a}\leq K_{r,\lambda}\norm{\theta}_{a}. $
\end{itemize}

\end{lemma}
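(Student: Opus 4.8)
\emph{Proof plan.}
The plan is to pass to Fourier coefficients and read \eqref{lambda twist} as a family of scalar recursions along the $\tilde A$-orbits in $\ZZ^N$. Since the $n$-th Fourier coefficient of $\omega\circ A$ equals $\widehat\omega_{\tilde A n}$, equation \eqref{lambda twist} becomes
\begin{equation*}
\lambda\,\widehat\omega_n-\widehat\omega_{\tilde A n}=\widehat\theta_n,\qquad n\in\ZZ^N;
\end{equation*}
for $n=0$ this merely fixes $\widehat\omega_0=\widehat\theta_0/(\lambda-1)$, so the content is carried by the orbits of nonzero integer vectors, each of which is bi-infinite and meets $M$ exactly once. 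The one analytic input I would isolate at the outset is the geometric estimate coming from \eqref{for:9} and the defining property of $M$: for $n\neq0$ and all $i\in\ZZ$,
\begin{equation*}
\norm{\tilde A^i n}\ \geq\ K\,\rho^{\,\abs{i-k_0(n)}}\,\norm{\tilde A^{k_0(n)} n}\ \geq\ \tfrac12 K\,\rho^{\,\abs{i-k_0(n)}}\,\norm{n},
\end{equation*}
i.e. along an orbit the norm grows at least at rate $\rho$ away from the minimal point. Combined with the polynomial decay $\abs{\widehat\theta_n}\leq\norm{\theta}_a\norm{n}^{-a}$, this turns every estimate into summing a geometric series against a power, and the threshold $a\geq\sigma(\lambda)=\abs{\log\abs{\lambda}}/\log\rho+1$ is precisely what makes those series converge, the extra ``$+1$'' being needed only in part (iii).

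For parts (i)--(ii): when $n\in M$ one has $\abs{D_\theta(n)}\leq K\norm{\theta}_a\norm{n}^{-a}\sum_{i\in\ZZ}\abs{\lambda}^{-(i+1)}\rho^{-a\abs{i}}$, and splitting the sum at $i=0$, each half is a convergent geometric series exactly because $a>\abs{\log\abs{\lambda}}/\log\rho$; hence $\abs{D_\theta(n)}\leq K_{a,\lambda}\norm{\theta}_a\norm{n}^{-a}$, which propagates to all $n\neq0$ through the identity $D_\theta(\tilde A^j n)=\lambda^j D_\theta(n)$. Since $\theta^\ast$ is supported on $M$ and $0\notin M$, this gives $\widehat{\theta^\ast}_0=0$ and $\norm{\theta^\ast}_a\leq K_{a,\lambda}\norm{\theta}_a$. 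For (ii), when $D_\theta\equiv0$ off the origin, I would set $\widehat\omega_0=\widehat\theta_0/(\lambda-1)$ and, for $n\neq0$,
\begin{equation*}
\widehat\omega_n=\sum_{j\geq0}\lambda^{-(j+1)}\widehat\theta_{\tilde A^j n}=-\sum_{j\leq-1}\lambda^{-(j+1)}\widehat\theta_{\tilde A^j n},
\end{equation*}
the two sums agreeing because their difference is $D_\theta(n)=0$; a shift of index verifies that this $\omega$ solves \eqref{lambda twist}. To bound $\widehat\omega_n$ I would use the forward sum when $k_0(n)\leq0$ and the backward sum when $k_0(n)\geq0$, so that the terms being summed always lie on the expanding side of the orbit and are dominated by $\norm{\theta}_a\norm{n}^{-a}$ times a convergent geometric series; this yields $\norm{\omega}_a\leq K_{a,\lambda}\norm{\theta}_a$.

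For part (iii): given a solution $\omega\in\HH^{\sigma(\lambda)}$, fix $n\neq0$, put $a_i=\widehat\omega_{\tilde A^i n}$ and $b_i=\widehat\theta_{\tilde A^i n}$, and solve $a_{i+1}=\lambda a_i-b_i$ in both directions:
\begin{equation*}
\lambda^{-k}a_k=a_0-\sum_{i=0}^{k-1}\lambda^{-(i+1)}b_i,\qquad \lambda^{k}a_{-k}=a_0+\sum_{i=1}^{k}\lambda^{i-1}b_{-i}.
\end{equation*}
Because $\abs{a_{\pm k}}\leq\norm{\omega}_{\sigma(\lambda)}\norm{\tilde A^{\pm k}n}^{-\sigma(\lambda)}$ and $\norm{\tilde A^{\pm k}n}\geq c(n)\rho^{k}$ for large $k$, while the inequality $\sigma(\lambda)\log\rho>\abs{\log\abs{\lambda}}$ is \emph{strict}, both $\lambda^{-k}a_k$ and $\lambda^{k}a_{-k}$ tend to $0$; letting $k\to\infty$ in the two displays produces two expressions for $a_0$ whose difference is exactly $D_\theta(n)$, so $D_\theta(n)=0$.

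Finally, part (iv) reduces to verifying $D_{\theta^\iota}\equiv0$ off the origin, after which part (ii) applied to $\theta^\iota$ — which by (i) lies in $\HH^a$ with $\norm{\theta^\iota}_a\leq K_{a,\lambda}\norm{\theta}_a$ — provides $\omega$. Here I would use that $\widehat{\theta^\ast}$ is supported on $M$, so in $D_{\theta^\ast}(n)$ only the single index $i=k_0(n)$ survives, and that $D_\theta(\tilde A^j n)=\lambda^j D_\theta(n)$; together these show that $\theta^\ast$ is normalized exactly so that $D_{\theta^\ast}=D_\theta$, whence $D_{\theta^\iota}=0$. I expect the only non-routine points to be the bookkeeping around the exponent $\sigma(\lambda)$ — bare convergence of the geometric-against-polynomial sums suffices in (i), (ii), (iv), but (iii) needs the strict gap $\sigma(\lambda)\log\rho>\abs{\log\abs{\lambda}}$, which is why $\sigma(\lambda)$ carries the ``$+1$'' — and, in (ii), the forward/backward dichotomy, i.e. always summing toward the expanding side of each orbit so that the resulting bound on $\widehat\omega_n$ is uniform in $n$.
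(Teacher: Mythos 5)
Your proposal is correct and follows essentially the same route as the paper: passing to Fourier coefficients turns \eqref{lambda twist} into the recursion $\lambda\widehat{\omega}_n-\widehat{\omega}_{\tilde{A}n}=\widehat{\theta}_n$ along $\tilde{A}$-orbits, the bound for $D_\theta(n)$ on $M$ comes from geometric growth of $\norm{\tilde{A}^in}$ away from the minimal point together with the identity $D_\theta(\tilde{A}^kn)=\lambda^kD_\theta(n)$, part (ii) uses the same forward/backward dichotomy according to the sign of $k_0(n)$ (the paper phrases it via $\norm{\pi_{\tilde E^s}n}\geq\tfrac12\norm{n}$ versus $\norm{\pi_{\tilde E^u}\tilde{A}n}\geq\tfrac12\norm{\tilde{A}n}$), and your two-sided telescoping with vanishing boundary terms in (iii) is just an unwound version of the paper's computation $D_\theta(n)=\lambda D_\omega(n)-D_\omega(\tilde{A}n)=0$, both resting on the strict gap $\sigma(\lambda)\log\rho>\abs{\log\abs{\lambda}}$. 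One small caution: the final inequality in your displayed ``analytic input,'' namely $\norm{\tilde{A}^{k_0(n)}n}\geq\tfrac12\norm{n}$, is false for $n\notin M$ (the minimal point of an orbit is typically much shorter than $n$); this does not damage your argument, since you invoke it only for $n\in M$ and, in (ii), the required domination by $\norm{\theta}_a\norm{n}^{-a}$ on the expanding half-orbit follows from the projection inequalities above exactly as in the paper's estimate of $D_\theta(n)_{\pm}$.
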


\begin{remark}\label{remark:1} We emphasize that the existence of $\theta^*$ requires a high regularity of $\theta$. In fact, for any $b\leq \sigma (\lambda)$, we have to estimate it as
$\norm{\theta^*}_{b}\leq K_{\lambda}\norm{\theta}_{\sigma(\lambda)}.$
\end{remark}

\begin{proof} We define
$$
D_\theta(n)_+=\sum_{i\geq 1}\lambda^{-(i+1)}\widehat{\theta }_{\tilde{A}^{i}n}
 \quad\text{and}\quad
 D_\theta(n)_-=-\sum_{i\leq 0}\lambda^{-(i+1)}\widehat{\theta }_{\tilde{A}^{i}n}.
$$
{\bf (i)}.
Let  $n\in M$. The inequality $\|\pi_{\tilde E^s}(n)\|\ge \frac 12 \|n\|$ we obtain
\begin{align}\label{for:26}
 |D_\theta(n)_-|&\,\leq\, \norm{\theta}_a\sum_{i\leq 0}\,|\lambda|^{-(i+1)}\,\norm{\tilde{A}^in}^{-a}\leq\, \norm{\theta}_a\sum_{i\leq 0}\,|\lambda|^{-(i+1)} \,\norm{\pi_{\tilde E^s}(\tilde{A}^in)}^{-a}\notag\\
 &\leq\,
  \norm{\theta}_aC^{-a}\sum_{i\leq 0}\,|\lambda|^{-(i+1)}\rho^{ia}\,\norm{\pi_{\tilde E^s}(n)}^{-a}\overset{\text{(1)}}{\leq} \,K_{a,\lambda}\norm{\theta}_a\,\norm{n}^{-a}.
\end{align}
Here in $(1)$ convergence is guaranteed by $a>{\text {\small $\frac{|\log |\lambda| |}{\log\rho}$}}$.
The sum $D_\theta(n)_+$ can be estimated similarly using the inequality
$\|\pi_{\tilde E^u}(\tilde{A}n)\|\ge \frac 12 \|\tilde{A}n\|$. Hence we get
\begin{align*}
       \norm{\theta^*}_{a}&\leq K_{a,\lambda}\norm{\theta}_{a}.
      \end{align*}
For any $z\in \ZZ^N$ and $k\in \ZZ$, we see that
\begin{align}\label{for:29}
 D_\theta(\tilde{A}^kz)=\lambda^{k}D_\theta(z).
\end{align}
This shows that $D_\theta(n)$
converges absolutely for any $n\neq0$. 

\vskip.2cm

{\bf (ii)} In the dual space the equation $\lambda\omega-\omega\circ A=\theta$ has he form
\begin{align*}
 \lambda\widehat{\omega}_n-\widehat{\omega}_{\tilde{A}n}=\widehat{\theta}_n,\qquad \forall\,n\in\ZZ^N.
\end{align*}
For $n=0$,  we let $\widehat{\omega }_0=\frac{\widehat{\theta }_0}{\lambda-1}$. For any $n\neq0$, let $\widehat{\omega }_n=D_\theta(n)_-$. Then
$\omega=\sum_{n\in \ZZ^N}\widehat{\omega }_ne^{2\pi \mathbf{i}}$
 is a formal solution. Next, we obtain its Sobolev estimates.
 If $\|\pi_{\tilde E^s}(n)\|\ge \frac 12 \|n\|$, then
from \eqref{for:26} we have
\begin{align}\label{for:27}
 |\widehat{\omega }_n|\cdot \norm{n}^{a}\leq K_{a,\lambda}.
\end{align}
If $\|\pi_{\tilde E^u}(\tilde{A}n)\|\ge \frac 12 \|\tilde{A}n\|$, then the assumption $D_\theta(n)=0$ implies that $\widehat{\omega }_n=D_\theta(n)_+$. The arguments in {\bf (i)}
show that \eqref{for:27} still holds. 

\vskip.2cm
{\bf (iii)} By {\bf (i)} and \eqref{for:29} we have: for any $n\neq0$
\begin{align*}
 D_\theta(n)=D_{\lambda\omega-\omega\circ A}(n)=\lambda D_{\omega}(n)-D_{\omega}(\tilde{A}n)=\lambda D_{\omega}(n)-\lambda D_{\omega}(n)=0.
\end{align*}

\vskip.2cm
{\bf (iv)} It is clear that $D_{\theta^\iota}(n)=D_{\theta-\theta^*}(n)=D_{\theta}(n)-D_{\theta^*}(n)=0$ for any $n\neq0$. Then the result follows from {\bf (ii)}.
\end{proof}

Now we extend Lemma \ref{le:3} to the vector valued case. We consider  the equation
$$A_i\omega-\omega\circ A=\theta $$
with the twist given by the restriction $A_i=A|E^i$,
where  $E^i$, $i=1,\dots ,L$,\, is a subspace of the splitting \eqref{splitL}.
 We note that any eigenvalue $\lambda$ of $A_i$ satisfies $|\lambda|=\rho_i$.

\begin{lemma}\label{le:2} Let  $\rho>1$ be  the expansion rate for $\tilde{A}$ from \eqref{for:9} and let
\begin{equation}\label{sigma}
\sigma=\max_{i=1,\dots ,L} \left( {\text {\small $\frac{|\log \rho_i |}{\log\rho}$}}+1\right)N+N+2.
\end{equation}
Then for any $i=1,\dots ,L$ and any $C^\infty$ map $\theta:\TT^N\to \CC^{N_i}$, there is a splitting of $\theta$
  \begin{align*}
   \theta=\theta^\iota+\theta^*
  \end{align*}
  such that the equation:
      \begin{align}\label{for:1}
 A_i\omega-\omega\circ A=\theta^\iota
\end{align}
has a $C^\infty$ solution $\omega$ with estimates
\begin{align*}
       \norm{\omega}_{C^r}\leq K_r\norm{\theta}_{C^{r+\sigma }},\qquad \forall\,r\geq0;
      \end{align*}
  and $\theta^*:\TT^N\to \CC^{N_{i}}$ is an excellent $C^\infty$ map so that  for any $r\geq0$
      \begin{align*}
       \norm{\theta^*}_{C^r}\leq K_{r}\norm{\theta}_{C^{r+\sigma}}\qquad \text{and} \qquad
       \norm{\theta^*}_{r}\leq K_{r}\norm{\theta}_{r+\sigma-2-N}.
      \end{align*}
\end{lemma}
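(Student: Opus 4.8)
The plan is to reduce the vector-valued equation to finitely many scalar equations of the form \eqref{lambda twist}, handled by Lemma \ref{le:3}, by diagonalizing (or triangularizing) the twist $A_i$. Since $A_i = A|E^i$ has all eigenvalues of modulus $\rho_i$, pass to the complexification and choose a basis of $\CC^{N_i}$ in which $A_i$ is upper triangular with diagonal entries $\lambda_1,\dots,\lambda_{N_i}$, all satisfying $|\lambda_k|=\rho_i$. Writing $\theta=(\theta_1,\dots,\theta_{N_i})$ and $\omega=(\omega_1,\dots,\omega_{N_i})$ in this basis, the equation \eqref{for:1} becomes a triangular system: the last coordinate is a scalar twisted equation $\lambda_{N_i}\omega_{N_i}-\omega_{N_i}\circ A = \theta^\iota_{N_i}$, and each earlier coordinate has the form $\lambda_k\omega_k - \omega_k\circ A = \theta^\iota_k - \sum_{j>k} c_{kj}\,\omega_j$ where the correction involves the already-solved components $\omega_j$, $j>k$. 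First I would solve these scalars from the bottom up, applying Lemma \ref{le:3}(iv) to each: for the bottom coordinate, set $\theta^*_{N_i}=\theta_{N_i}^*$ (the obstruction part from Lemma \ref{le:3}(i) with $\lambda=\lambda_{N_i}$), $\theta^\iota_{N_i}=\theta_{N_i}-\theta^*_{N_i}$, and obtain $\omega_{N_i}\in\HH^a$ with $\|\omega_{N_i}\|_a\le K_{a}\|\theta\|_a$ for $a\ge\sigma(\lambda_{N_i})$. Then for each $k<N_i$, the right-hand side $\theta_k - \sum_{j>k}c_{kj}\omega_j$ is a known $C^\infty$ function whose Sobolev norms are controlled (with a fixed loss) by those of $\theta$; split it via Lemma \ref{le:3} and continue.

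The bookkeeping I would carry out is: (a) track the Sobolev loss at each of the $\le N_i\le N$ triangular steps, noting that each application of Lemma \ref{le:3} with $\lambda$ of modulus $\rho_i$ needs regularity $\ge \sigma(\lambda)=\frac{|\log\rho_i|}{\log\rho}+1$, so after $N$ steps we lose at most $N\cdot\max_i(\frac{|\log\rho_i|}{\log\rho}+1)$ derivatives in the $\HH$-scale; (b) convert between $C^r$ and $\HH^s$ norms using \eqref{for:19}, which costs an extra $\sigma_0:=N+1$ (rounded up) on each side — this accounts for the ``$+N+2$'' in \eqref{sigma} and for the discrepancy between the two estimates on $\theta^*$ (the $C^r$ estimate pays the full $C^r\to\HH\to C^r$ conversion, while the $\HH^r$ estimate pays only one conversion, hence the ``$-2-N$''). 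The cumulative error term is $\theta^* := $ the vector whose $k$-th coordinate is the obstruction piece produced at step $k$; since each coordinate obstruction is excellent (supported on $M$, by Lemma \ref{le:3}(i) and the definition of $\theta_k^*$), the vector $\theta^*$ is excellent in the sense defined before the lemma. Smoothness of $\omega$ and $\theta^*$ follows because the estimates $\|\omega\|_{C^r}\le K_r\|\theta\|_{C^{r+\sigma}}$ hold for every $r$, so $\omega\in\bigcap_r C^r = C^\infty$, and similarly for $\theta^*$.

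The main obstacle I anticipate is not any single step but keeping the constants and regularity losses honest through the triangular cascade: because solving $\omega_k$ feeds $\omega_j$ ($j>k$) back into the data, one must verify that the Sobolev norm of the modified right-hand side at step $k$ is bounded by $K\,\|\theta\|_{s+(\text{loss so far})}$ with a \emph{uniform} loss independent of which coordinate we are at — this works precisely because each $\omega_j$ was estimated in the \emph{same} regularity class as the data that produced it (Lemma \ref{le:3}(ii)(iv) give $\|\omega\|_a\le K_{a,\lambda}\|\theta\|_a$ with no loss in the $\HH$-scale), so the losses come only from the $C^r\leftrightarrow\HH$ conversions, not from the iteration. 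A secondary technical point is that the change of basis triangularizing $A_i$ and its inverse are fixed matrices depending only on $A$, so conjugating back to the standard basis of $\CC^{N_i}$ multiplies all norms by constants $K$ depending only on $A$, preserving excellence (which is a condition on the Fourier support, invariant under constant linear maps on the target). Assembling these gives the claimed estimates with $\sigma$ as in \eqref{sigma}.
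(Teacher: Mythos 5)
Your proposal is correct and follows essentially the same route as the paper: there the twist $A_i$ is put in Jordan normal form (a special upper triangularization) and the resulting triangular system is solved bottom-up by repeated application of Lemma \ref{le:3}, with the regularity loss budgeted as at most $N$ applications of $\sigma(\rho_i)$ plus the $C^r\leftrightarrow\HH^s$ conversions via \eqref{for:19}, and with excellence preserved under the constant change of basis exactly as you note. The only cosmetic difference is Jordan form versus a general triangularization of $A_i$, which does not affect the argument.
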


\begin{proof} If $A_i$ is semisimple, then the conclusion follows directly from
Lemma \ref{le:3} as the equation \eqref{for:1}  splits into finitely many equations of
the type
     \begin{align*}
 \lambda_j\omega_j-\omega_j\circ A=(\theta_j)^\iota
\end{align*}
where $\theta_j$ is a coordinate function of $\theta$ and $\lambda_j$ is the corresponding eigenvalue of $A_i$.

If $A_i$ is not semisimple, we choose a basis in which $A_i$ is in its Jordan normal form
with some nontrivial Jordan blocks. We note that the excellency of maps is preserved under
the change of basis. Let $J=(J_{l,j})$ to be an $m\times m$  Jordan block of $A_i$ corresponding
to an eigenvalue $\lambda$ with $|\lambda|=\rho_i$, that is, $J_{l,l}=\lambda$ for all $1\leq l\leq m$ and $\lambda_{l,l+1}=1$ for all $1\leq l\leq m-1$. Then equation \eqref{for:1} splits into equations
of the form
\begin{align}\label{for:4}
  J\Omega-\Omega\circ A=\Theta^\iota,
\end{align}
corresponding to the Jordan blocks $J$.  Each equation \eqref{for:4} further splits into the following $m$ equations:
    \begin{align*}
 \lambda\Omega_j-\Omega_j\circ A+\Omega_{j+1}&=(\Theta^\iota)_j, \qquad\text{and} \\
 \lambda\Omega_m-\Omega_m\circ A&=(\Theta^\iota)_m=(\Theta_m)^\iota,
\end{align*}
$1\leq j\leq m-1$. For the $m$-th equation, Lemma \ref{le:3} gives the splitting
\begin{align*}
   \Theta_m=\lambda\Omega_m-\Omega_m\circ A+(\Theta^*)_m
  \end{align*}
  where $\Omega_m$, $(\Theta^*)_m=(\Theta_m)^*$,  and $(\Theta^\iota)_m=\lambda\Omega_m-\Omega_m\circ A$ are $C^\infty$ functions
satisfying the estimates:
 \begin{align*}
    \max\{\norm{(\Theta^*)_m}_{r},\,\norm{\Omega_m}_{r}\}&\leq K_{r,m}\norm{\Theta}_{r+\sigma(\rho_i)},\qquad \forall\,r\geq0
      \end{align*}
      and $\Theta_m^*$ is excellent.

      Now we proceed by induction. Fix $1\leq k\leq m-1$ and assume that
for all $k+1\leq j\leq m$ we already have the splitting
\begin{align*}
   \Theta_j=\lambda\Omega_j-\Omega_j\circ A+\Omega_{j+1}+(\Theta^*)_j
  \end{align*}
   where $\Omega_j$, $\Theta_j^*$,  and $(\Theta^\iota)_j=\lambda\Omega_j-\Omega_j\circ A+\Omega_{j+1}$ are $C^\infty$ functions satisfying the estimates:
 \begin{align}\label{for:2}
       \max\{\norm{\Omega_j}_{r},\,\norm{(\Theta^*)_j}_{r}\}&\leq K_{r,j}\norm{\Theta}_{r+(m-j+1)\sigma(\rho_i)},\qquad \forall\,r\geq0
      \end{align}
      and $(\Theta^*)_j$ is excellent. By Lemma \ref{le:3} we obtain the splitting
      \begin{align*}
   \Theta_k-\Omega_{k+1}=\lambda\Omega_k-\Omega_k\circ A+(\Theta_k-\Omega_{k+1})^*
  \end{align*}
   where $\Omega_k$, $(\Theta^*)_k=(\Theta_k-\Omega_{k+1})^*$,  and $(\Theta^\iota)_k=\lambda\Omega_k-\Omega_k\circ A+\Omega_{k+1}$ are $C^\infty$ functions satisfying the estimates following from \eqref{for:2}:
 \begin{align*}
       \max\{\norm{\Omega_k}_{r},\,\norm{(\Theta^*)_k}_{r}\}&\leq K_{r}\norm{\Theta_k-\Omega_{k+1}}_{r+\sigma(\rho_i)}\leq K_{r,k}\norm{\Theta}_{r+(m-k+1)\sigma(\rho_i)}, \quad \forall\,r\geq0
      \end{align*}
      and $(\Theta^*)_k$ is excellent. Let $\Omega$, $\Theta^\iota$ and $\Theta^*$ be maps with coordinate functions $\Omega_j$, $(\Theta^\iota)_j$ and $(\Theta^*)_j$, $1\leq j\leq m$ respectively.  Hence we show that
      there is a splitting of $\Theta$
  \begin{align*}
   \Theta=\Theta^\iota+\Theta^*
  \end{align*}
  such that the equation \eqref{for:4} has a $C^\infty$ solution $\Omega$ with estimates.
\begin{align*}
       \max\{\norm{\Theta^*}_{r},\,\norm{\Omega}_{r}\}&\leq K_{r}\norm{\Theta}_{r+m\sigma(\rho_i)}, \quad \forall\,r\geq0
      \end{align*}
      This can be
repeated for all corresponding blocks of $A$. Since the maximal size of a
Jordan block is bounded by $N$, we obtain estimates for the $\norm{\cdot}_r$ norms of $\omega$ and $\theta^*$. This implies estimates for the $\norm{\cdot}_{C^r}$ norms
as well by \eqref{for:19}.
\end{proof}

\subsection{Main result on the linearized equation.}
 The next theorem is our main result on solving the linearized equation.
It  plays the crucial role in the inductive step of the iterative process, Proposition \ref{po:1}.
The goal of the inductive step is, given a $C^{1}$ conjugacy $H$ between $A$ and its perturbation $f$, to construct a smaller perturbation $\tilde f$ which is smoothly conjugate to $f$ by $\tilde H$. The conjugacy $\tilde H$ is constructed in the form $\tilde H = I - \omega$, where $\omega$ is a $C^\infty$ approximate solution of the linearized equation given by Theorem \ref{th:3}.
The $C^{1}$ conjugacy $H$ is upgraded to $C^{1+a}$ by Theorem \ref{HolderConjugacy}. It yields
an {\it approximate}  $C^{1+a}$ solution $\mathfrak{h}=H-I$ of the linearized equation \eqref{for:11}. This necessitates the introduction of the error term $\Psi$ in the assumption of the theorem.

\begin{theorem}\label{th:3} Let $A$ be weakly irreducible hyperbolic automorphism of $\T^N$. Suppose that
\begin{align}\label{for:11}
A \mathfrak{h}-\mathfrak{h}\circ A=\mathcal{R}+\Psi,
\end{align}
where maps $\mathfrak{h},\Psi :\T^N\to \R^N$ are  $C^{1+a}$  and $\mathcal{R}:\T^N\to \R^N$ is $C^{\infty}$.
\vskip.1cm

Then there exist  $C^\infty$ maps $\omega, \Phi :\T^N\to \R^N$
satisfying the equation
\begin{align}\label{for1}
 \mathcal{R}=A \omega-\omega\circ A+\Phi
\end{align}
and the estimates
\begin{gather*}
       \norm{\omega}_{C^r}\leq K_r\norm{\mathcal{R}}_{C^{r+\sigma}}\\
       \norm{\Phi}_{C^0}\leq K_{l,a}(\norm{\Psi}_{C^{1+a}})^{\frac{l-2-N}{l+N}}(\norm{\mathcal{R}}_{C^{l+\sigma}})^{\frac{2N+2}{l+N}}
      \end{gather*}
for any $r\geq0$ and $l>N+2$, where $\sigma$ is given by \eqref{sigma}.

\end{theorem}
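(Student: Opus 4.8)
The plan is to construct $\omega$ by applying Lemma \ref{le:2} to $\mathcal R$ block by block, to take the resulting ``excellent'' remainder as $\Phi$, and then to use the hypothesis \eqref{for:11}, sharpened by one directional derivative, to show that the Fourier coefficients of $\Phi$ are governed by $\Psi$. Concretely, pass to the $A$‑invariant splitting $\R^N=\bigoplus_{i=1}^{L}E^i$ of \eqref{splitL}; since $A$ preserves it, writing $\mathcal R_i=\pi_{E^i}\mathcal R$, equation \eqref{for1} decouples into $\mathcal R_i=A_i\omega_i-\omega_i\circ A+\Phi_i$. Lemma \ref{le:2} applied to each $C^\infty$ map $\mathcal R_i:\T^N\to\C^{N_i}$ produces a splitting $\mathcal R_i=\mathcal R_i^{\iota}+\mathcal R_i^{*}$, a $C^\infty$ solution $\omega_i$ of $A_i\omega_i-\omega_i\circ A=\mathcal R_i^{\iota}$ with $\|\omega_i\|_{C^r}\le K_r\|\mathcal R\|_{C^{r+\sigma}}$, and an excellent $C^\infty$ map $\mathcal R_i^{*}$ with $\|\mathcal R_i^{*}\|_{r}\le K_r\|\mathcal R\|_{r+\sigma-2-N}$. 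Then $\omega=\sum_i\omega_i$ and $\Phi=\sum_i\mathcal R_i^{*}$ are $C^\infty$, satisfy \eqref{for1} and the first estimate, and $\Phi$ is excellent, so $\widehat\Phi_n=0$ for $n\notin M$; moreover the bound on $\|\mathcal R_i^{*}\|_r$ gives the ``high-norm'' control $\|n\|^{r}|\widehat\Phi_n|\le K_r\|\mathcal R\|_{C^{r+\sigma}}$ for every $r\ge0$. It remains to bound $|\widehat\Phi_n|$ in terms of $\|\Psi\|_{C^{1+a}}$.

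For $n\in M$ the coefficient $\widehat\Phi_n$ is, by the construction in Lemmas \ref{le:3} and \ref{le:2}, a finite combination — obtained by iterating over the entries of the Jordan blocks of $A_i$ as in the proof of Lemma \ref{le:2} — of the obstruction functionals $D^{(\lambda)}_{\theta}(n)$ of Lemma \ref{le:3} (with twist $\lambda$, $|\lambda|=\rho_i$) applied to the coordinates of $\mathcal R_i$. So it suffices to estimate each scalar obstruction $D^{(\lambda)}_{\mathcal R_{ij}}(n)$, where in an eigenbasis of $A_i$ the hypothesis \eqref{for:11} becomes $\lambda\,\mathfrak{h}_{ij}-\mathfrak{h}_{ij}\circ A=\mathcal R_{ij}+\Psi_{ij}$. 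The telescoping in the proof of Lemma \ref{le:3}(iii) would give $D^{(\lambda)}_{\mathcal R_{ij}}(n)=-D^{(\lambda)}_{\Psi_{ij}}(n)$ if $\mathfrak{h}_{ij}$ lay in $\HH^{\sigma(\lambda)}$; but $\mathfrak{h}$ is only assumed $C^{1+a}$, and in general $\sigma(\rho_i)>1$, so that telescoping sum need not converge. \emph{This regularity gap is the main obstacle.}

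To close it I ``balance the twist'' with one directional derivative. Pick an eigenvector $v\in E^i$ of $A$ with $Av=\mu v$, $|\mu|=\rho_i$; weak irreducibility lets one choose $v$ from a fixed eigenbasis of $E^i$ with $n\cdot v\neq0$, for otherwise the nonzero integer vector $n$ would generate a rational $A^{\tau}$‑invariant subspace contained in $(E^i)^{\perp}$ — a subspace carrying no eigendirection of modulus $\rho_i$, contradicting Lemma \ref{weak irred}. Differentiating \eqref{for:11} along $v$ and dividing by $\mu$ gives $(\lambda/\mu)\,\partial_v\mathfrak{h}_{ij}-(\partial_v\mathfrak{h}_{ij})\circ A=\mu^{-1}(\partial_v\mathcal R_{ij}+\partial_v\Psi_{ij})$, a twisted equation whose twist has modulus $1$; for a unit‑modulus twist the telescoping sum converges absolutely for any positive Hölder exponent, so the $C^{a}$ regularity of $\partial_v\mathfrak{h}_{ij}$ and $\partial_v\Psi_{ij}$ now suffices, yielding $D^{(\lambda/\mu)}_{\partial_v\mathcal R_{ij}}(n)=-D^{(\lambda/\mu)}_{\partial_v\Psi_{ij}}(n)$ and $|D^{(\lambda/\mu)}_{\partial_v\Psi_{ij}}(n)|\le K_a\|\Psi\|_{C^{1+a}}\|n\|^{-a}$. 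Since $\widehat{\partial_v\theta}_m=2\pi\mathbf{i}\,(m\cdot v)\widehat\theta_m$ and $\tilde A^{p}n\cdot v=\mu^{-p}(n\cdot v)$, a direct computation gives $D^{(\lambda/\mu)}_{\partial_v\mathcal R_{ij}}(n)=2\pi\mathbf{i}\,\mu\,(n\cdot v)\,D^{(\lambda)}_{\mathcal R_{ij}}(n)$; dividing, and using a polynomial lower bound $|n\cdot v|\ge c\,\|n\|^{-c_0}$ valid because $v$ has algebraic coordinates, yields $|D^{(\lambda)}_{\mathcal R_{ij}}(n)|\le K_a\,\|\Psi\|_{C^{1+a}}\,\|n\|^{-a+c_0}$. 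Carrying this through the Jordan iteration of Lemma \ref{le:2} gives, for $n\in M$, $|\widehat\Phi_n|\le K_a\,\|\Psi\|_{C^{1+a}}\,\|n\|^{b_0}$ with $b_0=b_0(A)$. This step — relating Fourier coefficients of a function to those of its directional derivatives — is the only place weak irreducibility is used.

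Finally, interpolate the two bounds on $|\widehat\Phi_n|$: for $n\in M$,
\[
|\widehat\Phi_n|\ \le\ \bigl(K_a\|\Psi\|_{C^{1+a}}\|n\|^{b_0}\bigr)^{\theta}\bigl(K_r\|\mathcal R\|_{C^{l+\sigma}}\|n\|^{-r}\bigr)^{1-\theta},
\]
and sum over $\Z^N$. Taking $r=l+2+N$ and $\theta=\tfrac{l-2-N}{l+N}$ (so $1-\theta=\tfrac{2N+2}{l+N}$), the exponent of $\|n\|$ is $<-N$ for $l$ large, the series converges, and
\[
\|\Phi\|_{C^0}\le\sum_{n}|\widehat\Phi_n|\le K_{l,a}\bigl(\|\Psi\|_{C^{1+a}}\bigr)^{\frac{l-2-N}{l+N}}\bigl(\|\mathcal R\|_{C^{l+\sigma}}\bigr)^{\frac{2N+2}{l+N}},
\]
which is the second estimate. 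The crux is the low-norm bound of the preceding paragraph — forcing the telescoping of Lemma \ref{le:3} to work when $\mathfrak{h}$ is only $C^{1+a}$, which is exactly what the directional-derivative device achieves.
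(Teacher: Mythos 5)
Your overall architecture coincides with the paper's: decompose along $\R^N=\oplus_i E^i$, apply Lemma \ref{le:2} blockwise to define $\omega$ and the excellent remainder $\Phi$, bound $\widehat{\Phi}_n$ for $n\in M$ by differentiating the hypothesis \eqref{for:11} so as to ``balance the twist,'' and finish with exactly the interpolation the paper uses (your choice $r=l+2+N$ against the $\HH$-norm bound of Lemma \ref{le:2} is the same bookkeeping). The genuine gap is in the differentiation device. You differentiate along an eigenvector $v\in E^i$ of $A$, which presupposes that the eigenvectors of $A_i$ span $E^i$; but the theorem assumes only weak irreducibility, which explicitly allows nontrivial Jordan blocks (Section 3.3, e.g. $\begin{pmatrix} A & I\\ 0 & A\end{pmatrix}$). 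When $A_i$ is not semisimple there is no eigenbasis of $E^i$, and your non-vanishing claim for $n\cdot v$ fails: if $n$ is orthogonal to every eigenvector of $A$ in $E^i$, the rational $A^\tau$-invariant subspace generated by $n$ lies in the orthocomplement of the \emph{span of those eigenvectors}, which is strictly larger than $(E^i)^\perp=\hat E^i_\tau$, so no contradiction with Lemma \ref{weak irred} arises. Concretely, for $B=\begin{pmatrix} A & I\\ 0 & A\end{pmatrix}$ with $A=\begin{pmatrix}2&1\\1&1\end{pmatrix}$ and $\rho_i=\lambda>1$, every eigenvector of $B$ of modulus $\lambda$ has the form $(v_\lambda,0)$, so every integer vector $n=(0,n')\neq0$ is orthogonal to all of them; for such $n$ the factor $n\cdot v$ vanishes, $D^{(\lambda)}_{\mathcal R_{ij}}(n)$ cannot be recovered from the differentiated equation, and your low-norm bound on $\widehat{\Phi}_n$ breaks down precisely in the situation the theorem must cover. (A lesser point: in the Jordan recursion of Lemma \ref{le:2} the coefficients of $\Phi$ are obstructions of $\Theta_k-\Omega_{k+1}$, not of the coordinates of $\mathcal R_i$ alone, so the reduction to scalar functionals also needs more care there.)

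The paper avoids both problems by working with $\mathfrak p_i=\mathfrak h_i-\omega_i$ and $\Lambda_i=-\mathcal R_i^*+\Psi_i$, differentiating $A_i\mathfrak p_i-\mathfrak p_i\circ A=\Lambda_i$ along the \emph{moving} unit vectors $u_k=A^k u_0/\norm{A^k u_0}$ for each vector $u_0=v_{i,j}$ of an orthonormal basis of $E^i$, and compensating the twist by the matrices $P_k$ of \eqref{for:20}, whose norms grow only polynomially by \eqref{jordan}; Lemma \ref{cor:2} then yields the vanishing and the $\norm{n}^{-a}$ bound with only $C^a$ data, and Lemma \ref{le:4} (via Katznelson's Lemma) replaces your single lower bound on $|n\cdot v|$ by a bound on $\norm{\pi_{E^i}n}$, which is exactly where weak irreducibility enters and which survives Jordan blocks. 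Your argument is essentially correct, and somewhat simpler, in the special case where every $A_i$ is semisimple (with routine complexification for complex eigenvalues), but to prove the stated theorem you must replace the fixed-eigenvector device by the orbit-derivative/$P_k$ mechanism or an equivalent treatment of generalized eigendirections.
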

For traditional KAM iteration scheme, the convergence requires the error $\Phi$ in solving the twisted coboundary \eqref{for1} to be small compared with $\mathcal{R}$.
This is established by showing that $\Phi$ is tame with respect to $\Psi$, which is almost quadratically small with respect to $\mathcal{R}$. Tameness means that the $C^r$ norm
of $\Phi$ can be bounded by the $C^{r+p}$ norm of $\Psi$, where $r$ is arbitrarily large while $p$ is a constant.

One difficulty  in our setting is that the estimate of $\Phi$ depends on $\Psi$ and $\mathcal{R}$ rather than on $\Psi$ only.  This results in technical issues in proving convergence of the iterative
procedure, and so the traditional KAM scheme fails to work. We resolve this issue by introducing a parameter $l$ when estimating $\norm{\Phi}_{C^0}$. If the parameters are well chosen, the
constructed approximation behaves as if it were tame.

The main difficulty in estimating $\Phi$ in our setting is  that low regularity of $\mathfrak{h}$ yields smallness of $\Psi$ only in $C^{1+\text{H\"{o}lder}}$ norm, see Lemma \ref{le:7} and equation \eqref{for2}. This does not allow us to directly estimate  orbit sums of Fourier
coefficients   and split $R$ into a smooth coboundary $R^\iota=A \omega-\omega\circ A$
and an error term $R^*=\Phi$, see Remark \ref{remark:1}.
To overcome this problem we use the splitting $\R^N=\oplus E^i$ to decompose the equation \eqref{for:11} and then differentiate $i^{th}$ component along directions in $E^i$.
This allows us to ``balance" the twist (up to a polynomial growth of Jordan blocks) and
 analyze the differentiated equation using H\"older regularity. This is done in the following
 Lemma \ref{cor:2}.
After that, we establish Lemma \ref{le:4} to relate Fourier coefficients of a function and its
directional derivatives. We then complete the proof of Theorem \ref{th:3} in Section \ref{proof th:3}.

\vskip.3cm

Now we begin the analysis of the differentiated equation \eqref{for:11}. For any $1 \le i\le L$ and any unit vector $u_0 \in E^i$, we consider unit vectors $u_k$ and scalars $a_k$, $k\in \Z$,
given by
\begin{equation}\label{u_k}
u_k= \text{\small $\frac{A_i^ku_0}{\norm{A_i^ku_0}}$} \quad \text{and} \quad
a_k=\norm{A_iu_k}= \text{\small $\frac{\norm{A_i^{k+1}u_0}}{\norm{A_i^ku_0}}$}\quad \text{so that} \quad
A_iu_k=a_ku_{k+1}.
\end{equation}
We define a sequence of matrices $P_k\in GL(N_i,\R)$ which commute with $A_i$ and satisfy the recursive equation
\begin{equation}\label{P_k}
P_{k+1}=a_kA_i^{-1}P_k.
\end{equation}
Specifically, we set
\begin{align}\label{for:20}
P_0=\Id \quad\text{and}\quad P_k=&\left\{\begin{aligned} &\,a_0\cdots a_{k-1}A_i^{-k}=
\norm{A_i^{k}u_0}\,A_i^{-k},&\quad &k>0,\\
&\,(a_{-1}\cdots a_{-k})^{-1}A_i^{k}=\norm{A_i^{-k}u_0}\,A_i^{k},&\quad &k<0.
\end{aligned}
 \right.
\end{align}


\begin{lemma}\label{cor:2}
Let $\varphi_k: \TT^N\to \R^{N_i}$ be a sequence of maps in $\HH^{a}(\T^N)$, $a>0$, satisfying $\norm{\varphi_k}_a\leq \mathfrak{b}$ for all $k\in\ZZ$, let $P_k\in GL(N_i,\R)$ be as in \eqref{for:20}, and let
   \begin{align*}
  S(n)=\sum_{k\in\Z}P_k\,(\widehat{\varphi_k})_{\tilde{A}^kn}.
\end{align*}
\begin{itemize}
  \item[{\bf (i)}]\label{for:15} For any $n\in M$ the sum $S(n)$
converges absolutely in $\C^{N_i}$ with the estimate \\
$\norm{S(n)}\leq K_{a}\mathfrak{b}\,\norm{n}^{-a}$.
  \smallskip

  \item[{\bf (ii)}]\label{for:16}
  If $\mathfrak{h}_k: \TT^N\to \R^{N_i}$ is another sequence in $\HH^{a}(\T^N)$
so that  for all $k\in\ZZ$ we have $\norm{\mathfrak{h}_k}_a\leq \mathfrak{c}$ and 
\begin{align}\label{for:13}
A_i \mathfrak{h}_k-a_k\mathfrak{h}_{k+1}\circ A=\varphi_k,
\end{align}
then $S(n)=0$ for every $n\in M$.
\end{itemize}
\end{lemma}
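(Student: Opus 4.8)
The plan is to reduce both parts to two algebraic facts about the matrices $P_k$ together with one uniform exponential expansion estimate for $\tilde A$ along minimal frequencies. By \eqref{for:20} each $P_k$ is a scalar multiple of a power of $A_i$, hence commutes with $A_i$; rewriting the recursion \eqref{P_k} gives the identity $A_iP_{k+1}=a_kP_k$; and applying the Jordan estimate \eqref{jordan} to $A_i^{k}$ and $A_i^{-k}$ yields a polynomial bound $\norm{P_k}\leq K(\abs{k}+1)^{2N}$ valid for all $k\in\ZZ$. These are the only properties of $P_k$ that I will use.

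The second ingredient is that for every $n\in M$ there is $c=c(A)>0$ with
\[
\norm{\tilde A^{k}n}\;\geq\; c\,\rho^{\abs{k}}\,\norm{n}\qquad\text{for all }k\in\ZZ .
\]
To prove this I would pass to an adapted inner product for $\tilde A$ making $\tilde E^s\perp\tilde E^u$; then for $k\leq0$ it follows from $\norm{n_s}\geq\tfrac12\norm{n}$ (a defining property of $M$) and the lower bound for $\tilde A^{-k}$ on $\tilde E^s$ in \eqref{for:9}, while for $k\geq1$ it follows from $\norm{\tilde An_u}\geq\tfrac12\norm{\tilde An}$, the lower bound for $\tilde A^{k-1}$ on $\tilde E^u$, and $\norm{\tilde An}\geq\norm{\tilde A^{-1}}^{-1}\norm{n}$. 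This expansion estimate is the delicate point, and the main obstacle: minimality of $n$ must be exploited to treat $k\geq1$ and $k\leq0$ on equal footing, and it is exactly what forces the bi-infinite sums below to converge.

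For part {\bf (i)} I would combine the two ingredients: since $\norm{(\widehat{\varphi_k})_m}\leq\mathfrak b\,\norm{m}^{-a}$ for $m\neq0$, for $n\in M$ we get
\[
\norm{P_k\,(\widehat{\varphi_k})_{\tilde A^{k}n}}\;\leq\;K(\abs{k}+1)^{2N}\,\mathfrak b\,\norm{\tilde A^{k}n}^{-a}\;\leq\;K\,c^{-a}\,(\abs{k}+1)^{2N}\,\rho^{-a\abs{k}}\,\mathfrak b\,\norm{n}^{-a}.
\]
As $\rho>1$ and $a>0$, the series $\sum_{k\in\ZZ}(\abs{k}+1)^{2N}\rho^{-a\abs{k}}$ converges to a constant $K_a$, which gives absolute convergence of $S(n)$ together with the bound $\norm{S(n)}\leq K_a\,\mathfrak b\,\norm{n}^{-a}$.

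For part {\bf (ii)} I would first Fourier-transform \eqref{for:13}, using that the Fourier coefficient of $g\circ A$ at $m$ equals $\widehat g_{\tilde Am}$, obtaining $A_i(\widehat{\mathfrak h_k})_m-a_k(\widehat{\mathfrak h_{k+1}})_{\tilde Am}=(\widehat{\varphi_k})_m$ for every $m$. Substituting $m=\tilde A^{k}n$ in the definition of $S(n)$ and using $P_kA_i=A_iP_k$ and $a_kP_k=A_iP_{k+1}$, the $k$-th summand becomes $A_ib_k-A_ib_{k+1}$ with $b_k=P_k(\widehat{\mathfrak h_k})_{\tilde A^{k}n}$. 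The hypothesis $\norm{\mathfrak h_k}_a\leq\mathfrak c$ and the estimates from part (i) give $\norm{b_k}\leq K\,c^{-a}(\abs{k}+1)^{2N}\rho^{-a\abs{k}}\,\mathfrak c\,\norm{n}^{-a}$, so $\sum_k\norm{A_ib_k}<\infty$; hence the two series $\sum_kA_ib_k$ and $\sum_kA_ib_{k+1}$ converge absolutely, are reindexings of one another, and cancel, giving $S(n)=0$. The remaining routine points — the explicit polynomial constant in $\norm{P_k}$ and the choice of adapted norm — I would dispatch briefly.
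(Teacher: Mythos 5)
Your proof is correct and follows essentially the same route as the paper's: the polynomial bound $\norm{P_k}\le K(\abs{k}+1)^{2N}$ from \eqref{jordan}, the two minimality inequalities $\norm{n_s}\ge\tfrac12\norm{n}$ and $\norm{\tilde{A}n_u}\ge\tfrac12\norm{\tilde{A}n}$ giving the decay of $\norm{\tilde{A}^kn}^{-a}$ for $n\in M$ in both directions, and the telescoping identity $a_kP_k=A_iP_{k+1}=P_{k+1}A_i$ for part (ii). The only cosmetic difference is that you telescope directly in Fourier space and cancel the two absolutely convergent reindexed series, whereas the paper telescopes the function-level partial sums and then sends the boundary terms $A_iP_{-m}(\widehat{\mathfrak{h}_{-m}})_{\tilde{A}^{-m}n}$ and $a_jP_j(\widehat{\mathfrak{h}_{j+1}})_{\tilde{A}^{j+1}n}$ to zero, both steps resting on the same part (i) estimate applied to $\mathfrak{h}_k$.
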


\begin{proof}
{\bf (i)}.  Since all eigenvalues of $A_i$ have the same modulus $\rho_i$, we have \eqref{jordan}, and so there exists a constant $C$ such that all $P_k$ satisfy the polynomial estimate
\begin{align}\label{for:3}
\norm{P_k}\le \norm{A_i^{k}}\cdot \norm{A_i^{-k}}\leq C(\abs{k}+1)^{2N}=:p(|k|),\quad \text{for all }k\in \Z.
\end{align}

Let $n\in M.$  We write $\varphi_k=(\varphi_{k,1},\cdots,\varphi_{k,N_i})$ and set
$$
S(n)_+=\sum_{k\geq 1}P_k\,(\widehat{\varphi_k})_{\tilde{A}^kn}
 \quad\text{and}\quad
 S(n)_-=\sum_{k\leq 0}P_k\,(\widehat{\varphi_k})_{\tilde{A}^kn}.
$$
Using the assumption $\norm{\varphi_k}_a\leq \mathfrak{b}$, estimates \eqref{for:3} and \eqref{for:9}, and the inequality $\|\pi_{\tilde E^s}(n)\|\ge \frac 12 \|n\|$ we obtain
\begin{align*}
 \|S(n)_ -\|&\,\leq\,\sum_{k\leq 0}\,\norm{P_k}\max_{1\leq j\leq m}|(\widehat{\varphi_{k,j}})_{\tilde{A}^kn}|
 \,\leq\, \sum_{k\leq 0}\,\norm{\varphi_k}_a\,\norm{P_k}\,\norm{\tilde{A}^kn}^{-a}\\
 &\leq\, \mathfrak{b}\sum_{k\leq 0} \,p(|k|)\,\norm{\pi_{\tilde E^s}(\tilde{A}^kn)}^{-a}\,\leq\,
  \mathfrak{b}C^{-a}\sum_{k\leq 0}\, p(|k|)\,\rho^{ka}\,\norm{\pi_{\tilde E^s}(n)}^{-a} \\
& \leq \,K_{a}\mathfrak{b}\,\norm{n}^{-a}.
\end{align*}
The sum $S(n)_+$ can be estimated similarly using the inequality
$\|\pi_{\tilde E^u}(\tilde{A}n)\|\ge \frac 12 \|\tilde{A}n\|$.

\vskip.2cm

{\bf (ii)} Let  $n\in M$. From the equation \eqref{for:13} we obtain that for any $k\in \Z$
$$
P_k \,  \varphi_k \circ A^{k}=P_k A_i \,  \mathfrak{h}_k \circ A^{k}-a_k P_k \, \mathfrak{h}_{k+1}\circ A^{k+1}
$$
Summing from $-m$ to $j$ and observing that the sum on the right is telescoping as $a_k P_k = A_i P_{k+1}=P_{k+1} A_i$ by the choice of $P_k$ in \eqref{P_k},  we obtain
\begin{align*}
\sum_{k=-m}^j P_k\; \varphi_k\circ A^{k}=  A_iP_{-m}\mathfrak{h}_{-m}\circ A^{-m}-a_jP_j \, \mathfrak{h}_{j+1}\circ A^{j+1}.
\end{align*}
Taking Fourier coefficients and noting that $(\widehat{\theta\circ A^k})_n=\widehat{\theta}_{\tilde{A}^k n}$
we obtain
\begin{align*}
\sum_{k=-m}^j P_k(\widehat{\varphi_k})_{\tilde{A}^{k}n}= A_iP_{-m}(\widehat{\mathfrak{h}_{-m}})_{\tilde{A}^{-m}n}-a_j P_j(\widehat{\mathfrak{h}_{j+1}})_{\tilde{A}^{j+1}n}.
\end{align*}
Since the series
$\sum_{k\in \Z} P_k(\widehat{\mathfrak{h}_k})_{\tilde{A}^{k}n}$ converges by part (i), we have $P_k(\widehat{\mathfrak{h}_k})_{\tilde{A}^{k}n}\to 0$ as $k\to \pm \infty$ and hence,
as $a_k$ are bounded,
 \begin{align*}
   &a_j P_j (\widehat{\mathfrak{h}_{j+1}})_{\tilde{A}^{j+1}n}\to 0,\qquad \text{as }j\to\infty;\quad\text{and}\\
   &A_iP_m(\widehat{\mathfrak{h}_{m}})_{\tilde{A}^{m}n}\to 0,\qquad \text{as }m\to-\infty.
  \end{align*}
 We conclude that $S(n)=0$.
\end{proof}


\subsection{Directional derivatives}
 In this section we establish some estimates for Fourier coefficients  of a
$C^1$ function  $\theta :\T^N \to \R$ via Fourier coefficients  of its directional derivatives
along a subspace $E^i$ of the splitting \eqref{splitL}. This relies on weak irreducibility of $A$.

For any $v\in\R^N$ with $\norm{v}=1$, we denote the directional derivative of $\theta$ along $v$
by $\theta_v$.

\begin{lemma}\label{le:4} Let $A$ be a weakly irreducible integer matrix and let $v_{i,j}$, $j=1,\dots ,N_i$, be an orthonormal basis of a subspace $E^i$ from  \eqref{splitL}.  Then there exists a constant $K=K(A)$ such that for any $i=1, \dots, L$ and any $C^1$ function $\theta: \TT^N \to \R$,
\begin{align*}
 |\hat{\theta}_n|\,\leq\, K\sum_{j=1}^{N_i}|(\widehat{\theta_{v_{i,j}}})_n|\cdot \norm{n}^N\quad \text{for all }\,n\in\Z^N\backslash 0.
\end{align*}
\end{lemma}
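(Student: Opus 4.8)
The plan is to pass to Fourier coefficients and reduce the statement to a Diophantine estimate on the size of the orthogonal projection of an integer vector onto $E^i$; that estimate is the real content, and is the place where weak irreducibility is used. First, integrating by parts on $\TT^N$ gives, for any unit vector $v$ and any $C^1$ function $\theta$, the identity $(\widehat{\theta_{v}})_n = 2\pi\mathbf{i}\,(n\cdot v)\,\widehat{\theta}_n$. Hence, writing $P_i$ for the orthogonal projection of $\R^N$ onto $E^i$ and using that $\{v_{i,j}\}_{j=1}^{N_i}$ is an orthonormal basis of $E^i$,
\[
\sum_{j=1}^{N_i}\bigl|(\widehat{\theta_{v_{i,j}}})_n\bigr|
= 2\pi\,|\widehat{\theta}_n|\sum_{j=1}^{N_i}|n\cdot v_{i,j}|
\ \ge\ 2\pi\,|\widehat{\theta}_n|\Bigl(\sum_{j=1}^{N_i}|n\cdot v_{i,j}|^2\Bigr)^{1/2}
= 2\pi\,|\widehat{\theta}_n|\,\|P_i n\|
\]
(if the basis is orthonormal only for some adapted inner product, the left-hand side is still bounded below by a fixed multiple of $\|P_i n\|$, by comparing Gram matrices). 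Thus the lemma follows once we show $\|P_i n\|\ge c(A)\,\|n\|^{-(N-1)}$ for every $n\in\Z^N\setminus\{0\}$, since then $|\widehat{\theta}_n|\le (2\pi c(A))^{-1}\|n\|^{N-1}\sum_{j}|(\widehat{\theta_{v_{i,j}}})_n|\le (2\pi c(A))^{-1}\|n\|^{N}\sum_{j}|(\widehat{\theta_{v_{i,j}}})_n|$, using $\|n\|\ge1$.

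To prove this estimate I would first observe that $(E^i)^\perp$ contains no nonzero integer vector. Indeed $(E^i)^\perp$ is $A^\tau$-invariant, because $\langle A^\tau u,w\rangle=\langle u,Aw\rangle$; moreover the bilinear pairing $\langle u,w\rangle$ vanishes whenever $u$ and $w$ are generalized eigenvectors of $A$ and of $A^\tau$ for \emph{different} eigenvalues, so $(E^i)^\perp$ is exactly the sum of the generalized eigenspaces of $A^\tau$ for eigenvalues of modulus $\ne\rho_i$. Since $A^\tau$ has the same characteristic polynomial as $A$, it is weakly irreducible by Lemma \ref{weak irred}, whence $(E^i)^\perp\cap\Z^N=\{0\}$. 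For the quantitative bound I would \emph{not} use the projector $P_i$ (whose entries lie in the splitting field of $A$, forcing a worse exponent), but a single generalized eigenvector. The generalized eigenspaces of $A$ for the eigenvalues $\alpha$ with $|\alpha|=\rho_i$ span $E^i\otimes\C$, and each of them has the form $\ker(A-\alpha I)^{k}$, hence has a basis in $\Q(\alpha)^N$; fix a finite set $\mathcal{V}$ of such vectors spanning $E^i\otimes\C$. Given $0\ne n\in\Z^N$: if $\langle n,v\rangle=0$ for all $v\in\mathcal{V}$ then $n\perp E^i$, i.e.\ $n\in(E^i)^\perp$, contradicting the previous sentence; so $\langle n,v\rangle\ne0$ for some $v\in\mathcal{V}$, with corresponding eigenvalue $\alpha$ of degree $\delta=[\Q(\alpha):\Q]\le N$.

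Clearing denominators gives $v'=D_v v\in\Z[\alpha]^N\subseteq\mathcal{O}_{\Q(\alpha)}^{\,N}$ (using that $\alpha$, an eigenvalue of an integer matrix, is an algebraic integer), so $\langle n,v'\rangle$ is a nonzero algebraic integer in $\Q(\alpha)$ and $|N_{\Q(\alpha)/\Q}(\langle n,v'\rangle)|\ge1$. Each Galois conjugate satisfies $|\sigma(\langle n,v'\rangle)|=|\langle n,\sigma(v')\rangle|\le C_v\|n\|$ with $C_v$ depending only on $A$, so $1\le|\langle n,v'\rangle|\,(C_v\|n\|)^{\delta-1}$ and therefore $|\langle n,v'\rangle|\ge c_v\|n\|^{-(\delta-1)}\ge c_v\|n\|^{-(N-1)}$, whence $|\langle n,v\rangle|\ge c_v'\|n\|^{-(N-1)}$. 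Since $\langle n,v\rangle=\langle P_i n,v\rangle$ (because $v\in E^i\otimes\C$ is orthogonal to $(E^i)^\perp$) and $|\langle P_i n,v\rangle|\le\|P_i n\|\,\|v\|$ by Cauchy--Schwarz, we conclude $\|P_i n\|\ge|\langle n,v\rangle|/\|v\|\ge c(A)\|n\|^{-(N-1)}$, with $c(A)=\min_{v\in\mathcal{V}}(\dots)>0$, as required.

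The reduction in the first step and the transfer of weak irreducibility to $A^\tau$ are routine. The main obstacle is the Diophantine estimate, and within it the key point is to extract the lower bound from a single eigenvector lying in a number field of degree $\le N$ rather than from the orthogonal projector or a polynomial in $A$ (which would land in the splitting field and yield an exponent governed by its degree, typically far larger than $N$), together with the bookkeeping that makes all the constants $D_v,C_v,\|v\|$ uniform in $n$ over the finite set $\mathcal{V}$.
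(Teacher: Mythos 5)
Your proof is correct, and its skeleton coincides with the paper's: the same Fourier identity $(\widehat{\theta_v})_n=2\pi\mathbf{i}(n\cdot v)\hat\theta_n$, the same reduction to a lower bound on $\|\pi_{E^i}n\|=d(n,(E^i)^\perp)$, the same identification $(E^i)^\perp=\hat E^i_\tau$ (the paper does this via the polynomial $p_i(A)^\tau=p_i(A^\tau)$, you via duality of generalized eigenspaces of $A$ and $A^\tau$ -- equivalent content), and the same transfer of weak irreducibility to $A^\tau$ through Lemma \ref{weak irred}. The one genuine divergence is the final Diophantine estimate: the paper simply invokes Katznelson's Lemma (Lemma \ref{le:1}, with proof cited to \cite{Damjanovic4}) applied to $A^\tau$ and the splitting $\hat E^i_\tau\oplus E^i_\tau$, yielding $d(n,\hat E^i_\tau)\ge K\|n\|^{-N}$, whereas you prove the bound from scratch by pairing $n$ with a generalized eigenvector $v\in\Q(\alpha)^N$, clearing denominators, and using $|N_{\Q(\alpha)/\Q}(\langle n,v'\rangle)|\ge 1$ together with the trivial bound on the Galois conjugates. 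This buys a self-contained argument and a marginally sharper exponent $\|n\|^{-(N-1)}$ (with $\delta-1\le N-1$ conjugate factors rather than $N$), at the cost of the algebraic bookkeeping (rationality of the generalized eigenspaces over $\Q(\alpha)$, integrality of $\alpha$, uniformity of the constants over the finite spanning set); the mechanism is essentially the standard proof of Katznelson's Lemma, so in substance you have inlined the cited lemma rather than changed the route.
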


\begin{proof}
 We denote by $\|.\|$ the standard Euclidean norm in $\R^N$. Since $\theta$ is $C^1$, we have
\begin{align*}
 2\pi\textrm{i}(n\cdot v_{i,j})\hat{\theta}_n=(\widehat{\theta_{v_{i,j}}})_n,\qquad 1\leq j\leq N_i.
\end{align*}
Adding over $j$ we obtain that for any  $n\in\ZZ^N\backslash 0$ we have
\begin{align*}
 |\hat{\theta}_n| \,=\,\frac{\sum_{j=1}^{N_i}|(\widehat{\theta_{v_{i,j}}})_n|}{2\pi\sum_{j=1}^{N_i}|n\cdot v_{i,j}|} \,\le\, \frac{\sum_{j=1}^{N_i}|(\widehat{\theta_{v_{i,j}}})_n|}{2\pi \norm{\pi_{{E}^i}n}},
\end{align*}
since for an orthonormal basis $v_{i,j}$ we have $\sum_{j=1}^{N_i}|n\cdot v_{i,j}| \ge  \norm{\pi_{{E}^i}n}$.
 Since $\norm{\pi_{{E}^i}n}=d(n,(\E^i)^\perp)$, to complete the proof it remains to show that
$d(n,(\E^i)^\perp)\geq K' \norm{n}^{-N}$.

Since $A$  is weakly irreducible, so is the transpose $A^\tau$. This follows from
 Lemma \ref{Weak irred} which gives an equivalent condition for weak irreducibility
 in terms of the characteristic polynomial.
We denote the splitting \eqref{splitL} for $A^\tau$ by $\R^N=E^1_\tau \oplus \dots \oplus E^L_\tau$
and similarly let $\hat E^i_\tau = \oplus_{j\ne i} E^i_\tau$. Then we obtain $(\E^i)^\perp=\hat E^i_\tau$.
Indeed, the polynomial
$$p_i(x)=\prod _{|\lambda |=\rho_i}(x-\lambda)^N,$$ where the product is over all eigenvalues of $A$ of modulus $\rho_i$, is real and
$$(\E^i)^\perp=(\ker p_i(A))^\perp = range(p_i(A)^\tau)= range(p_i(A^\tau)) =\hat E^i_\tau,$$
since 
 $p_i(A^\tau)$ is invertible on $\hat E^i_\tau$. Now the desired inequality
$$d(n,(\E^i)^\perp)=d(n,\hat E^i_\tau) \geq K' \norm{n}^{-N}$$
 follows from Katznelson's Lemma below.
We apply it to $A^\tau$ with the invariant splitting  $\R^N= \hE^i_\tau \oplus E^i_\tau$ and note that
$\hE^i_\tau \cap\Z^N=\{0\}$ by weak irreducibility of $A^\tau$.
\end{proof}

\begin{lemma}[Katznelson's Lemma]\label{le:1} Let $A$ be an $N\times N$ integer matrix.
Assume that $\R^N$ splits as $\R^N=V_1\bigoplus V_2$ with $V_1$ and $V_2$ invariant under $A$
and such that $A|_{V_1}$ and $A|_{V_2}$ have no common eigenvalues. If $V_1\cap\Z^N=\{0\}$, then
there exists a constant $K$ such that
\begin{align*}
d(n,V_1)\geq K\norm{n}^{-N} \quad\text{for all } 0\ne n\in\Z^N,
\end{align*}
where $\norm{v}$ denotes
Euclidean norm and $d$ is Euclidean distance.
\end{lemma}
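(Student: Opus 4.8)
The plan is to bound $d(n,V_1)$ from below by a single algebraic quantity attached to $n$ and then invoke the elementary height estimate that a nonzero algebraic integer has field norm of absolute value at least $1$ ("Liouville's inequality"). Since the Euclidean distance is $d(n,V_1)=\|\pi_{V_1^\perp}n\|$, it suffices to bound the orthogonal projection $\pi_{V_1^\perp}n$ from below; the key is to pair it with well-chosen vectors of $V_1^\perp$ coming from eigenvectors of $A^\tau$. A preliminary structural remark: the hypothesis that $A|_{V_1}$ and $A|_{V_2}$ have no common eigenvalue forces $V_1$ and $V_2$ to be sums of generalized eigenspaces of $A$. Writing $\C^N=\bigoplus_\alpha G_\alpha$ for the decomposition into generalized eigenspaces, $A$-invariance gives $V_1\otimes\C=\bigoplus_\alpha(V_1\otimes\C\cap G_\alpha)$ and likewise for $V_2$; if some $G_\alpha$ met both $V_1\otimes\C$ and $V_2\otimes\C$ nontrivially, then $\alpha$ would be an eigenvalue of both $A|_{V_1}$ and $A|_{V_2}$. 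Hence $V_1\otimes\C=\bigoplus_{\alpha\in S_1}G_\alpha$ for a subset $S_1$ of the eigenvalues, with $V_2\otimes\C=\bigoplus_{\alpha\notin S_1}G_\alpha$. Using the biorthogonality $\langle G_\alpha(A),G_\beta(A^\tau)\rangle=0$ for $\alpha\ne\beta$ relative to the bilinear form $\langle x,y\rangle=\sum_j x_jy_j$, which agrees with the Euclidean inner product on $\R^N$, together with $\dim G_\beta(A^\tau)=\dim G_\beta(A)$, a dimension count yields $V_1^\perp\otimes\C=\bigoplus_{\beta\notin S_1}G_\beta(A^\tau)$.

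Next I would produce a convenient spanning set of $V_1^\perp$. For each $\beta\notin S_1$ the generalized eigenspace $G_\beta(A^\tau)=\ker(A^\tau-\beta I)^N$ of the integer matrix $A^\tau$ is cut out by linear equations with coefficients in $\Q(\beta)$, hence has a basis with entries in $\Q(\beta)$; multiplying by a common denominator I may take the entries to be algebraic integers of $\Q(\beta)$, a field of degree at most $N$. Collecting these bases over all $\beta\notin S_1$ gives vectors $\xi_1,\dots,\xi_r$ that span $V_1^\perp\otimes\C$, where each $\xi_i$ has algebraic-integer coordinates generating a number field $K_i$ with $e_i:=[K_i:\Q]\le N$, and where all archimedean conjugates of all coordinates of all the $\xi_i$ are bounded in absolute value by a constant $C_0=C_0(A)$ (there are only finitely many such numbers).

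Now for the main estimate, fix $0\ne n\in\Z^N$. Since $V_1\cap\Z^N=\{0\}$ we have $n\notin V_1$, so $w:=\pi_{V_1^\perp}n\ne0$; as $w$ is a nonzero real vector lying in $V_1^\perp$ and the $\xi_i$ span $V_1^\perp\otimes\C$, some pairing $\theta:=\langle n,\xi_i\rangle=\langle w,\xi_i\rangle$ is nonzero, the two expressions agreeing because $\pi_{V_1}n\in V_1$ is bilinearly orthogonal to $\xi_i\in V_1^\perp\otimes\C$. Then $\theta=\sum_j n_j(\xi_i)_j$ is a nonzero algebraic integer of $K_i$, so the absolute value of its field norm $\mathrm{N}_{K_i/\Q}(\theta)$ is at least $1$; since each conjugate satisfies $|\theta^\sigma|=|\sum_j n_j(\xi_i)_j^\sigma|\le NC_0\|n\|$ over the $e_i$ embeddings $\sigma:K_i\hookrightarrow\C$, we get $|\theta|\ge(NC_0\|n\|)^{-(e_i-1)}\ge K_1\|n\|^{-(N-1)}$. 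Finally, Cauchy--Schwarz gives $|\theta|=|\langle w,\xi_i\rangle|\le\|w\|\,\|\xi_i\|$, hence
\[
d(n,V_1)=\|w\|\ \ge\ \frac{|\theta|}{\|\xi_i\|}\ \ge\ K\,\|n\|^{-(N-1)}\ \ge\ K\,\|n\|^{-N},
\]
with $K$ depending only on $A$ through $C_0$, the degrees $e_i$, and $\max_i\|\xi_i\|$.

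The main obstacle is controlling the degree of the auxiliary algebraic number. Projecting $n$ onto $V_1^\perp$ by the (complex) spectral projection of $A$ and using its matrix entries would introduce the full splitting field of the characteristic polynomial, of degree up to $N!$, which would wreck the exponent; the reason we can still get the clean power $\|n\|^{-N}$ is that we route everything through $A^\tau$ and pair with a \emph{single} generalized eigenvector $\xi_i$, whose entries lie in the one field $\Q(\beta_i)$ of degree at most $N$. The remaining points — treating complex $\beta$, the bilinear versus Hermitian pairing, and clearing denominators to land among algebraic integers — are routine.
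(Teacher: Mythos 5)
Your argument is correct, but note that the paper does not prove this lemma at all: it simply cites Damjanovi\'c--Katok (Lemma 4.1 of \cite{Damjanovic4}), so what you have produced is a self-contained alternative to the classical argument in the cited reference. The structural steps all check out: the no-common-eigenvalue hypothesis does force $V_1\otimes\C$ and $V_2\otimes\C$ to be sums of generalized eigenspaces; biorthogonality of $G_\alpha(A)$ and $G_\beta(A^\tau)$ for $\alpha\neq\beta$ under the bilinear form plus a dimension count does identify $V_1^\perp\otimes\C$ with $\bigoplus_{\beta\notin S_1}G_\beta(A^\tau)$; each such generalized eigenspace is defined over $\Q(\beta)$ with $[\Q(\beta):\Q]\le N$, so after clearing denominators you get a spanning set with algebraic-integer entries in a single field of degree at most $N$; and the Liouville step (field norm of a nonzero algebraic integer is at least $1$, all conjugates of $\langle n,\xi_i\rangle$ bounded by $C\norm{n}$) is applied correctly. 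The one compressed step, the nonvanishing of some pairing $\langle w,\xi_i\rangle$, is fine because the bilinear form is the Euclidean form on the real points of $V_1^\perp$, hence positive definite there, so $w\neq 0$ cannot annihilate a spanning set of $V_1^\perp\otimes\C$ (it would have to annihilate itself). Your insistence on pairing with a single generalized eigenvector of $A^\tau$, rather than using spectral projections whose entries live in the full splitting field of degree up to $N!$, is exactly what keeps the degree, and hence the exponent, under control; as a bonus you obtain the stronger bound $d(n,V_1)\ge K\norm{n}^{-(N-1)}$, which you then relax to $\norm{n}^{-N}$ as stated (legitimate since $\norm{n}\ge 1$). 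In short: a complete proof, by a route the paper leaves to the literature, and slightly sharper than the statement requires.
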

See e.g. \cite[Lemma 4.1] {Damjanovic4} for a proof.


\subsection{Proof of Theorem \ref{th:3} } \label{proof th:3}
Using the splitting $\R^N=\oplus E^i$ we decompose  \eqref{for:11} into equations
\begin{align}\label{for:12}
A_i \mathfrak{h}_i-\mathfrak{h}_i\circ A=\mathcal{R}_i+\Psi_i,\qquad  i=1, \dots, L
\end{align}
where $\mathfrak{h}_i$, $\mathcal{R}_i$ and $\Psi_i$ are coordinate maps in the
of $\mathfrak{h}$, $\mathcal{R}$ and $\Psi$ respectively.

By Lemma \ref{le:2} there is an excellent $C^\infty$ map $\mathcal{R}_i^*$ with estimates
      \begin{align}\label{for:23}
       \norm{\mathcal{R}_i^*}_{C^r}\leq K_r\,\norm{\mathcal{R}_i}_{C^{r+\sigma}},\quad \norm{\mathcal{R}_i^*}_{r}\leq K_{r}\,\norm{\mathcal{R}_i}_{r+\sigma-N-2}
      \end{align}
      for any $r\geq0$, such that the equation:
      \begin{align}\label{for:18}
 A_i\omega_i-\omega_i\circ A=\mathcal{R}_i+\mathcal{R}_i^*
\end{align}
has a $C^\infty$ solution $\omega_i$ with estimates
\begin{align*}
       \norm{\omega_i}_{C^r}\leq K_r\norm{\mathcal{R}_i}_{C^{r+\sigma}},\qquad \forall\,r\geq0.
      \end{align*}
 Let $\omega$ be the map with coordinate maps $\omega_i$.

We obtain from \eqref{for:12} and \eqref{for:18} that $C^{1+a}$ maps $\mathfrak{p}_i=\mathfrak{h}_i-\omega_i$ and $\Lambda_i=-\mathcal{R}_i^*+\Psi_i$ satisfy
\begin{align*}
A_i \mathfrak{p}_i-\mathfrak{p}_i\circ A=\Lambda_i.
\end{align*}

We fix $1 \le i \le L$ and an orthonormal basis $v_{i,j}$ of $E^i$.  We fix $1 \le j \le N_i$
and, as in \eqref{u_k}, consider unit vectors $u_0=v_{i,j}$ and $u_k=\frac{A^ku_0}{\norm{A^ku_0}}$, and let $a_k=\norm{Au_k}$, $k\in\ZZ$. Taking the derivative of the previous equation in the direction of $u_k$
we obtain equations
\begin{align*}
  A_i (\mathfrak{p}_i)_{u_k}-a_{k}(\mathfrak{p}_i)_{u_{k+1}}\circ A=(\Lambda_i)_{u_k},\qquad \forall\,k\in\ZZ.
\end{align*}
We note that for any $k\in\ZZ$ the maps $(\mathfrak{p}_i)_{u_k}$ and $(\Lambda_i)_{u_k}$ are in $C^{a}$ and hence in $\H^a$, as we recall that for any function $g$ by \eqref{for:19} we have
\begin{align}\label{for:21}
 \norm{g_{u_k}}_{a}&\leq K\norm{g_{u_k}}_{C^a}\leq K_1\norm{g}_{C^{1+a}}.
 \end{align}

Now we use (ii) of Lemma \ref{cor:2} with $\mathfrak{h}_k=(\mathfrak{p}_i)_{u_k}$,
$\varphi_k=(\Lambda_i)_{u_k}$, and $P_k$ is as defined in \eqref{for:20} to obtain that for any $n\in \mathcal{M}$
\begin{align*}
 \sum_{k\in\ZZ}P_k\widehat{((\Psi_i)_{u_k})}_{\tilde{A}^kn}-\sum_{k\in\ZZ}P_k\widehat{((\mathcal{R}_i^*)_{u_k})}_{\tilde{A}^kn}
 =\sum_{k\in\ZZ}P_k\widehat{((\Lambda_i)_{u_k})}_{\tilde{A}^kn}=0.
\end{align*}
Since $(\mathcal{R}_i^*)_{u_k}$ is excellent, for each $k\in\ZZ$ we have
\begin{align*}
 \sum_{k\in\ZZ}P_k\widehat{((\Psi_i)_{u_k})}_{\tilde{A}^kn}=\sum_{k\in\ZZ}P_k\widehat{((\mathcal{R}_i^*)_{u_k})}_{\tilde{A}^kn}=\widehat{((\mathcal{R}_i^*)_{u_0})}_{n}
\end{align*}
for any $n\in M$, which gives
\begin{align*}
 |\widehat{((\mathcal{R}_i^*)_{u_0})}_{n}|\overset{\text{(1)}}{\leq} K_{a}\max_{k\in\ZZ}\{\norm{(\Psi_i)_{u_k}}_a\}\norm{n}^{-a}\overset{\text{(2)}}{\leq} K_{a,1}\norm{\Psi_i}_{C^{1+a}}\norm{n}^{-a}.
\end{align*}
 Here in $(1)$ we use (i) of Lemma \ref{cor:2} and in $(2)$ we use \eqref{for:21}.

We conclude that for any $v_{i,j}$, $1\leq j\leq N_i$, we have
\begin{align}\label{for:22}
 |\widehat{((\mathcal{R}_i^*)_{v_{i,j}})}_{n}|\leq K_{a}\norm{\Psi_i}_{C^{1+a}}\norm{n}^{-a},\qquad \forall\,n\in M.
\end{align}

Finally, using Lemma \ref{le:4} and \eqref{for:22}, we obtain that for any $n\in M$
\begin{align}\label{for:5}
 |\widehat{(\mathcal{R}_i^*)}_n|&\leq K\sum_{j=1}^{N_i}|\widehat{((\mathcal{R}_i^*)_{v_{i,j}})}_{n}|\norm{n}^N\leq K_{a}\norm{\Psi_i}_{C^{1+a}}\norm{n}^{N-a}\notag\\
 & \leq K_{a}\norm{\Psi_i}_{C^{1+a}}\norm{n}^{N}.
\end{align}

Now for any $r>N+2$ and any $n\in M$ we can estimate
 splitting the exponent of the first term as $\a$ and $1-\a$ in the way to get the total
 the exponent of $\|n\| $ be zero
\begin{align*}
 |\widehat{(\mathcal{R}_i^*)}&_n|\norm{n}^{N+2}=|\widehat{(\mathcal{R}_i^*)}_n|^{\frac{l-2-N}{l+N}}|\widehat{(\mathcal{R}_i^*)}_n|^{\frac{2N+2}{l+N}}\norm{n}^{N+2}\\
 &\overset{\text{(1)}}{\leq} \big(K_{a}\norm{\Psi_i}_{C^{1+a}}\norm{n}^{N}\big)^{\frac{l-2-N}{l+N}}\big(\norm{n}^{-l}\norm{\mathcal{R}_i^*}_l\big)^{\frac{2N+2}{l+N}}\norm{n}^{N+2}\\
 &=K_{a}^{\frac{l-2-N}{l+N}}(\norm{\Psi_i}_{C^{1+a}})^{\frac{l-2-N}{l+N}}(\norm{\mathcal{R}_i^*}_l)^{\frac{2N+2}{l+N}}\\
 &\overset{\text{(2)}}{\leq} K_{l,a}(\norm{\Psi_i}_{C^{1+a}})^{\frac{l-2-N}{l+N}}(\norm{\mathcal{R}_i^*}_{C^l})^{\frac{2N+2}{l+N}}\\
 & \overset{\text{(3)}}{\leq} K_{l,a}(\norm{\Psi_i}_{C^{1+a}})^{\frac{l-2-N}{l+N}}(\norm{\mathcal{R}_i}_{C^{l+\sigma}})^{\frac{2N+2}{l+N}}.
 \end{align*}
Here in $(1)$ we use that $\mathcal{R}_i^*$ is $C^\infty$ and \eqref{for:5}; in $(2)$ we use \eqref{for:19}; in $(3)$ we use
\eqref{for:23}. Then by \eqref{for:19}  we get
\begin{align*}
 \norm{\mathcal{R}_i^*}_{C^0}\leq C\norm{\mathcal{R}_i^*}_{N+2}\leq K_{l,a}(\norm{\Psi_i}_{C^{1+a}})^{\frac{l-2-N}{l+N}}(\norm{\mathcal{R}_i}_{C^{l+\sigma}})^{\frac{2N+2}{l+N}}.
\end{align*}
Finally, we denote by $\Phi$ the map with coordinate maps $\mathcal{R}_i^*$.

\section{Proof of Theorem \ref{th:4} } \label{proof th:4}

 In this section we complete the proof of Theorem \ref{th:4} using an iterative process.
The main part is the inductive step given by Proposition \ref{po:1}. We start with a sufficiently small
perturbation $f_n$ of $A$ which is $C^{1}$ conjugate to $A$.  We construct a smaller perturbation
$f_{n+1}$ which is smoothly conjugate to $f_n$. The conjugacy $\tilde{H}_{n+1}$ between $f_{n}$ and $f_{n+1}$ is obtained using Theorem \ref{th:3}. Then the iterative process is set up so that
$f_n$ converges to $A$ and $\tilde{H}_{1}\circ \cdots\circ \tilde{H}_{n+1}$ converge
in sufficiently high regularity.

\subsection{Iterative step and error estimate} $\;$

We recall the following results, which will be used the proof of Proposition \ref{po:1}.
\begin{lemma}  \cite[Propositions 5.5]{dlLO} \label{dlLO lemma}
For any $r\ge 1$ there exists a constant  $M_r$ such that for any $h,g \in C^r(\M)$,
$$
  \| h\circ g\|_{C^r} \le M_r \left(1+ \| g\|_{C^1}^{r-1} \right)
  \left( \|h\|_{C^1}\| g\|_{C^r} + \|h\|_{C^r}\| g\|_{C^1} \right) +  \|h\|_{C^0} .
  $$
\end{lemma}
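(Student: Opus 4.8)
The plan is to treat Lemma~\ref{dlLO lemma} as a standard tame composition estimate, obtained by combining Fa\`a di Bruno's formula with convexity interpolation inequalities for $C^j$ norms; ultimately one may simply cite \cite{dlLO}, but here is the argument I would carry out. First I would dispose of the trivial summand: $\norm{h\circ g}_{C^0}\le\norm{h}_{C^0}$ accounts for the term $\norm{h}_{C^0}$, so it remains to estimate $\norm{D^k(h\circ g)}_{C^0}$ for $1\le k\le r$, working in a fixed finite atlas of $\M$ so that all constants depend only on $r$ and $\M$.

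Next I would expand $D^k(h\circ g)$ by Fa\`a di Bruno's formula into a finite combinatorial sum of terms $\big((D^m h)\circ g\big)\,D^{k_1}g\cdots D^{k_m}g$ with each $k_i\ge1$ and $\sum_i k_i=k$, each bounded in $C^0$ by $\norm{h}_{C^m}\prod_{i=1}^m\norm{g}_{C^{k_i}}$. The two extreme terms already have the required shape: $m=1$ gives $\norm{h}_{C^1}\norm{g}_{C^k}\le\norm{h}_{C^1}\norm{g}_{C^r}$, and $m=k$ gives $\norm{h}_{C^k}\norm{g}_{C^1}^{\,k}\le\norm{h}_{C^r}\norm{g}_{C^1}\,(1+\norm{g}_{C^1}^{\,r-1})$, using $0\le k-1\le r-1$. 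For a generic term with $1<m<k$ I would apply the interpolation bounds $\norm{g}_{C^j}\le K_r\norm{g}_{C^1}^{(r-j)/(r-1)}\norm{g}_{C^r}^{(j-1)/(r-1)}$ (and its analogue for $h$), valid for $1\le j\le r$ on a compact manifold up to a constant, to each factor; with the identity $\sum_i(k_i-1)=k-m$ this bounds the term by $K_r\,\norm{h}_{C^1}^{1-\nu}\norm{h}_{C^r}^{\nu}\norm{g}_{C^1}^{\,m-\tau}\norm{g}_{C^r}^{\,\tau}$, where $\nu=(m-1)/(r-1)$, $\tau=(k-m)/(r-1)$, $\nu+\tau=(k-1)/(r-1)\le1$, so the $h$-degree is exactly $1$ and the $g$-degree is $m\le r$. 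Using $\norm{g}_{C^1}\le\norm{g}_{C^r}$ to rewrite the fractional $g$-powers, pairing the result against the two products $\norm{h}_{C^1}\norm{g}_{C^r}$ and $\norm{h}_{C^r}\norm{g}_{C^1}$ by the weighted arithmetic--geometric mean inequality, and collecting the leftover factor $\norm{g}_{C^1}^{\,m-1}$ into $1+\norm{g}_{C^1}^{\,r-1}$, one dominates the term by $K_r(1+\norm{g}_{C^1}^{\,r-1})(\norm{h}_{C^1}\norm{g}_{C^r}+\norm{h}_{C^r}\norm{g}_{C^1})$. Summing the finitely many Fa\`a di Bruno terms then yields the claimed estimate.

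The main obstacle is the exponent bookkeeping rather than any single inequality: one has to verify that the interpolation weights $\nu,\tau$ stay in the admissible ranges, that the pairing against the two target products is legitimate precisely because the generic term has $h$-degree $1$ and $g$-degree at most $r$, that $\norm{g}_{C^1}\le\norm{g}_{C^r}$ is invoked in the right direction, and that the residual power of $\norm{g}_{C^1}$ never exceeds $r-1$ once one factor $\norm{g}_{C^r}$ has been extracted; one must also remember that the $C^1$ and $C^r$ norms need not be $\ge1$, so a power cannot be dominated by a higher power except through the $1+(\cdot)$ buffer. An alternative I would keep in reserve is induction on $r$: differentiate once, $D(h\circ g)=(Dh\circ g)\cdot Dg$, apply the Leibniz-type tame product estimate in $C^{r-1}$, and invoke the inductive hypothesis for $Dh$ in place of $h$ (controlling $\norm{Dh}_{C^{r-1}}\le\norm{h}_{C^r}$ and handling $\norm{Dh}_{C^1}\le\norm{h}_{C^2}$ by one interpolation step); this trades Fa\`a di Bruno for an extra product estimate but follows the same pattern. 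In either case the resulting bound is exactly the one recorded in \cite{dlLO}.
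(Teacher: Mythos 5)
The paper offers no proof of this lemma at all: it is imported verbatim from de la Llave--Obaya \cite[Proposition 5.5]{dlLO}, so there is no internal argument to compare yours against, and supplying one is exactly what a blind proof should do. Your argument is correct and is the standard route (and in the spirit of the cited source): Fa\`a di Bruno plus the convexity inequalities $\norm{g}_{C^j}\le K_r\norm{g}_{C^1}^{(r-j)/(r-1)}\norm{g}_{C^r}^{(j-1)/(r-1)}$, $1\le j\le r$. The exponent bookkeeping you flagged as the main risk does close: writing $a=\norm{h}_{C^1}$, $A=\norm{h}_{C^r}$, $b=\norm{g}_{C^1}$, $B=\norm{g}_{C^r}$, a Fa\`a di Bruno term of inner order $m$ with multi-order $(k_1,\dots,k_m)$, $\sum_i k_i=k\le r$, is bounded by $K\,a^{1-\nu}A^{\nu}\,b^{m-\tau}B^{\tau}$ with $\nu=(m-1)/(r-1)$, $\tau=(k-m)/(r-1)$; since $\nu+\tau=(k-1)/(r-1)\le 1$ and $b\le B$, one has $b^{m-\tau}B^{\tau}\le b^{\nu}B^{1-\nu}\,b^{m-1}$ (this is precisely where $\norm{g}_{C^1}\le\norm{g}_{C^r}$ enters, via $(B/b)^{\nu+\tau-1}\le 1$), so the term is at most $b^{m-1}(aB)^{1-\nu}(Ab)^{\nu}\le\bigl(1+b^{r-1}\bigr)\bigl(aB+Ab\bigr)$ by weighted AM--GM together with $m-1\le r-1$, exactly as you describe. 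The degenerate cases are harmless: for $r=1$ the estimate is just the chain rule (no interpolation is invoked), and if $\norm{g}_{C^1}=0$ then $g$ is constant and $\norm{h\circ g}_{C^r}\le\norm{h}_{C^0}$. With the $C^0$ term absorbed by $\norm{h}_{C^0}$, your sketch amounts to a complete, self-contained proof of the quoted estimate; the inductive alternative you keep in reserve would work equally well.
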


\begin{lemma}\label{le:5} \cite[Lemma AII.26.]{La} There is $d>0$ and such that for any $h\in C^r(\M)$, if $\norm{h-I}_{C^1}\leq d$ then $h^{-1}$ exists
with the estimate $\norm{h^{-1}-I}_{C^r}\leq K_r\norm{h-I}_{C^r}$.

\end{lemma}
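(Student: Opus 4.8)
The plan is to deduce the lemma from two standard facts: that a $C^1$--small perturbation of the identity is a $C^r$ diffeomorphism, and that its inverse obeys tame estimates. This is exactly the content of \cite[Lemma AII.26]{La}; here is the argument behind it. First I would settle existence. Write $h=I+\phi$ with $\|\phi\|_{C^1}\le d$; in our setting $\M=\T^N$, so lift $h$ to $\bar h=I+\bar\phi$ on $\R^N$ with $\bar\phi$ periodic and $\|D\bar\phi\|_{C^0}\le d<1$. Then for each $y\in\R^N$ the map $x\mapsto y-\bar\phi(x)$ is a $d$--contraction, hence has a unique fixed point $\bar g(y)$, giving $\bar h\circ\bar g=I$; and since $Dh=I+D\phi$ is everywhere invertible (as $\|D\phi\|_{C^0}<1$), the inverse function theorem makes $g=h^{-1}$ a $C^r$ diffeomorphism. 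The low--order estimates are immediate: from $g-I=-\phi\circ g$ we get $\|g-I\|_{C^0}\le\|\phi\|_{C^0}$, and from $Dh(g)\,Dg=I$, i.e. $Dg=(I+D\phi\circ g)^{-1}$, we get $\|Dg\|_{C^0}\le(1-d)^{-1}$ and $\|Dg-I\|_{C^0}\le(1-d)^{-1}\|\phi\|_{C^1}$.

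For $r\ge2$ I would induct on $r$. Differentiating the identity $(I+D\phi\circ g)\,Dg=I$ exactly $r-1$ times (Leibniz, together with the Fa\`a di Bruno expansion of $D^k(D\phi\circ g)$) and solving for the top--order term gives
\[
D^r g=-(I+D\phi\circ g)^{-1}\sum(\text{terms}),
\]
where each term is a product of bounded factors (copies of $(I+D\phi\circ g)^{-1}$, of norm $\le2$, and of $Dg$, of norm $\le2$) together with factors of the form $D^{j}\phi\circ g$ with $1\le j\le r$ and $D^{m}g$ with $2\le m\le r-1$; the structural point is that the orders of these last two kinds of factors satisfy the homogeneity relation $\sum_a(j_a-1)+\sum_b(m_b-1)=r-1$.

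To estimate one such term in $C^0$, I would bound $\|D^{m}g\|_{C^0}\le K_m\|h-I\|_{C^m}$ by the inductive hypothesis, $\|D^{j}\phi\|_{C^0}\le\|h-I\|_{C^j}$ trivially, and then apply the convexity (interpolation) inequality $\|\psi\|_{C^k}\le K_r\|\psi\|_{C^1}^{(r-k)/(r-1)}\|\psi\|_{C^r}^{(k-1)/(r-1)}$, valid for $1\le k\le r$, to each such factor and multiply. Because of the homogeneity relation the exponents of $\|h-I\|_{C^r}$ sum to $\le1$ while those of $\|h-I\|_{C^1}$ sum to $\ge0$; combining this with $\|h-I\|_{C^1}\le d\le1$ and the trivial bound $\|h-I\|_{C^1}\le\|h-I\|_{C^r}$, each term is $\le K_r\|h-I\|_{C^r}$. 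Summing the finitely many terms and including the outer factor $(I+D\phi\circ g)^{-1}$ yields $\|D^r g\|_{C^0}\le K_r\|h-I\|_{C^r}$, which together with the lower--order estimates gives $\|h^{-1}-I\|_{C^r}\le K_r\|h-I\|_{C^r}$ and closes the induction.

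The only real work is the bookkeeping in this induction: pinning down the combinatorial form of the terms in $D^r(h^{-1})$, verifying the homogeneity relation among the orders of the $\phi$-- and $g$--factors, and checking that the interpolation exponents then conspire to make the bound \emph{linear} in the top norm $\|h-I\|_{C^r}$ with a constant depending only on $r$, $N$ and $d$ --- the $C^1$--smallness is precisely what supplies the needed slack. Since the statement is standard I would in the end simply invoke \cite{La}, but the above is the argument it rests on.
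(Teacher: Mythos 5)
The paper does not actually prove this lemma --- it is quoted verbatim from \cite[Lemma AII.26]{La} --- and your reconstruction of the standard argument behind that citation is correct: the contraction/inverse-function-theorem step for existence, the identity $Dg=(I+D\phi\circ g)^{-1}$ for the low-order bounds, and the induction via Fa\`a di Bruno with the weight count $\sum_a(j_a-1)+\sum_b(m_b-1)=r-1$, which after interpolating each factor between $C^1$ and $C^r$ (and using $\|h-I\|_{C^1}\le d\le 1$) gives a bound linear in $\|h-I\|_{C^r}$. So your proposal is consistent with the paper's treatment, and simply invoking \cite{La}, as the paper does, suffices.
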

\begin{proposition}\label{po:1}
Let $A$ be
a weakly irreducible Anosov automorphism of $\T^N$. Let $\beta=\frac{\beta_0}{2}$, where $\beta_0$ is
as in Theorem \ref{HolderConjugacy}.  There exists $0<c<\frac{1}{2}$ such that
for any $C^{\infty}$ perturbation $f_n$ of $A$ satisfying
$$\norm{f_n-A}_{C^{\sigma+2}}<c, \text{  where $\sigma$ is from  Lemma \ref{le:2}},$$
and the conjugacy equation
\begin{align}\label{for:31}
  H_n\circ f_n=A\circ H_n \;\text{ with a function $H_n\in C^1(\TT^N)$ with $\|{H}_{n}-I\|_{C^0}\leq c$}
\end{align}
the following holds.
There exists $\omega_{n+1}\in C^\infty(\TT^N)$ so that the functions
\begin{equation}\label{functions}
\tilde{H}_{n+1}=I-\omega_{n+1}, \quad
 H_{n+1}=H_n\circ \tilde{H}_{n+1},  \quad  f_{n+1}=\tilde{H}_{n+1}^{-1}\circ f_n\circ \tilde{H}_{n+1}\\
\end{equation}
satisfy the new conjugacy equation
\begin{align*}
  H_{n+1}\circ f_{n+1}=A\circ H_{n+1},
\end{align*}
and we have the following estimates.
\begin{itemize}
  \item[\bf{(i)}] For any $r\geq0$ and any $t>1$
  \begin{align*}
 \norm{\omega_{n+1}}_{C^r}\leq K_r\min\{t^\sigma\norm{R_n}_{C^{r}}, \norm{R_n}_{C^{r+\sigma}}\}.
  \quad\text{where}\quad R_n=f_n-A.
\end{align*}

\medskip
 \item[\bf{(ii)}] \label{for:77} For the new error $R_{n+1}=f_{n+1}-A $, we have
  \begin{align*}
    \norm{R_{n+1}}&_{C^0}\leq Kt^\sigma\norm{R_n}_{C^1}\norm{R_n}_{C^{0}}+K_\ell t^{-\ell}\norm{R_n}_{C^\ell}\\
 &+K_{l,\ell}(t^{-\ell+2}\norm{R_n}_{C^\ell}+\norm{R_n}^{1+\frac{\beta}{2}}_{C^2})^{\frac{l-2-N}{l+N}}(t^\sigma\norm{R_n}_{C^{l}})^{\frac{2N+2}{l+N}}
  \end{align*}
 for any $t>1$, $\ell\geq0$ and $l>N+2$; and also for any $r\geq0$ we have
  \begin{align}\label{for:10}
  \norm{R_{n+1}}_{C^r}\leq K_rt^\sigma\norm{R_n}_{C^{r}}+K_r.
\end{align}

\item[\bf{(iii)}] 
For the new conjugacy $H_{n+1}$, we have
  \begin{align}\label{for:24}
    \norm{H_{n+1}-I}&_{C^0}\leq K\norm{R_n}_{C^\sigma}+\norm{H_{n}-I}_{C^0}
  \end{align}

\end{itemize}
\end{proposition}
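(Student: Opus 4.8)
The plan is to extract from the conjugacy equation \eqref{for:31} a linearized (twisted cohomological) equation for $\mathfrak h_n:=H_n-I$, to solve it approximately by Theorem \ref{th:3}, and to take $\omega_{n+1}$ to be that approximate solution applied to a smoothed error. Since $A$ is linear, \eqref{for:31} rearranges, with $R_n:=f_n-A$, to
\[
A\mathfrak h_n-\mathfrak h_n\circ A=R_n+\Psi_n,\qquad \Psi_n:=\mathfrak h_n\circ f_n-\mathfrak h_n\circ A .
\]
Because $\|f_n-A\|_{C^1}<c$ lies below the threshold in Theorem \ref{HolderConjugacy} and $H_n\in C^1\subset W^{1,q}$, that theorem upgrades $H_n$ to a $C^{1+\beta}$ diffeomorphism ($\beta=\beta_0/2$) with $\|\mathfrak h_n\|_{C^{1+\beta}}\le K\|R_n\|_{C^{1+\beta}}\le K\|R_n\|_{C^2}$. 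A composition/interpolation estimate for the difference $\mathfrak h_n\circ f_n-\mathfrak h_n\circ A$ — using that $D\mathfrak h_n$ is $\beta$-H\"older, that $f_n-A$ is small, and the crude bound $\|R_n\|_{C^0}\le\|R_n\|_{C^2}$ — then gives $\Psi_n\in C^{1+a}$ with $a=\beta/2$ and $\|\Psi_n\|_{C^{1+a}}\le K\|R_n\|_{C^2}^{1+\beta/2}$.

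Next, I would fix $t>1$ and a standard family of Fourier-truncation smoothing operators $S_t$, for which $\|S_tg\|_{C^{r+\sigma}}\le Kt^\sigma\|g\|_{C^r}\le\|g\|_{C^{r+\sigma}}$ and $\|g-S_tg\|_{C^r}\le K_m t^{-m}\|g\|_{C^{r+m}}$. Applying Theorem \ref{th:3} to $A\mathfrak h_n-\mathfrak h_n\circ A=S_tR_n+\bigl(\Psi_n+(R_n-S_tR_n)\bigr)$ produces $C^\infty$ maps $\omega_{n+1},\Phi$ with $S_tR_n=A\omega_{n+1}-\omega_{n+1}\circ A+\Phi$, with $\|\omega_{n+1}\|_{C^r}\le K_r\|S_tR_n\|_{C^{r+\sigma}}$, and with $\|\Phi\|_{C^0}$ bounded by the second estimate of Theorem \ref{th:3}. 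Then $\tilde H_{n+1},H_{n+1},f_{n+1}$ are defined by \eqref{functions}; for $c$ small enough $\|\omega_{n+1}\|_{C^1}\le K\|R_n\|_{C^{1+\sigma}}$ is small, so Lemma \ref{le:5} gives $\tilde H_{n+1}^{-1}\in C^\infty$ and hence $f_{n+1}\in C^\infty$, and the new conjugacy equation is automatic: $H_{n+1}\circ f_{n+1}=H_n\circ\tilde H_{n+1}\circ f_{n+1}=H_n\circ f_n\circ\tilde H_{n+1}=A\circ H_n\circ\tilde H_{n+1}=A\circ H_{n+1}$.

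The three estimates then follow. Part (i) is the bound $\|\omega_{n+1}\|_{C^r}\le K_r\|S_tR_n\|_{C^{r+\sigma}}$ combined with the two smoothing inequalities for $\|S_tR_n\|_{C^{r+\sigma}}$. For (iii), write $H_{n+1}-I=(H_n\circ\tilde H_{n+1}-H_n)+\mathfrak h_n$ and use $\|H_n\circ\tilde H_{n+1}-H_n\|_{C^0}\le(1+\|\mathfrak h_n\|_{C^1})\|\omega_{n+1}\|_{C^0}\le K\|R_n\|_{C^\sigma}$, by the second bound of (i) at $r=0$. For (ii), feed $\tilde H_{n+1}=I-\omega_{n+1}$ and $f_{n+1}=A+R_{n+1}$ into the relation $\tilde H_{n+1}\circ f_{n+1}=f_n\circ\tilde H_{n+1}$; since $A$ is linear this gives
\[
R_{n+1}=\bigl(A\omega_{n+1}-\omega_{n+1}\circ A+R_n\bigr)+\bigl(\omega_{n+1}\circ(A+R_{n+1})-\omega_{n+1}\circ A\bigr)+\bigl(R_n\circ\tilde H_{n+1}-R_n\bigr).
\]
Substituting $A\omega_{n+1}-\omega_{n+1}\circ A+R_n=\Phi+(R_n-S_tR_n)$, absorbing the middle term into the left side (legitimate since $\|D\omega_{n+1}\|_{C^0}<\tfrac12$), and bounding $\|R_n-S_tR_n\|_{C^0}\le K_\ell t^{-\ell}\|R_n\|_{C^\ell}$, $\|R_n\circ\tilde H_{n+1}-R_n\|_{C^0}\le\|R_n\|_{C^1}\|\omega_{n+1}\|_{C^0}\le Kt^\sigma\|R_n\|_{C^1}\|R_n\|_{C^0}$, and $\|\Phi\|_{C^0}$ by Theorem \ref{th:3} using $\|S_tR_n\|_{C^{l+\sigma}}\le Kt^\sigma\|R_n\|_{C^l}$ and $\|\Psi_n+(R_n-S_tR_n)\|_{C^{1+a}}\le K\bigl(\|R_n\|_{C^2}^{1+\beta/2}+t^{-\ell+2}\|R_n\|_{C^\ell}\bigr)$, yields exactly the stated bound on $\|R_{n+1}\|_{C^0}$; inequality \eqref{for:10} comes from Lemma \ref{dlLO lemma} and Lemma \ref{le:5} applied to $f_{n+1}=\tilde H_{n+1}^{-1}\circ f_n\circ\tilde H_{n+1}$ with $\|\tilde H_{n+1}^{\pm1}-I\|_{C^r}\le K_r\|\omega_{n+1}\|_{C^r}\le K_rt^\sigma\|R_n\|_{C^r}$, after the analogous absorption of a $\tfrac14\|R_{n+1}\|_{C^r}$ term. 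I expect the main obstacle to be precisely the non-tameness of the $\Phi$-estimate from Theorem \ref{th:3}: $\|\Phi\|_{C^0}$ is controlled by $\|R_n\|_{C^{l+\sigma}}$ in a high norm, not merely by the almost-quadratically-small quantity $\|\Psi_n\|_{C^{1+a}}$, so one must keep both $t$ and $l$ free and track the interpolation exponents $\tfrac{l-2-N}{l+N}$, $\tfrac{2N+2}{l+N}$ carefully — this is what will make the scheme behave as if it were tame in the convergence argument of the next subsection.
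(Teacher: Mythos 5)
Your argument is essentially the paper's proof: upgrade $H_n$ to $C^{1+\beta}$ via Theorem \ref{HolderConjugacy}, bound the error $\Psi_n=\mathfrak{h}_n\circ f_n-\mathfrak{h}_n\circ A$ (the paper's $-\Theta_n$, Lemma \ref{le:7}) by $K\norm{R_n}_{C^2}^{1+\beta/2}$, apply Theorem \ref{th:3} to the smoothed equation, and obtain (i)--(iii) from the same three-term decomposition of $R_{n+1}$ with absorption of the $\omega_{n+1}\circ f_{n+1}-\omega_{n+1}\circ A$ term, together with Lemmas \ref{dlLO lemma} and \ref{le:5}. The only blemishes are a self-cancelling sign slip --- the first bracket in your displayed formula for $R_{n+1}$ should read $\omega_{n+1}\circ A-A\omega_{n+1}+R_n$, which is precisely the quantity equal to $\Phi+(R_n-S_tR_n)$ and the one you in fact substitute --- and the unnecessary mention of absorbing a $\tfrac14\norm{R_{n+1}}_{C^r}$ term in proving \eqref{for:10}, where estimating the composition $\tilde H_{n+1}^{-1}\circ f_n\circ\tilde H_{n+1}$ directly by Lemmas \ref{dlLO lemma} and \ref{le:5} (as you also indicate) suffices and avoids an absorption constant that would otherwise depend on $r$.
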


\begin{proof}
We denote $h_n=H_n-I$ and $R_n=f_n-A$ and, similarly to  \eqref{conj h}, we write the conjugacy
equation \eqref {for:31} as
\begin{align*}
  Ah_n-h_n\circ f_n=R_n
\end{align*}
We can assume that $c<\delta$, where $\delta=\delta(\beta)$ is from  Theorem \ref{HolderConjugacy},
and that $\|{H}_{n}-I\|_{C^0}\leq c$ yields that $H$ is the conjugacy close to the identity.
Then Theorem \ref{HolderConjugacy} gives the estimate
\begin{align}\label{for:32}
\norm{h_n}_{C^{1+\beta}}\leq K\norm{R_n}_{C^{1+\beta}}  .
\end{align}
We define
\begin{align}\label{for:34}
 \Omega_{n}=Ah_n-h_n\circ A, \quad\text{and}\quad  \Theta_{n}=  R_n-\Omega_{n} = h_n\circ A-h_n\circ f_n.
\end{align}

\vskip.2cm
\begin{lemma} \label{le:7}
$\|\Theta_n \|_{C^{1+\frac \beta 2}}\le K_A  \,\norm{R_n}^{1+\frac{\beta}{2}}_{C^{1+\beta}}$.
\end{lemma}

\begin{proof} We omit index $n$ in the proof of the lemma. We note that
$$\| R \|_{C^{1+ \beta}}=\| f-A \|_{C^{1+ \beta}}<c<1.$$
Differentiating at $x\in\TT^N$ we get
\begin{align*}
 D\Theta(x)&\overset{\text{*}}{=}Dh(Ax)\circ A-Dh(fx)\circ Df(x)\notag\\
 &=Dh(Ax)\circ A-Dh(fx)\circ A+Dh(fx)\circ (A-Df(x)),
\end{align*}
and hence
\begin{align*}
  \norm{D\Theta}_{C^0}&\leq  \|A\| \, \norm{Dh(Ax)-Dh(fx)}_{C^0}
  +\norm{Dh(fx)\circ DR(x)}_{C^0}\\
  &\leq \|A\| \, \norm{Dh}_{C^{\b}}\norm{R}^\b_{C^0}+\norm{Dh}_{C^{0}}\norm{DR}_{C^0}\\
  &\leq  \|A\| \,\norm{h}_{C^{1+\b}}\norm{R}^\b_{C^0}+\norm{h}_{C^{1}}\norm{R}_{C^1}.
\end{align*}
Since we also have $ \norm{\Theta}_{C^0} \leq \norm{h}_{C^1}\norm{R}_{C^0}$, we conclude
using \eqref{for:32} and $\| R \|_{C^{1+ \beta}}<1$ that
\begin{align} \label{C^1}
 \norm{\Theta}_{C^1}\leq \|A\| \, \norm{h}_{C^{1+\b}}\norm{R}^\b_{C^0}+\norm{h}_{C^{1}}\norm{R}_{C^1}
 \leq K \norm{R}_{C^{1+\beta}}^{1+\beta}.
\end{align}

To estimate the H\"older norm of $D\Theta$,  using equation $*$ for any $x,\,y\in\TT^N$ we have
\begin{align*}
  &D\Theta(x)-D\Theta(y)\\
  &=\big(Dh(Ax)-Dh(Ay)\big)\circ A+Dh(fx)\circ \big(Df(y)-Df(x)\big)\\
  &+\big(Dh(fy)-Dh(fx)\big)\circ Df(y),
\end{align*}
and hence
\begin{align*}
  &\norm{D\Theta(x)-D\Theta(y)}\\
  &\leq \|A\| \,\norm{Dh(Ax)-Dh(Ay)}+\norm{Dh(fx)}\norm{Df(y)-Df(x)}\\
  &+\norm{Dh(fy)-Dh(fx)}\norm{Df(y)}\\
  &\leq \|A\| \,\norm{Dh}_{C^\b}\norm{Ax-Ay}^\beta+\norm{h}_{C^1}\norm{Df}_{C^{{\b}}}\norm{y-x}^\b\\
  &+\norm{f}_{C^{1}}\norm{Dh}_{C^\b}\norm{fx-fy}^\beta\\
  &\leq \|A\|^{1+\b} \,\norm{h}_{C^{1+\b}}\norm{x-y}^\beta+\norm{h}_{C^1}\norm{f}_{C^{1+\b}}\norm{y-x}^\b\\
  &+\norm{f}_{C^{1}}\norm{h}_{C^{1+\b}}\norm{f}^\beta_{C^{1}}\norm{x-y}^\b.
\end{align*}
We conclude using \eqref{for:32} and $\| f-A \|_{C^{1+ \beta}}<1$ that
\begin{align} \label{C^1H}
  \norm{D\Theta}_{C^{0,\b}}\leq  \|A\|^{1+\b} \, \norm{h}_{C^{1+\b}}+\norm{h}_{C^1}\norm{f}_{C^{1+\b}}+\norm{h}_{C^{1+\b}}\norm{f}^{1+\b}_{C^{1}} \le K \norm{R}_{C^{1+\beta}}.
\end{align}
Therefore
\begin{align}\label{for:35}
 \norm{\Theta}_{C^{1+\beta}}\leq\norm{\Theta}_{C^1}+\norm{D\Theta}_{C^{0,\beta}}\le
 2K \norm{R}_{C^{1+\beta}}.
\end{align}
Finally, we complete the proof of the lemma using an interpolation inequality
\begin{align}\label{for:36}
  \norm{\Theta}_{C^{1+\frac{\beta}{2}}}&\leq K\norm{\Theta}_{C^1}^{\frac{1}{2}}\norm{\Theta}_{C^{1+\beta}}^{\frac{1}{2}}\leq K_A\norm{R}^{1+\frac{\beta}{2}}_{C^{1+\beta}}.
 \end{align}

\end{proof}

We recall that there exists a collection of smoothing operators $\mathfrak{s}_t$, $t>0$, such that for any $s\geq s_1\geq0$ and $s_2\geq0$, for any $g\in C^s(\TT^N)$ the following holds, see \cite{Damjanovic4} and \cite{Hamilton}:
\begin{align} \label{for:28}
 \norm{\mathfrak{s}_tg}_{C^{s+s_2}}\leq K_{s,s_2}\,t^{s_2}\,\norm{g}_{C^{s}},\quad \text{and} \quad
 \norm{(I-\mathfrak{s}_t)g}_{C^{s-s_1}}\leq K_{s,s'}\,t^{-s_1}\,\norm{g}_{C^{s}}.
\end{align}
We write \eqref{for:34} as
\begin{align}\label{for2}
Ah_n-h_n\circ A =\Omega_{n}=R_n-\Theta_{n}=[\mathfrak{s}_{t}R_n]+[(I-\mathfrak{s}_{t})R_n-\Theta_{n}]=:\mathcal R +\Psi
\end{align}
and apply Theorem \ref{th:3} to get the new splitting and obtain the estimates:
\begin{align}\label{for:6}
 \mathfrak{s}_{t}R_n=A\omega_{n+1}-\omega_{n+1}\circ A+\Phi_n
\end{align}
where $\omega_{n+1}$ and $\Phi_n$ are $C^\infty$ maps with the estimates:
\begin{align} 
 \norm{\omega_{n+1}}_{C^r}&\leq K_r\norm{\mathfrak{s}_{t}(R_n)}_{C^{r+\sigma}}\overset{(a)}{\leq} K_r\min\{t^\sigma\norm{R_n}_{C^{r}}, \norm{R_n}_{C^{r+\sigma}}\},\quad\text{and}\label{for:63}\\
 \norm{\Phi_{n}}_{C^0}&\leq K_{l}(\norm{(I-\mathfrak{s}_{t})R_n-\Theta_{n}}_{C^{1+\frac{\beta}{2}}})^{\frac{l-2-N}{l+N}}(\norm{\mathfrak{s}_{t}R_n}_{C^{l+\sigma}})^{\frac{2N+2}{l+N}}\notag\\
   &\overset{(b)}{\leq} K_{l}(\norm{(I-\mathfrak{s}_{t})R_n}_{C^{2}}+\norm{R_n}^{1+\frac{\beta}{2}}_{C^2})^{\frac{l-2-N}{l+N}}(\norm{\mathfrak{s}_{t}R_n}_{C^{l+\sigma}})^{\frac{2N+2}{l+N}}\notag\\
   &\overset{(a)}{\leq} K_{l,\ell}(t^{-\ell+2}\norm{R_n}_{C^\ell}+\norm{R_n}^{1+\frac{\beta}{2}}_{C^2})^{\frac{l-2-N}{l+N}}(t^\sigma\norm{R_n}_{C^{l}})^{\frac{2N+2}{l+N}}\label{for:50}
\end{align}
for any $r,\,\ell\geq0$ and any $l>N+2$. Here in $(a)$ we use \eqref{for:28} and in $(b)$ we use \eqref{for:36}.

From equation \eqref{for:6} we obtain a $C^r$ estimate for $\Phi_{n}$ with  $r\geq0$
\begin{align*}
 \norm{\Phi_{n}}_{C^r}&=\norm{A\omega_{n+1}-\omega_{n+1}\circ A-\mathfrak{s}_{t}R_n}_{C^r}\\
 &\leq K\norm{\omega_{n+1}}_{C^r}+\norm{\mathfrak{s}_{t}R_n}_{C^r}\overset{(1)}{\leq} K_r t^\sigma\norm{R_n}_{C^{r}}.
\end{align*}
Here in $(1)$ we use \eqref{for:28}  and \eqref{for:63}.

Let $\tilde{H}_{n+1}=I-\omega_{n+1}$. From \eqref{for:63} we can assume that  $\norm{\omega_{n+1}}_{C^1}<\min\{\frac{1}{2},d\}$ (see Lemma \ref{le:5}) if $c$ is sufficiently small.
Hence $\tilde{H}_{n+1}$ is invertible. We estimate the new error
\begin{align*}
 R_{n+1}=f_{n+1}-A
\end{align*}
 by using
\begin{gather*}
 f_{n+1}=\tilde{H}_{n+1}^{-1}\circ f_n\circ \tilde{H}_{n+1}\Rightarrow \tilde{H}_{n+1}\circ f_{n+1}=f_n\circ \tilde{H}_{n+1}\\
 \Rightarrow (I-\omega_{n+1})\circ f_{n+1}=f_n\circ \tilde{H}_{n+1}\\
 \Rightarrow f_{n+1}=\omega_{n+1}\circ f_{n+1}+f_n\circ \tilde{H}_{n+1}.
\end{gather*}
This gives
\begin{align*}
 R_{n+1}&=\omega_{n+1}\circ f_{n+1}+f_n\circ \tilde{H}_{n+1}-A\\
 &=\omega_{n+1}\circ f_{n+1}+(R_n+A)\circ (I-\omega_{n+1})-A\\
 &=\omega_{n+1}\circ f_{n+1}+R_n\circ (I-\omega_{n+1})-A\circ \omega_{n+1}.
\end{align*}
Hence we see that $R_{n+1}$ has three parts:
\begin{align*}
 R_{n+1}&=\underbrace {\big( {\omega_{n+1}\circ f_{n+1}-\omega_{n+1}\circ A} \big)}_{\mathcal{E}_1}+\underbrace {\big( {R_n\circ (I-\omega_{n+1})-R_n} \big)}_{\mathcal{E}_2}\\
 &+\underbrace {\big( \omega_{n+1}\circ A-{A\circ \omega_{n+1}+R_n} \big)}_{\mathcal{E}_3}.
\end{align*}
We note that
\begin{align*}
 \norm{\mathcal{E}_1}_{C^0}&\leq \norm{\omega_{n+1}}_{C^1}\norm{f_{n+1}-A}_{C^0}\overset{(0)}{\leq} \text{\small$\frac{1}{2}$}\norm{R_{n+1}}_{C^0},\\
 \norm{\mathcal{E}_2}_{C^0}&\leq K\norm{R_n}_{C^1}\norm{\omega_{n+1}}_{C^0}\overset{(1)}{\leq} Kt^\sigma\norm{R_n}_{C^1}\norm{R_n}_{C^{0}};
\end{align*}
and
\begin{align*}
 \norm{\mathcal{E}_3}_{C^0}&=\norm{\Phi_n+(I-\mathfrak{s}_{t})R_n}_{C^0}\leq \norm{\Phi_n}_{C^0}+\norm{(I-\mathfrak{s}_{t})R_n}_{C^0}\\
 &\overset{(2)}{\leq} \norm{\Phi_n}_{C^0}+K_\ell t^{-\ell}\norm{R_n}_{C^\ell}
\end{align*}
for any $\ell\geq0$. Here in $(0)$ we recall that $\norm{\omega_{n+1}}_{C^1}<\frac{1}{2}$; in $(1)$ we use \eqref{for:63}; and in $(2)$ we use \eqref{for:28}.

Hence it follows that
\begin{align*}
 \norm{R_{n+1}}_{C^0}&\leq \norm{\mathcal{E}_1}_{C^0}+\norm{\mathcal{E}_2}_{C^0}+\norm{\mathcal{E}_3}_{C^0}\leq \text{\small$\frac{1}{2}$}\norm{R_{n+1}}_{C^0}+\norm{\mathcal{E}_2}_{C^0}+\norm{\mathcal{E}_3}_{C^0},
\end{align*}
which gives
\begin{align*}
 \norm{R_{n+1}}_{C^0}&\leq 2\norm{\mathcal{E}_2}_{C^0}+2\norm{\mathcal{E}_3}_{C^0}\\
 &\leq  Kt^\sigma\norm{R_n}_{C^1}\norm{R_n}_{C^{0}}+K_rt^{-\ell}\norm{R_n}_{C^\ell}+\norm{\Phi_n}_{C^0}\\
 &\overset{(3)}{\leq} Kt^\sigma\norm{R_n}_{C^1}\norm{R_n}_{C^{0}}+K_\ell t^{-\ell}\norm{R_n}_{C^\ell}\\
 &+K_{l,\ell}(t^{-\ell+2}\norm{R_n}_{C^\ell}+\norm{R_n}^{1+\frac{\beta}{2}}_{C^2})^{\frac{l-2-N}{l+N}}(t^\sigma\norm{R_n}_{C^{l}})^{\frac{2N+2}{l+N}}\notag
\end{align*}
for any $l>N+2$.  Here in $(3)$ we use \eqref{for:50}.
\vskip.2cm

Now we estimate  $\norm{R_{n+1}}_{C^r}$. We note that
\begin{align*}
 R_{n+1}=(I-\omega_{n+1})^{-1}\circ (R_n+A)\circ (I-\omega_{n+1})-A=(I-\omega_{n+1})^{-1}\circ P-A.
\end{align*}
By Lemma \ref{dlLO lemma} we have
\begin{align*}
 \norm{P}_{C^r}&\leq M_r \left(1+ \| I-\omega_{n+1}\|_{C^1}^{r-1} \right)\\
  &\cdot\left( \|R_n+A\|_{C^1}\| I-\omega_{n+1}\|_{C^r} + \|R_n+A\|_{C^r}\| I-\omega_{n+1}\|_{C^1} \right) +  \|R_n+A\|_{C^0}\\
  &\overset{(1)}{\leq} K_rt^\sigma\norm{R_n}_{C^{r}}+K_r,\qquad\text{and}\qquad
  \norm{P}_{C^1}\overset{(1)}{\leq}  K.
\end{align*}
Here in $(1)$ we use the fact that $\omega_{n+1}$ satisfies the estimates  $\norm{\omega_{n+1}}_{C^r}\leq K_rt^\sigma\norm{R_n}_{C^{r}}$ (see \eqref{for:63}) and $\norm{\omega_{n+1}}_{C^1}<\frac{1}{2}$. Using Lemma \ref{le:5} this also implies that
\begin{align*}
 \norm{(I-\omega_{n+1})^{-1}}_{C^r}\leq K_r\norm{\omega_{n+1}}_{C^{r}}\leq K_{r,1}t^\sigma\norm{R_n}_{C^{r}}
\end{align*}
 and $\norm{(I-\omega_{n+1})^{-1}}_{C^1}<2$.

As a direct consequence of Lemma \ref{dlLO lemma} and the above discussion we have
\begin{align*}
 \norm{R_{n+1}}_{C^r}&\leq M_r \left(1+ \| P\|_{C^1}^{r-1} \right)
  \left( \|(I-\omega_{n+1})^{-1}\|_{C^1}\| P\|_{C^r} + \|(I-\omega_{n+1})^{-1}\|_{C^r}\| P\|_{C^1} \right)+K\\
  &\leq K_r\| P\|_{C^r}+K_rt^\sigma\norm{R_n}_{C^{r}}+K\\
  &\leq K_{r,1}t^\sigma\norm{R_n}_{C^{r}}+K_{r,1}.
\end{align*}
 To get \eqref{for:24} we have
 \begin{align*}
    \norm{H_{n+1}-I}_{C^0}&=\norm{H_{n}\circ (I-\omega_{n+1})-I}_{C^0}\leq \norm{H_{n}\circ (I-\omega_{n+1})-H_n}_{C^0}+\norm{H_{n}-I}_{C^0}\\
    &\leq \norm{H_n}_{C^1}\norm{\omega_{n+1}}_{C^0}+\norm{H_{n}-I}_{C^0}\\
    &\overset{(1)}{\leq} K\norm{R_n}_{C^\sigma}+\norm{H_{n}-I}_{C^0}.
  \end{align*}
Here in $(1)$ we use \eqref{for:32} and \eqref{for:63}.
\end{proof}


\subsection{The iteration scheme}
First we note that by \cite[Theorem 6.1]{L1} there exists $\sigma_0=\sigma_0(A)\in \N$
such that if $H$ and $H^{-1}$ are $C^{\sigma_0}$ then $H$ and $H^{-1}$ are $C^{\infty}$.

To set up the iterative process we take $\ell$ sufficiently large so that the following holds
\begin{equation} \label{for:78}
\begin{aligned}
&\ell\geq \max\left\{ {\text {\small $\frac{3\sigma+10}{1-\frac{\beta}{3}},\,\,\frac{24\sigma}{\beta},\,\,2(5\max\{\sigma_0,\sigma\}+1)$}},\,\,2(2\sigma+5)\right\}, \\
&{\text {\small $\left(1+\frac{\beta}{2}\right) \left(1-\frac{5}{\ell}\right) \left(\frac{\ell-2-N}{\ell+N}\right)-2\frac{2N+2}{\ell+N}$}}\,\geq\, {\text {\small $1+\frac{\beta}{3}$}}.
\end{aligned}
\end{equation}
Now we construct $R_{n}$, $f_{n}$, $\omega_n$ and $H_{n}$ inductively as follows. For $n=0$ we take
\begin{align*}
 f_0=f,\quad H_0=H,\quad R_0=f-A,\quad \omega_0=0, \quad \text {and define } \;\epsilon_n=\epsilon^{\gamma^n}
\end{align*}
where $\gamma=1+\frac{\beta}{4}$ and $\epsilon>0$ is sufficiently small so that the following holds
\begin{align*}
 \norm{R_0}_{C^0} \leq \epsilon_0=\e ,\qquad \norm{R_0}_{C^\ell}\leq \epsilon_0^{-1},\qquad  \quad \|H_0-I\|_{C^0}<\epsilon_0^{\frac{1}{2}}.
\end{align*}
We note that $H_0\in C^1(\TT^N)$ by Theorem \ref{HolderConjugacy}. Now we assume inductively
that $H_n\in C^1(\TT^N)$ satisfies the conjugacy equation
\begin{align*}
  H_n\circ f_n=A\circ H_n
\end{align*}
and that $H_n$ and $R_n=f_n-A$ satisfy
\begin{align*}
 \norm{R_n}_{C^0}\leq \epsilon_n,\qquad \norm{R_n}_{C^\ell}\leq \epsilon_n^{-1},
\quad \|H_n-I\|_{C^0}<\sum_{i=0}^{n-1}\epsilon^{\frac{1}{2}}_i .
\end{align*}
By interpolation inequalities we have
\begin{align}
\norm{R_n}_{C^{\sigma+2}}\leq K_\ell\norm{R_n}^{\text{\tiny$\frac{\ell-2-\sigma}{\ell}$}}_{C^0}\norm{R_n}^{\text{\tiny$\frac{2+\sigma}{\ell}$}}_{C^{\ell}}<\epsilon^{1-\frac{5+2\sigma}{\ell}}_n\leq \epsilon^{\frac{1}{2}}_n. \label{for:8}
\end{align}
provided $\ell\geq2(2\sigma+5)$.  Here, and subsequently, we estimate various constants
from above by $\e_n^{-\frac 1 \ell}$. This can be done since $\ell$ is fixed, we can take $\epsilon$ small enough.
We also have
\begin{align}\label{for:25}
 \|H_n-I\|_{C^0}<\sum_{i=0}^{n-1}\epsilon^{\frac{1}{2}}_i<\sum_{i=1}^{\infty}(\epsilon^{\frac{1}{4}})^i<2\epsilon^{\frac{1}{4}}.
\end{align}
Then \eqref{for:8} and \eqref{for:25} allow us to use Proposition \ref{po:1} to obtain
the new iterates $R_{n+1}$, $f_{n+1}$, $\omega_{n+1}$ and $H_{n+1}$.
Now we show that these iterates satisfy the inductive assumption and establish appropriate convergence.

\subsection{Inductive estimates and convergence} $\;$

\noindent We use Proposition \ref{po:1} with $t_n=\epsilon_n^{-\frac{3}{\ell}}$ and $l=\ell$.
\vskip.15cm

$(1)\,$ $C^\ell$ \emph{estimate for} $R_{n+1}$ \begin{align*}
 \norm{R_{n+1}}_{C^\ell}&\leq K_\ell t_n^\sigma\norm{R_n}_{C^{\ell}}+K_\ell\leq K_\ell \epsilon_n^{-\frac{3\sigma}{\ell}}(\epsilon_n^{-1}+1)\\
 &<\epsilon_n^{-1-\frac{\beta}{8}-\frac{3\sigma}{\ell}}\leq \epsilon_n^{-1-\frac{\beta}{4}}=\epsilon_{n+1}^{-1},
\end{align*}
provided $\ell\geq\frac{24\sigma}{\beta}$.

\vskip.15cm
\smallskip
$(2)\,$ $C^0$ \emph{estimate for} $R_{n+1}$
\begin{align*}
    \norm{R_{n+1}}_{C^0}&\leq Kt_n^\sigma\norm{R_n}_{C^2}^2+K_\ell t_n^{-\ell}\norm{R_n}_{C^\ell}\\
    &+K_{\ell}(t_n^{-\ell+2}\norm{R_n}_{C^\ell}+\norm{R_n}^{1+\frac{\beta}{2}}_{C^2})^{\frac{\ell-2-N}{\ell+N}}(t_n^\sigma\norm{R_n}_{C^{\ell}})^{\frac{2N+2}{\ell+N}}\notag\\
   &\overset{\text{(a)}}{\leq} K\epsilon_n^{\text{\tiny$2-\frac{3\sigma+10}{\ell}$}}+K_\ell\epsilon_n^{3}\epsilon_n^{-1}\\
    &+K_\ell(\epsilon_n^{\frac{3(\ell-2)}{\ell}}
    \epsilon_n^{-1}+\epsilon_n^{\text{\tiny$(1+\frac{\beta}{2})(1-\frac{5}{\ell})$}})^{\frac{\ell-2-N}{\ell+N}}(\epsilon_n^{-\frac{3\sigma}{\ell}}
    \epsilon_n^{-1})^{\frac{2N+2}{\ell+N}}\\
   &\overset{\text{(b)}}{\leq} K\epsilon_n^{\text{\tiny$2-\frac{3\sigma+10}{\ell}$}}+K_\ell\epsilon_n^{2}\\
    &+2K_\ell(\epsilon_n^{\text{\tiny$(1+\frac{\beta}{2})(1-\frac{5}{\ell})$}})^{\frac{\ell-2-N}{\ell+N}}(
    \epsilon_n^{-2})^{\frac{2N+2}{\ell+N}}\\
    &\overset{\text{(c)}}{<} \epsilon_n^{\gamma}=\epsilon_{n+1}.
   \end{align*}
  Here in $(a)$ we use interpolation inequalities:
  \begin{align}\label{for:68}
\norm{R_n}_{C^{2}}\leq C \norm{R_n}^{\text{\tiny$\frac{\ell-2}{\ell}$}}_{C^0}\norm{R_n}^{\text{\tiny$\frac{2}{\ell}$}}_{C^{\ell}}
<\epsilon_n^{\text{\tiny$1-\frac{5}{\ell}$}};
\end{align}
in $(b)$  we note that
\begin{align*}
  \text{\small$(1+\frac{\beta}{2})(1-\frac{5}{\ell})$}<\text{\small$2(1-\frac{5}{\ell})$}<2-\frac{6}{\ell}\quad\text{and}\quad\frac{3\sigma}{\ell}<1.
\end{align*}
Then $\epsilon_n^{-\frac{3\sigma}{\ell}}
    \epsilon_n^{-1}<\epsilon_n^{-2}$ and
\begin{align*}
 \max\{\epsilon_n^{\text{\tiny$(1+\frac{\beta}{2})(1-\frac{5}{\ell})$}}, \epsilon_n^{\frac{3(\ell-2)}{\ell}}
    \epsilon_n^{-1}\}= \epsilon_n^{\text{\tiny$(1+\frac{\beta}{2})(1-\frac{5}{\ell})$}};
\end{align*}
in $(c)$ we use
\begin{gather*}
 \epsilon_n^{\text{\tiny$2-\frac{3\sigma+10}{\ell}$}}<\epsilon_n^{1+\text{\tiny$\frac{\beta}{3}$}},\quad (\epsilon_n^{\text{\tiny$(1+\frac{\beta}{2})(1-\frac{5}{\ell})$}})^{\frac{\ell-2-N}{\ell+N}}(
    \epsilon_n^{-2})^{\frac{2N+2}{\ell+N}}<\epsilon_n^{1+\text{\tiny$\frac{\beta}{3}$}},
    \end{gather*}
provided
\vskip.15cm
$\hskip2cm  2-\frac{3\sigma+10}{\ell}\geq 1+\frac{\beta}{3},\qquad (1+\frac{\beta}{2})(1-\frac{5}{\ell})(\frac{\ell-2-N}{\ell+N})-2\frac{2N+2}{\ell+N}\geq 1+\frac{\beta}{3}$.

\vskip.15cm \noindent
By \eqref{for:78} and the assumption all inequalities above are satisfied.

\vskip.2cm

$(3)$\, $C^{\sigma_0}$ \emph{estimate for} $\omega_{n+1}$: By interpolation inequalities we have
\begin{align*}
\norm{R_{n}}_{C^{\sigma_0}}\leq K_\ell\norm{R_{n}}^{\text{\tiny$\frac{\ell-\sigma_0}{\ell}$}}_{C^0}\norm{R_{n}}^{\text{\tiny$\frac{\sigma_0}{\ell}$}}_{C^{\ell}}<\epsilon_n^{\text{\tiny$1-\frac{2\sigma_0+1}{\ell}$}}.
\end{align*}
Hence we have
\begin{align}\label{for:7}
 \norm{\omega_{n+1}}_{C^{\sigma_0}}\leq Kt_n^\sigma\norm{R_n}_{C^{\sigma_0}}\leq K\epsilon_n^{-\frac{3\sigma}{\ell}}\epsilon_n^{\text{\tiny$1-\frac{2\sigma_0+1}{\ell}$}}
 <\epsilon_{n}^{\frac{1}{2}},\end{align}
provided
\begin{align*}
 -\frac{3\sigma}{\ell}+1-\frac{2\sigma_0+1}{\ell}>\frac{1}{2},
\end{align*}
which is satisfied for $\ell>2(5\max\{\sigma_0,\sigma\}+1)$.

\vskip.2cm

$(4)$\, $C^{0}$ \emph{estimate for} $H_{n+1}$: By \eqref{for:8} we have
\begin{align*}
    \norm{H_{n+1}-I}&_{C^0}\leq K\norm{R_n}_{C^\sigma}+\norm{H_{n}-I}_{C^0}<K\epsilon^{1-\frac{5+2\sigma}{\ell}}_n+\sum_{i=0}^{n-1}\epsilon^{\frac{1}{2}}_n\leq \epsilon^{\frac{1}{2}}_n+\sum_{i=0}^{n-1}\epsilon^{\frac{1}{2}}_i=\sum_{i=0}^{n}\epsilon^{\frac{1}{2}}_i
  \end{align*}

\medskip
Consequently, we have
\begin{align*}
 f_{n+1}&=\tilde{H}_{n+1}^{-1}\circ\tilde{H}_{n}^{-1}\circ\cdots\circ \tilde{H}_{1}^{-1}\circ f\circ \tilde{H}_{1}\circ \cdots\circ \tilde{H}_{n+1}\\
 &=\mathfrak{L}_{n+1}^{-1}\circ f\circ\mathfrak{L}_{n+1}
\end{align*}
where $\tilde{H}_{i}=I-\omega_{i}$, $1\leq i\leq n+1$; and $\mathfrak{L}_{n+1}=\tilde{H}_{1}\circ \cdots\circ \tilde{H}_{n+1}$.

Finally,  \eqref{for:7} implies that $\mathfrak{L}_n$ converges in $C^{\sigma_0}$ topology to a $C^{\sigma_0}$  diffeomorphism $H$, which is a conjugacy between $f$ and $A$. By \cite[Theorem 6.3]{L1} and the choice of ${\sigma_0}$ we conclude that $H$ is a $C^{\infty}$ diffeomorphism.


\end{document}